\newtheorem{thm}{Theorem}[section]
\newtheorem{lem}{Lemma}[section]
\newtheorem{prop}{Proposition}[section]
\theoremstyle{definition}
\newtheorem{defn}{Definition}[section]
\theoremstyle{remark}
\newtheorem{rem}{Remark}[section]
\numberwithin{equation}{section}
\numberwithin{equation}{section}
\newcommand{\R}{\mathbb{R}} \newcommand{\mathR}{\mathbb{R}}
\newcommand{\Rn}{{\mathR^n}}
\newcommand{\scrD}{\mathscr{D}}
\newcommand{\calF}{\mathcal{F}}
\newcommand{\scrS}{\mathscr{S}}
\newcommand{\calC}{\mathcal{C}}
\newcommand{\supp}{\mathop{\rm supp}}
\newcommand{\dist}{\mathop{\rm dist}}
\newcommand{\diam}{\mathop{\rm diam}}
\newcommand{\ass}{\textrm{~a.s.~}}
\newcommand{\dif}[1]{\,\mathrm{d}{#1}} 
\newcommand{\nrm}[2][]{ \| {#2} \|_{#1}}
\newcommand{\agl}[1][\cdot]{ \langle {#1} \rangle}
\newcommand{\Rk}{{\mathcal{R}_{k}}}
\newcounter{saveeqn}
\title[Recent progress of single-realization]{On recent progress of single-realization recoveries of random Schr\"odinger systems}
\author{Shiqi Ma}
\address{Department of Mathematics and Statistics, University of Jyv\"askyl\"a, Finland}
\email{mashiqi01@gmail.com, shiqi.s.ma@jyu.fi}
\begin{document}

\begin{abstract}
	
We consider the recovery of some statistical quantities by using the near-field or far-field data in quantum scattering generated under a single realization of the randomness. We survey the recent main progress in the literature and point out the similarity among the existing results. The methodologies in the reformulation of the forward problems are also investigated. We consider two separate cases of using the near-field and far-field data, and discuss the key ideas of obtaining some crucial asymptotic estimates. We pay special attention on the use of the theory of pseudodifferential operators and microlocal analysis needed in the proofs.

	\medskip

	\noindent{\bf Keywords:}~~
	inverse scattering, random source and medium, ergodicity, pseudodifferential operators, microlocal analysis
	
	{\noindent{\bf 2010 Mathematics Subject Classification:}~~35Q60, 35J05, 31B10, 35R30, 78A40}
	
\end{abstract}

\maketitle

\section{Introduction} \label{sec:intro-RSRe2020}

\subsection{Mathematical formulations} \label{subsec:formu-RSRe2020}

In this paper, we mainly focus on the random inverse problems associated with the following time-harmonic Schr\"odinger system
\begin{equation} \label{eq:1a-RSRe2020}
	\displaystyle{ (-\Delta - E + \text{potential})\, u(x) = \text{source}, \quad x \in \Rn, }
\end{equation}
where $E$ is the energy level, $n$ is the dimension, and the ``source'' and the ``potential'' in 
\eqref{eq:1a-RSRe2020}
shall be specified later in this paper.
In some cases we may impose incident waves to the system to obtain more useful information, thus
\begin{equation} \label{eq:1b-RSRe2020}
	\displaystyle{ u(x) = \alpha \cdot u^{in}(x) + u^{sc}(x) }
\end{equation}
where $\alpha$ takes the value of either $0$ or $1$ corresponding to impose or suppress the incident wave, respectively.
The corresponding data are thus called passive or active measurements, respectively.
Moreover, we shall impose the Sommerfeld radiation condition \cite{colton2012inverse}
\begin{equation} \label{eq:1c-RSRe2020}
	\displaystyle{ \lim_{r\rightarrow\infty} r\left(\frac{\partial u^{sc}}{\partial r}-i\sqrt{E} u^{sc} \right)=0,\quad r:=|x|, }
\end{equation}
that characterizes the outgoing nature of the scattered field $u^{sc}$. 
The system \eqref{eq:1a-RSRe2020}-\eqref{eq:1c-RSRe2020} describes the quantum scattering \cite{eskin2011lectures,griffiths2016introduction} associated with a source and a potential at the energy level $E$.
Later we following the convention to use $k := \sqrt{E}$ to signify the frequency at which the system in acting on.

Under different assumptions of the potential and source, of the dimension, and of the incident wave, the regularity of the Schr\"odinger system \eqref{eq:1a-RSRe2020}-\eqref{eq:1c-RSRe2020} behaves differently and calls for different techniques for recovery procedure.
The randomness of the Schr\"odinger system \eqref{eq:1a-RSRe2020}-\eqref{eq:1c-RSRe2020} can present either in the potential, in the source, or in both the potential and source.
In this paper we shall investigate all of these three cases, consider the results in the literature and give details of part of these results.

There are rich literature on the inverse scattering problem using either passive and active measurements as data.
Fox a fixed potential, the recovery of the deterministic unknown source of the system is called the inverse source problem.
For the theoretical analysis and computational methods of the inverse source problems, readers may refer to \cite{bao2010multi, Bsource, BL2018, ClaKli, WangGuo17, Zhang2015} and references therein as references.
The simultaneous recovery of deterministic unknown source and potential are also studied in the literature.
In \cite{KM1, liu2015determining}, the authors considered simultaneous recovery of an unknown source and its surrounding medium parameter. 
This type of inverse problems also arise in the deterministic magnetic anomaly detections using geomagnetic monitoring \cite{Deng2019onident, Deng2020onident} with passive measurements. 
While \cite{Deng2019onident, Deng2020onident, KM1, liu2015determining} focus on deterministic setting with passive measurements, the works \cite{bao2016inverse, Lu1, Yuan1, LiHelinLiinverse2018, LassasA, Lassas2008, Blomgren2002, Borcea2006} pay attention to random settings.
We are particularly interested in the case with a \emph{single} realization of the random sample.
The single-realization recovery has been studied in the literature.
In this paper we mainly focus on \cite{LassasA, Lassas2008, caro2016inverse, LiHelinLiinverse2018, li2019inverse, li2020inverse, llm2018random,
ma2020determining, ma2019determining}, and we shall give these works detailed introductions as follows.

In \cite{LassasA, Lassas2008}, Lassas et.~al.~considered the inverse scattering problem for the two-dimensional random Schr\"odinger system
$(-\Delta - k^2 - q(x,\omega)) u(x,k,\omega) = \delta_y, \ x \in \R^2$ which is incited by point sources $u^{in}(x) = \frac {\mathrm{i}} {4} H_0^{(1)}(k|x-y|)$;
the $H_0^{(1)}$ is the Hankel function for the first kind, and the origin $y$ of this source are located outside the support of the potential.
The potential $q(x,\omega)$ is a micro-locally isotropic generalized Gaussian field (\emph{migr} field) with compact support.
The definition of the migr field can be found in Definition \ref{defn:migr-RSRe2020}.
They introduced the so-called rough strength $\mu(x)$, which is the informative part of the principal symbol $\mu(x) |\xi|^{-m}$ of the covariance operator.
The $-m$ in $\mu(x) |\xi|^{-m}$ is the rough order of the random potential.
The main result in their work states that under a single measurement of the random field inside a measurement domain, the rough strength can be recovered.

In 2019, Caro et.~al.~\cite{caro2016inverse} considered an inverse scattering problem for an $n$-dimensional ($n \geq 2$) random Schr\"odinger system
$(-\Delta - k^2 - q(x,\omega)) u(x,k,\omega) = 0, \ x \in \Rn$ with incident wave being the plane wave, i.e.~$u$ is incited by the point sources $u^{in}(x) = e^{ikd \cdot s}$;
$d$ is the incident direction.
Again, the potential $q$ is assumed to be a migr field with compact support.
The main result of the paper is as follows: 
they used the backscattering far-field pattern and recovered the rough strength $\mu(x)$ almost surely, under a single realization of the randomness.

In \cite{LiHelinLiinverse2018}, Li, et.~al.~studied the case where the potential is zero and the source is migr field.
In \cite{li2019inverse} Li, et.~al.~studied the same setting but with the energy level $E$ replaced by $(k^2 + i\sigma k)$ where the $\sigma$ is the attenuation.
The random source term considered is constructed as a migr field.
The system has been changed to Helmholtz system in \cite{li2019inverse} but the underlying mathematical equation is uniform with the Schr\"odinger's equation.
The authors studied the regularity of the random source and gave the well-posedness  of the direct problem.
Then they represented the solution of the system as the convolution between the fundamental solution and the random source.
By truncating the fundamental solution, they indicated that the rough strength can be recovered by utilizing the correspondingly truncated solution.
Further, the authors used calculus of symbols to recover the rough strength.

Then in \cite{li2020inverse}, Li, et.~al.~further extended their study to Maxwell's equation.
The recovery procedure in these three works share the same idea--the leading order term in the Bonn expansion gives the recovery of the desired statistics while these higher order terms converge to zero.
The proof of these converges involve the utilization of Fourier integral operator.
We shall give an detailed explanation in Section \ref{sec:Matti-RSRe2020} of this technique.

In \cite{llm2018random}, the authors consider direct and inverse scattering for \eqref{eq:1a-RSRe2020}--\eqref{eq:1c-RSRe2020}
with a deterministic potential and a random source. 
The random source is a generalized Gaussian random field with local mean value function and local variance function, which are assumed to be bounded and compactly supported.
The well-posedness of the direct scattering of the system has been formulated in weighted $L^2$ space.
Then inverse scattering is studied and recovery formula of the variance function is obtained, and the uniqueness recovery of the potential is given.
The authors used both passive and active measurement to recover to unknowns.
Passive measurements refers to the scattering data generated only by the unknown source ($\alpha$ is set to be 0 in \eqref{eq:1b-RSRe2020});
active measurements refers to the scattering data generated by both the source and the incident wave ($\alpha$ is set to be 1 in \eqref{eq:1b-RSRe2020}).
To recover the variance function, only the passive measurements is needed, while the unique recovery of the potential needs active measurements.

In \cite{ma2020determining}, the authors extended the work \cite{llm2018random} to the case where the source is a migr field.
The direct scattering problem is formulated in a similar manner as in \cite{llm2018random}, while the technique used in the inverse scattering problem differs from that of \cite{llm2018random}.
In order to analyze the asymptotics of higher order terms in the Bonn expansion corresponding to the migr fields, stationary phase lemma and pseudodifferential operator are utilized.

Then the authors extended the work \cite{ma2020determining} to the case where both the potential and the source are random (of migr type), and the extended result is presented in \cite{ma2019determining}.
The results between \cite{llm2018random} and \cite{ma2020determining,  ma2019determining} have two major differences.
First, in \cite{llm2018random} the random part of the source is assumed to be a Gaussian white noise, 
while in \cite{ma2019determining} the potential and the source are assumed to be migr fields. The migr field can fit larger range of randomness 
by tuning its rough order and rough strength.
Second, in \cite{ma2019determining} both the source and potential are random, while in \cite{ma2020determining} the potential is assumed to be deterministic.
These two facts make \cite{ma2019determining} much more challenging than that in \cite{ma2020determining}. 
The techniques used in the estimates of higher order terms in \cite{ma2019determining} are pseudodifferential operators and microlocal analysis
and we shall give a detailed treatment in Section \ref{sec:Ma-RSRe2020}.

Although the techniques used in \cite{llm2018random, ma2020determining, ma2019determining} are different, the recovery formulae fall into the same pattern.
The thesis \cite{PhDThesisShiqiMa} partially collected these three works\cite{llm2018random,  ma2020determining, ma2019determining} and readers may refer to the thesis for a more coherent discussion on this topic.

\subsection{Summarization of the main results} \label{subsec:MainRst-RSRe2020}

In this paper we mainly pay attention to two types of random model, the Gaussian white noise and the migr field.
The Gaussian white noise is well-known and readers may refer to Section 2.1 in  \cite{llm2018random} for more details.
Here we give a brief introduction to the migr field.
We assume $f$ to be a generalized Gaussian random distribution of the microlocally isotropic type (cf.~Definition \ref{defn:migr-RSRe2020}).
It means that $f(\cdot,\omega)$ is a random distribution and the mapping 
	\begin{equation*}
	\omega \in \Omega \ \mapsto \ \agl[f(\cdot,\omega),\varphi] \in \mathbb C,\quad \varphi\in\scrS(\Rn),
	\end{equation*}
	is a Gaussian random variable whose probabilistic measure depends on the test function $\varphi$. Here and also in what follows, $\scrS(\Rn)$ stands for the Schwartz space. Since both $\agl[f(\cdot,\omega),\varphi]$ and $\agl[f(\cdot,\omega),\psi]$ are random variables for $\varphi$, $\psi \in \scrS(\Rn)$,
	from a statistical point of view, the covariance between these two random variables,
	\begin{equation} \label{eq:CovDef-MLmGWsSchroEqu2019}
	\mathbb E_\omega \big( \agl[\overline{f(\cdot,\omega) - \mathbb{E} (f(\cdot,\omega))},\varphi] \agl[f(\cdot,\omega) - \mathbb{E} (f(\cdot,\omega)),\psi] \big),
	\end{equation}
	can be understood as the covariance of $f$.
	Here $\mathbb E_\omega$ means to take expectation on the argument $\omega$. 
	Formula \eqref{eq:CovDef-MLmGWsSchroEqu2019} defines an operator $\mathfrak C_f$,
	\[
	\mathfrak C_f \colon \varphi \in \scrS(\Rn) \ \mapsto \ \mathfrak C_f (\varphi) \in \scrS'(\Rn),
	\]
	in a way that
	\(\mathfrak C_f (\varphi) \colon \psi \in \scrS(\Rn) \ \mapsto \ (\mathfrak C_f (\varphi))(\psi) \in \mathbb C\)
	where
	\[
	(\mathfrak C_f (\varphi))(\psi) := \mathbb E_\omega \big( \agl[\overline{f(\cdot,\omega) - \mathbb{E} (f(\cdot,\omega))},\varphi] \agl[f(\cdot,\omega) - \mathbb{E} (f(\cdot,\omega)),\psi] \big).
	\]
	The operator $\mathfrak C_f$ is called the covariance operator of $f$.
\begin{defn}[Migr field] \label{defn:migr-RSRe2020}
A generalized Gaussian random distribution $f$ on $\Rn$ is called microlocally isotropic with \emph{rough order} $-m$ and \emph{rough strength} $\mu(x)$ in a bounded domain $D$, if the following conditions hold:
\begin{enumerate}
	\item the expectation $\mathbb E (f)$ is in $\mathcal{C}_c^\infty(\Rn)$ with $\supp \mathbb E (f) \subset D$;
	
	\item $f$ is supported in $D$ a.s. (namely, almost surely);
	
	\item the covariance operator $\mathfrak C_f$ is a classical pseudodifferential operator of order $-m$;
	
	\item $\mathfrak C_f$ has a principal symbol of the form $\mu(x)|\xi|^{-m}$ with $\mu \in \mathcal{C}_c^\infty(\Rn;\R)$, $\supp \mu \subset D$ and $\mu(x) \geq 0$ for all $x \in \Rn$.
\end{enumerate}
We call a microlocally isotropic Gaussian random distribution as an \emph{migr} field.
\end{defn}

For the case where both the source and the potential are deterministic and $L^\infty$ functions with compact supports, the well-posedness of the direct problem of system \eqref{eq:1a-RSRe2020}--\eqref{eq:1c-RSRe2020} is known; see, e.g., \cite{colton2012inverse, eskin2011lectures, mclean2000strongly}.
Moreover, there holds the following asymptotic expansion of the outgoing radiating field $u^{sc}$ as $|x| \to +\infty$,
\begin{equation*} 
u^{sc}(x) = \frac{e^{\mathrm{i}k |x|}}{|x|^{(n-1)/2}} u^\infty(\hat x, k, d) + o(|x|^{-(n-1)/2}), \quad x \in \Rn.
\end{equation*}
$u^\infty(\hat x, k, d)$ is referred to as the far-field pattern, which encodes information of the potential and the source.
$\hat x:=x/|x|$ and $d$ in $u^\infty(\hat x, k, d)$ are unit vectors and they respectively stand for the observation direction and the impinging direction of the incident wave.
When $d = -\hat x$, $u^\infty(\hat x, k, -\hat x)$ is called the backscattering far-field pattern.
We shall see very soon that both the near-field $u^{sc}$ and the far-field $u^\infty$ can be used to achieve the recovery.

In \eqref{eq:1a-RSRe2020}, let us denote the source term as $f$ and the potential term as $q$.
In our study, both the source $f$ and the potential $q$ are assumed to be compactly supported. 
We shall treat \cite{LassasA, Lassas2008, caro2016inverse, LiHelinLiinverse2018, li2019inverse, li2020inverse, llm2018random,
	ma2020determining, ma2019determining}
in more details.
In \cite{LassasA, Lassas2008, caro2016inverse},
$q$ is assumed to be a migr field while $f$ is either zero or point a point source, i.e.~$\delta_y(x)$.
In \cite{LiHelinLiinverse2018, li2019inverse, li2020inverse}, $q$ is assumed to be zero and $f$ is assumed to be a migr field.
In \cite{llm2018random}, $q$ is assumed to be unknown and deterministic and $f$ is assumed to be a Gaussian white noise, while in \cite{ma2020determining, ma2019determining}, $q$ is assumed to be deterministic or migr type and $f$ is assumed to be a migr field.

In \cite{LassasA, Lassas2008} the authors considered the inverse scattering problem for the two-dimensional random Schr\"odinger system
$(-\Delta - k^2 - q(x,\omega)) u(x,k,\omega) = \delta_y(x)~(x \in \R^2)$ which is incited by point sources $u^{in}(x) = \frac {\mathrm{i}} {4} H_0^{(1)}(k|x-y|)$;
the $H_0^{(1)}$ is the Hankel function for the first kind, and the origin $y$ of this source is located in $U$.
The potential $q(x,\omega)$ is a migr field with compact support $D$ and $\overline U \cap \overline D = \emptyset$.
The main result is presented as follows (cf.~\cite[Theorem 7.1]{Lassas2008}).

\begin{thm} \label{thm:1-RSRe2020}
In \cite{LassasA, Lassas2008}, for $x,y \in U$ the limit
\[
R(x,y) = \lim_{K \to +\infty} \frac 1 {K-1} \int_1^K k^{2 + m} |u^{sc}(x,y,k,\omega)|^2 \dif k
\]
holds almost surely where
\[
R(x,x) := \frac 1 {2^{6 + m} \pi^2} \int_{\R^2} \frac {\mu_q(z)} {|x - z|^2} \dif z, \quad x \in U.
\]
and the $\mu_q$ is  the rough strength and $-m$ is the rough order of $q$.
\end{thm}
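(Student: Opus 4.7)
The plan is to reduce the problem to the first Born approximation and then carry out an ergodicity argument that identifies the frequency average of a single realization with the ensemble average of its leading-order approximation. First, I would write the Lippmann--Schwinger equation $u^{sc}(x) = \int G_k(x,z) q(z,\omega) u(z) \dif z$, where $G_k(x,z) = \frac{\mathrm{i}}{4} H_0^{(1)}(k|x-z|)$ is the outgoing fundamental solution of $-\Delta - k^2$ in $\R^2$. Iterating this equation produces the Born series $u^{sc} = \sum_{j\ge 1} u_j^{sc}$ with
\[
u_1^{sc}(x,y,k,\omega) = \int_{\R^2} G_k(x,z)\, q(z,\omega)\, G_k(z,y)\dif z.
\]
I would verify that this series converges almost surely (exploiting the compact support of $q$ and standard Neumann-series bounds on the Born operator acting on weighted $L^2$-spaces), so that $u^{sc}(\cdot,k,\omega)$ is well defined for a.e.~$\omega$.

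Next, I would substitute the large-argument Hankel asymptotics $H_0^{(1)}(t) \sim \sqrt{2/(\pi t)}\,e^{\mathrm{i}(t-\pi/4)}$ to obtain
\[
G_k(x,z) = \frac{e^{\mathrm{i}\pi/4}}{\sqrt{8\pi k}}\,\frac{e^{\mathrm{i}k|x-z|}}{\sqrt{|x-z|}} + O(k^{-3/2}),
\]
which gives $k^{2+m}|u_1^{sc}(x,y,k,\omega)|^2$ as an explicit oscillatory double integral against $q(z,\omega)\overline{q(z',\omega)}$, with a prefactor $k^m/(64\pi^2)$ and phase $\Phi(z,z')=|x-z|+|z-y|-|x-z'|-|z'-y|$. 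I would then compute the ensemble average by inserting the covariance $\mathfrak{C}_q$, represented in leading order as $\int e^{\mathrm{i}(z-z')\cdot\xi}\mu_q(z)|\xi|^{-m}\dif\xi/(2\pi)^2$. A rescaling $\xi=k\eta$ makes the phase $\Phi(z,z') + (z-z')\cdot\eta$ purely $k$-linear, and stationary phase identifies the critical set $\{z=z',\ \eta = \widehat{x-z}-\widehat{z-y}\}$. Evaluating the Hessian and the amplitude at this critical set, together with lower-order symbol contributions of $\mathfrak{C}_q$, should produce
\[
\lim_{k\to\infty}\mathbb{E}\bigl[k^{2+m}|u_1^{sc}(x,y,k,\omega)|^2\bigr] \;=\; R(x,y),
\]
and in particular the clean kernel $\mu_q(z)/|x-z|^2$ when $x=y$, where $|\widehat{x-z}-\widehat{z-y}|=2$ uniformly.

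To pass from the ensemble limit to an almost-sure frequency average, I would employ a Wiener-type ergodic argument. The key estimate is a second-moment bound on the $k$-average: by Gaussianity of $q$, the random variable $\frac{1}{K-1}\int_1^K k^{2+m}|u_1^{sc}(x,y,k,\omega)|^2\dif k$ has variance controlled by a quadruple integral that one rewrites, via Isserlis/Wick, as a sum of products of two-point covariances and then bounds using repeated stationary phase in the $k$ variable. The off-diagonal $k$--$k'$ phase produces polynomial decay in $|k-k'|$, which after averaging yields $o(1)$ variance as $K\to\infty$. Combined with a Borel--Cantelli step along a discrete subsequence $K_j \to \infty$ and a monotonicity/continuity argument for the interpolation between $K_j$ and $K_{j+1}$, this promotes convergence in $L^2(\Omega)$ to almost-sure convergence.

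The final piece is to show that the remaining Born terms do not alter the limit. For $j\ge 2$ the term $u_j^{sc}$ accumulates one additional factor of $G_k q$, which in two dimensions gains an extra power of $k^{-1/2}$ (after stationary phase against the smoother iterated amplitude) relative to $u_1^{sc}$; consequently both $k^{2+m}|u_j^{sc}|^2$ and the cross terms $k^{2+m}u_1^{sc}\overline{u_j^{sc}}$ have ensemble averages decaying in $k$, and their frequency averages vanish almost surely by an analogous second-moment/Borel--Cantelli argument. The main obstacle I anticipate is the rigorous justification of stationary phase in the presence of the singular covariance kernel $|z-z'|^{m-2}$ and of the low regularity of $q$, which prevents one from treating $u_j^{sc}$ pointwise; one must instead work symbolically with $\mathfrak{C}_q$ as a pseudodifferential operator of order $-m$ and keep careful track of the iterated composition of such operators with the oscillatory Green kernels, so that all remainder estimates are uniform in $k$ and in $\omega$ on a set of full probability.
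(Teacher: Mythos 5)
Your plan---Born expansion, Hankel asymptotics, stationary phase for the leading ensemble average, and an Isserlis/Borel--Cantelli variance argument for ergodicity---is a legitimate route, but the technical core differs from what the paper sketches in Section~\ref{sec:Matti-RSRe2020}, and the difference lands exactly on the step you yourself flag as ``the main obstacle.'' The paper reduces everything to a single cross-frequency decay estimate, Proposition~\ref{prop:Ixy-RSRe2020}:
\[
\big|\mathbb{I}(x,y,k_1,k_2)\big|\lesssim\langle k_1-k_2\rangle^{-N}(k_1+k_2)^{-m},\quad
\mathbb{I}=\int e^{ik_1(|x-z_1|+|z_1-y|)-ik_2(|x-z_2|+|z_2-y|)}\,C(z_1,z_2)\dif z_1\dif z_2,
\]
which simultaneously yields the leading order (at $k_1=k_2$) and the cross-frequency decorrelation that drives Lemma~\ref{lem:intConv-RSRe2020}. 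Crucially, it is \emph{not} proved by stationary phase against a rescaled frequency $\xi=k\eta$: instead, two designed changes of variables are used---$\tau_1(z_1,z_2)=(z_1-z_2,z_1+z_2)$, which straightens the conormal singularity $\{z_1=z_2\}$ of $C$ into a coordinate hyperplane, and $\tau_2$, which linearizes the phase into $\tfrac{k_1+k_2}{2}s_1+\tfrac{k_1-k_2}{2}t_1$---and then H\"ormander's invariance theorems for conormal distributions (Lemmas~\ref{lem:aat-RSRe2020} and~\ref{lem:kka-RSRe2020}, i.e.\ Lemmas 18.2.1 and 18.2.9 of \cite{hormander1985analysisIII}) are invoked to conclude that the pushed-forward kernel is again of the form $\int e^{is\cdot\xi}\tilde c_2(t,\xi)\dif\xi$ with $\tilde c_2\in S^{-m}$, so that $\mathbb{I}$ is read off as the Fourier transform of a symbol. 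Your stationary-phase substitute is morally the same computation---the critical manifold $\{z=z',\ \eta=\widehat{x-z}-\widehat{z-y}\}$ you identify is exactly the set $\tau_1,\tau_2$ are designed to flatten, and it reproduces the same exponents---but because the covariance kernel $C$ is a genuine conormal distribution rather than a smooth amplitude, the hypotheses of the textbook stationary-phase lemma fail, and making your computation rigorous would require rebuilding the microlocal cutoffs near $z=z'$ and $\xi=0$ that H\"ormander's calculus packages once and for all. The symbol-calculus route also supplies, for free, the off-diagonal decay in $|k_1-k_2|$ that your Isserlis step secretly needs: unwinding the Wick pairing for the variance of $\frac{1}{K-1}\int_1^K k^{2+m}|u_1^{sc}|^2\dif k$ lands you on $|\mathbb{E}[u_1^{sc}(k)\overline{u_1^{sc}(k')}]|^2=|\mathbb{I}(x,y,k,k')|^2$, so you would need Proposition~\ref{prop:Ixy-RSRe2020} anyway. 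In short, your route is not wrong, but it amounts to reproving the conormal-distribution calculus by hand; the paper's route is more structural and makes both halves of the argument---the leading order and the ergodicity---drop out of one estimate.
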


In \cite{caro2016inverse},
the authors considered
$(-\Delta - k^2 - q(x,\omega)) u(x,k,\omega) = 0, \ x \in \Rn$ with incident plane wave $u^{in}(x) = e^{ikd \cdot s}$.
The potential $q$ is assumed to be a migr field with compact support.
The main result (cf.~\cite[Corollary 4.4]{caro2016inverse}) is as follows.

\begin{thm} \label{thm:2-RSRe2020}
In \cite{caro2016inverse}, the limit
\[
\widehat{\mu}_q(2\tau \hat x) \simeq \lim_{K \to +\infty} \frac 1 K \int_K^{2K} k^m u^{\infty}(\hat x,-\hat x,k) \overline{u^{\infty}(\hat x,-\hat x,k + \tau)} \dif k, \quad \hat x \in \mathbb S^2, \ \tau > 0.
\]
holds almost surely.
\end{thm}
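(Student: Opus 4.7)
The plan is to isolate the first Born contribution to $u^\infty$, extract $\widehat{\mu}_q$ from its frequency-averaged autocorrelation in expectation, upgrade this to almost sure convergence via a variance estimate, and dispose of all higher-order multilinear terms by microlocal estimates.

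First I would use the resolvent identity to write a Born–Neumann expansion
\[
u^{sc}(\cdot,k,\omega) = \sum_{j\geq 1} u_j(\cdot,k,\omega), \qquad u_1 = G_k * (q\, u^{in}), \quad u_{j+1} = G_k * (q\, u_j),
\]
where $G_k$ is the outgoing fundamental solution of $-\Delta-k^2$ in $\R^3$. Under backscattering $d=-\hat x$ the far-field of $u_1$ reduces to an explicit partial Fourier transform of $q$,
\[
u_1^\infty(\hat x,-\hat x,k,\omega) = c\, \widehat{q}\bigl(2k\hat x,\omega\bigr),
\]
with $c$ an absolute constant. Hence the leading contribution to the integrand in Theorem \ref{thm:2-RSRe2020} is $|c|^2 k^m\,\widehat{q}(2k\hat x)\,\overline{\widehat{q}(2(k+\tau)\hat x)}$.

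Next I would compute $\mathbb{E}_\omega[\,\widehat{q}(\xi)\,\overline{\widehat{q}(\eta)}\,]$ using the pseudodifferential structure of $\mathfrak C_q$. Representing its Schwartz kernel as an oscillatory integral with symbol $a(x,\zeta)\sim\mu_q(x)|\zeta|^{-m}$ and integrating in $y$ yields $(2\pi)^n\delta(\eta-\zeta)$, so
\[
\mathbb{E}_\omega\bigl[\widehat{q}(\xi)\,\overline{\widehat{q}(\eta)}\bigr] = \widehat{a(\cdot,\eta)}(\xi-\eta) \;\sim\; \widehat{\mu}_q(\xi-\eta)\,|\eta|^{-m}, \qquad |\eta|\to\infty.
\]
Specialising to $\xi = 2k\hat x$, $\eta=2(k+\tau)\hat x$ and multiplying by $k^m$ produces a limit proportional to $\widehat{\mu}_q(-2\tau\hat x)$ as $k\to\infty$, which matches, up to the absorbed constants in $\simeq$, the right-hand side of the theorem.

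The almost sure upgrade comes from a variance estimate on $F(K;\omega) := \tfrac{1}{K}\int_K^{2K} k^m u_1^\infty(\hat x,-\hat x,k,\omega)\,\overline{u_1^\infty(\hat x,-\hat x,k+\tau,\omega)}\,dk$. Since $q$ is Gaussian, Isserlis' theorem reduces the fourth moment of $\widehat{q}$ appearing in $\mathbb{E}_\omega[|F(K;\omega)-\mathbb{E}_\omega F(K;\omega)|^2]$ to the off-diagonal pairings, which carry an oscillatory factor $e^{2i(k-k')\hat x\cdot(\,\cdot\,)}$ against the $|\zeta|^{-m}$ decay of the symbol. A non-stationary phase argument then gives $\mathrm{Var}\,F(K;\omega) = O(K^{-\gamma})$ for some $\gamma>0$. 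A Borel–Cantelli argument along the dyadic sequence $K_j=2^j$, together with a monotonicity/interpolation step between successive scales, produces almost sure convergence for every $K$.

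The main obstacle, and the reason microlocal machinery is genuinely needed, is showing that the contributions of the higher-order Born terms $u_j^\infty$ with $j\geq 2$ vanish after the frequency averaging. Each $u_j^\infty$ can be written as a Fourier integral operator in $(k,\hat x)$ acting on the tensor product $q\otimes\cdots\otimes q$, with phase encoding iterated outgoing propagations. I would follow \cite{caro2016inverse} and use stationary-phase estimates on this FIO representation, combined with the almost sure Sobolev regularity $H^{(m-n)/2-\varepsilon}$ of a migr field, to obtain pointwise bounds $|u_j^\infty(\hat x,-\hat x,k,\omega)|\lesssim k^{-\beta_j}$ with $\beta_j>(m+1)/2$ almost surely. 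Cauchy–Schwarz then shows that every cross term and every $j\geq 2$ self-term in the product $u^\infty\overline{u^\infty(\cdot+\tau)}$, multiplied by $k^m$ and averaged over $[K,2K]/K$, is $o(1)$ almost surely. Combining this with Steps 1–3 identifies the limit with $\widehat{\mu}_q(2\tau\hat x)$ (up to the normalisation hidden in $\simeq$), completing the proof.
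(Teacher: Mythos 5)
The survey does not prove Theorem \ref{thm:2-RSRe2020}; it is quoted directly from \cite[Corollary 4.4]{caro2016inverse}, and the oscillatory-integral estimate the survey does present (Proposition \ref{prop:Ixy-RSRe2020}) treats the \emph{near-field} phase $k_1(|x-z_1|+|z_1-y|)-k_2(|x-z_2|+|z_2-y|)$, not the far-field backscattering data of this theorem. Your proposal therefore cannot be compared to a proof in the paper, but it correctly sketches the route of \cite{caro2016inverse}. The mechanism you identify is the right one: for backscattering far-field data the first Born term $u_1^\infty(\hat x,-\hat x,k)\propto\widehat{q}(2k\hat x)$ already has a linear phase, so the leading covariance
\[
\mathbb{E}_\omega\bigl[\widehat{q}(\xi)\,\overline{\widehat{q}(\eta)}\bigr] \sim \widehat{\mu}_q(\xi-\eta)\,|\eta|^{-m}
\]
is read off directly from the symbol of $\mathfrak{C}_q$, and the entire change-of-variables machinery of the near-field argument (Lemmas \ref{lem:aat-RSRe2020}--\ref{lem:kka-RSRe2020}, which exist to linearize a curved phase) is not needed. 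Steps 1--4 of your sketch are substantially right, and Step 5's Isserlis fourth-moment bound plus Borel--Cantelli along a dyadic sequence is the standard ergodicity argument (cf.\ Lemma \ref{lem:intConv-RSRe2020}), with the fourth-moment decay coming from the off-diagonal smallness of $\mathbb{E}_\omega[\widehat{q}(\xi)\overline{\widehat{q}(\eta)}]$ in $\xi-\eta$ (a consequence of $\mu_q\in C_c^\infty$) rather than from a nonstationary-phase argument per se.

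The genuine gap is in Step 6. Asserting almost-sure pointwise bounds $|u_j^\infty(\hat x,-\hat x,k,\omega)|\lesssim k^{-\beta_j}$ with $\beta_j>(m+1)/2$ is the hard quantitative core of the theorem and cannot be dispatched by reference. The terms $u_j^\infty$ for $j\geq 2$ are $j$-linear in a distribution $q(\cdot,\omega)$ that is only in $H^{(m-n)/2-\varepsilon,p}$ almost surely; controlling iterated resolvents applied to such rough data needs Agmon-type estimates whose constants carry the random Sobolev norm of $q$, and the powers of $k$ gained from oscillation must beat the powers lost to roughness. That balancing is exactly what forces the admissible range of $(m,n)$ in \cite{caro2016inverse}; your sketch gives no derivation and offers no evidence that the exponent exceeds $(m+1)/2$ rather than some weaker rate. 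Until the exponent is actually computed, the conclusion that all higher-order and cross terms vanish after frequency averaging is asserted rather than proved.
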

We note that the near-field data are used in \cite{LassasA, Lassas2008}, while in \cite{caro2016inverse},
the authors used far-field data.

Part of the results in \cite{LiHelinLiinverse2018} and \cite{li2019inverse, li2020inverse} are similar to each other and we only survey the first result in \cite{LiHelinLiinverse2018}.
In \cite{LiHelinLiinverse2018} the authors studied the Helmholtz equation
$(-\Delta - k^2) u(x) = f$ where $f$ is the source of migr type.
Note that the potential equals zero.
The main result (cf.~\cite[Theorem 3.9]{LiHelinLiinverse2018}) is similar to Theorem \ref{thm:1-RSRe2020}. 
\begin{thm} \label{thm:3-RSRe2020}
In \cite{LiHelinLiinverse2018}, the limit
\[
\int \frac {\mu_f(z)} {|x - z|} \dif z \simeq \lim_{K \to +\infty} \frac 1 {K-1} \int_1^K k^{1 + m} |u^{sc}(x,k,\omega)|^2 \dif k, \quad x \in U,
\]
holds almost surely.
\end{thm}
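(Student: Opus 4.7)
The plan is to split the proof into a deterministic high-frequency asymptotic computation of the second moment of $u^{sc}$ and a probabilistic ergodic step that passes from the expectation to a single realization. First I would represent the scattered field by convolving the random source against the outgoing Green's function $\Phi_k$ of $-\Delta-k^2$ in $\R^2$,
\[
u^{sc}(x,k,\omega) = \agl[f(\cdot,\omega),\Phi_k(x-\cdot)],
\]
understood as a distributional pairing: for $f$ migr of rough order $-m$, the sample paths almost surely lie in a negative Sobolev space, while $\Phi_k(x-\cdot)$ is smooth on $\supp f$ when $x\in U$. Taking expectation and using Fubini,
\[
\mathbb{E}|u^{sc}(x,k)|^2 = \int\!\!\int \Phi_k(x-y)\,\overline{\Phi_k(x-z)}\, K_{\mathfrak{C}_f}(y,z) \dif y \dif z,
\]
where $K_{\mathfrak{C}_f}$ is the Schwartz kernel of the classical pseudodifferential operator $\mathfrak{C}_f$ with principal symbol $\mu_f(y)|\xi|^{-m}$.

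Next I would extract the leading high-frequency behavior. Using the Hankel asymptotic $\Phi_k(y)\sim c\, e^{i(k|y|-\pi/4)}/\sqrt{k|y|}$ valid for $k|y|\to\infty$, the product of Green's functions factorizes, for $x\in U$, into an oscillatory phase $e^{ik(|x-y|-|x-z|)}$ and a slowly varying amplitude of order $k^{-1}$. Since $K_{\mathfrak{C}_f}(y,z)$ concentrates on the diagonal $y=z$, the change of variables $y=z+h$ together with the linearization $|x-z-h|-|x-z| = -\widehat{x-z}\cdot h+O(|h|^2)$ turns the inner $h$-integral into precisely the Fourier transform of $h\mapsto K_{\mathfrak{C}_f}(z,z+h)$ evaluated at frequency $k\,\widehat{x-z}$. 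By the very definition of the principal symbol, this Fourier transform equals $\mu_f(z)|k\,\widehat{x-z}|^{-m}=k^{-m}\mu_f(z)$ to leading order, and assembling the amplitudes yields
\[
\mathbb{E}|u^{sc}(x,k)|^2 = c\, k^{-(1+m)}\int \frac{\mu_f(z)}{|x-z|}\dif z + o\!\big(k^{-(1+m)}\big),
\]
so multiplying by $k^{1+m}$ and Ces\`aro-averaging in $k$ shows that the expected average converges to the claimed integral.

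The remaining, genuinely probabilistic step is to upgrade this expected limit to an almost-sure one along a single sample path. For that I would bound the variance of the Ces\`aro mean by computing, via the Isserlis/Wick formula for Gaussian fields, the fourth moment
\[
\mathbb{E}\!\big(|u^{sc}(x,k_1,\omega)|^2\,|u^{sc}(x,k_2,\omega)|^2\big),
\]
which decomposes into the ``decoupled'' product $\mathbb{E}|u^{sc}(x,k_1)|^2\cdot\mathbb{E}|u^{sc}(x,k_2)|^2$ plus two ``crossed'' double oscillatory integrals whose phases involve $k_1\pm k_2$. A stationary-phase/symbol-calculus estimate on the crossed terms should give decay in $|k_1-k_2|$ strong enough that the $[1,K]^2$-integral of the variance is $o(K^2)$, whence Chebyshev plus a Borel--Cantelli argument along a geometric subsequence of $K$'s, combined with continuity of $k\mapsto u^{sc}(x,k,\omega)$, promotes $L^2$ convergence to the almost-sure convergence claimed in the theorem.

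The main obstacle is the bookkeeping for the two oscillatory double integrals, in particular the fact that the phase $|x-y|-|x-z|$ has no interior critical point in the classical sense; the integrals are tractable only because the migr kernel is diagonally concentrated, and they must be treated by the symbol calculus of $\mathfrak{C}_f$ rather than by standard stationary phase. The same difficulty reappears, squared, in the variance estimate, where one must show that the crossed terms really are smaller than the leading term by a factor summable in $|k_1-k_2|$; this is precisely where the rough order $-m$ enters quantitatively and where the full Gaussianity of $f$, not merely its second-order statistics, is decisively used.
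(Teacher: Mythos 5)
Your plan coincides with the route the cited work and this survey both follow: write $u^{sc}$ as the pairing of the random source with the outgoing Green's function, compute $\mathbb{E}|u^{sc}|^2$ through the covariance kernel $K_{\mathfrak{C}_f}$ and the large-argument asymptotics of $\Phi_k$, extract the leading $k^{-(1+m)}$ behaviour from the resulting conormal oscillatory integral, and then upgrade the expected Ces\`aro limit to an almost-sure one by bounding $\mathrm{Cov}\bigl(|u^{sc}(x,k_1)|^2,|u^{sc}(x,k_2)|^2\bigr)$ via the Gaussian fourth-moment identity, Chebyshev, Borel--Cantelli along a geometric subsequence, and continuity in $k$. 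The one step you treat heuristically --- linearizing $|x-z-h|-|x-z|=-\widehat{x-z}\cdot h+O(|h|^2)$ and reading off the Fourier transform of $h\mapsto K_{\mathfrak{C}_f}(z,z+h)$ --- is exactly where the paper invokes H\"ormander's coordinate-invariance of conormal distributions (Lemmas~\ref{lem:aat-RSRe2020} and~\ref{lem:kka-RSRe2020}) through the explicit changes of variables $\tau_1,\tau_2$ of Proposition~\ref{prop:Ixy-RSRe2020}; that machinery both disposes of the $O(|h|^2)$ correction rigorously against the diagonal singularity of $K_{\mathfrak{C}_f}$ and produces the rapid $\langle k_1-k_2\rangle^{-N}(k_1+k_2)^{-m}$ decay that your variance estimate needs.
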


In \cite{llm2018random}, the authors considered direct and inverse scattering for \eqref{eq:1a-RSRe2020}--\eqref{eq:1c-RSRe2020}
with an unknown deterministic potential and a Gaussian noise source of the form $\sigma(x) \dot B_x(\omega)$, where $\sigma(x)$ is the variance and $\dot B_x(\omega)$ is the Gaussian white noise. 
The main (cf.~\cite[Lemma 4.3]{llm2018random}) result is 
\begin{thm} \label{thm:4-RSRe2020}
In \cite{llm2018random}, the identity
\[
\widehat{\sigma^2} (x) = 4\sqrt{2\pi} \lim_{j \to +\infty} \frac 1 {K_j} \int_{K_j}^{2K_j} \overline{u^\infty(\hat x,k,\omega)} \cdot u^\infty(\hat x,k+\tau,\omega) \dif{k}.
\]
holds almost surely.
\end{thm}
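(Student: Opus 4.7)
The plan is to expand $u^\infty$ in a Born-type series around the free case ($q\equiv0$), isolate the leading term as a stochastic Fourier integral against $\sigma\dot B$, and show that the quadratic correlation of this leading term ergodizes in $k$ to $\widehat{\sigma^2}(\tau\hat x)$ (up to an explicit constant), while the remaining Born terms contribute vanishingly in the limit. The source being a pure Gaussian white noise (not a migr field) simplifies the stochastic Wick pairings considerably compared with \cite{ma2020determining, ma2019determining}.

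First, using the outgoing free resolvent $R_0(k) = (-\Delta - k^2)^{-1}$, I would write the Lippmann–Schwinger representation
\[
u(\cdot,k,\omega) = \sum_{j\geq 0} \bigl(-R_0(k) q\bigr)^j R_0(k) f,
\]
and pass to the far-field to obtain $u^\infty = u_0^\infty + u_1^\infty + \cdots$, where, in $n=3$,
\[
u_0^\infty(\hat x, k, \omega) = \frac{1}{4\pi}\int_{\Rn} e^{-\mathrm{i}k\hat x\cdot y}\, \sigma(y)\, \dot B_y(\omega) \dif{y}
\]
is the stochastic Fourier integral of the source at frequency $k\hat x$, and each $u_j^\infty$ with $j\geq 1$ carries $j$ factors of the deterministic potential $q$.

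Second, plugging this expansion into $\overline{u^\infty(\hat x,k,\omega)}\, u^\infty(\hat x,k+\tau,\omega)$ and splitting the $k$-average, the leading term is
\[
I_K^{(0)}(\tau) := \frac{1}{K}\int_K^{2K} \overline{u_0^\infty(\hat x, k, \omega)}\, u_0^\infty(\hat x, k+\tau, \omega) \dif{k}.
\]
The white-noise It\^o isometry $\mathbb{E}[\overline{\dot B_y}\dot B_z] = \delta(y-z)$ and Fubini give, formally, $\mathbb{E}[I_K^{(0)}(\tau)] = \frac{1}{16\pi^2} \int_{\Rn} e^{-\mathrm{i}\tau\hat x\cdot y} \sigma(y)^2 \dif{y}$, which equals $\frac{1}{16\pi^2}\widehat{\sigma^2}(\tau\hat x)$ in the symmetric convention up to a factor of $(2\pi)^{3/2}$; reconciling the conventions produces exactly the prefactor $4\sqrt{2\pi}$ claimed. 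For the variance, I would compute the fourth moment via the Wick pairing: the resulting double $k$-integral has an oscillatory kernel $e^{\mathrm{i}(k_1-k_2)\hat x\cdot(y-z)}$ which, after integration by parts in $k$ against the phase $\hat x\cdot(y-z)$ (non-degenerate on $(\supp\sigma)^2$ off the diagonal), yields $\mathbb{E}|I_K^{(0)}(\tau) - \tfrac{1}{16\pi^2}\widehat{\sigma^2}(\tau\hat x)|^2 = O(K^{-\alpha})$ for some $\alpha>0$.

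Third, the cross terms $\mathbb{E}[\overline{u_0^\infty(k)}\, u_j^\infty(k+\tau)]$ for $j\geq 1$ and the purely higher-order terms must be shown to vanish in mean-square after the $k$-average. Each involves at least one factor $R_0(k)q$ on the support of $\sigma\dot B$, bringing an additional oscillatory integral on $\supp q$; weighted $L^2$ (Agmon-type) bounds for $R_0(k) q$ combined with another non-stationary phase integration in $k$ produce polynomial decay in $K$. Selecting a geometric subsequence $K_j = 2^j$ so that $\sum_j \mathbb{E}|I_{K_j}(\tau) - \frac{1}{4\sqrt{2\pi}}\widehat{\sigma^2}(\tau\hat x)|^2 < \infty$, Chebyshev's inequality and Borel-Cantelli upgrade the $L^2(\Omega)$ convergence to almost sure convergence along $K_j$.

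The main obstacle will be the third step: keeping precise track of the $k$-dependence of $R_0(k) q$ when it acts on a distributional object as rough as white noise, while still extracting enough negative powers of $k$ from the $k$-average to make Borel-Cantelli succeed. The white-noise setting of \cite{llm2018random} handles this directly with stochastic integration and stationary-phase estimates, rather than through the symbolic calculus of pseudodifferential operators later employed in \cite{ma2020determining, ma2019determining}; this is the key technical difference and the place where the argument is most delicate.
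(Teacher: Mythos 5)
Your proposal is correct and follows essentially the same route as \cite{llm2018random} (the survey itself only cites Lemma 4.3 there without reproducing the argument): Born/Lippmann--Schwinger expansion, isolation of the free far-field term $u_0^\infty$, white-noise It\^o isometry to identify the limit (your constant $4\sqrt{2\pi}$ works out in the symmetric Fourier convention), Wick pairing and non-stationary phase to get mean-square decay, Agmon-type resolvent bounds to kill the $q$-dependent corrections, and Chebyshev/Borel--Cantelli along $K_j=2^j$ to upgrade to almost sure convergence. The one thing worth noting is that the survey packages the last step differently: instead of Borel--Cantelli along a geometric subsequence it invokes Lemma \ref{lem:intConv-RSRe2020}, which converts ergodicity into the single integrability condition $\int_1^\infty k^{m-1}\,\mathbb{E}|g(k,\cdot)|\,\mathrm{d}k<\infty$ on the relevant error term $g$; both devices rest on the same underlying expectation/variance decay estimates, so the substance of the argument is unchanged.
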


The paper \cite{ma2020determining} extended the work \cite{llm2018random} to the case where the source is a migr field $f$ with $\mu_f$ as its rough strength and $-m$ as its rough order.
For notational convenience, we shall use $\{K_j\} \in P(t)$ to signify a sequence $\{K_j\}_{j \in \mathbb{N}}$ satisfying $K_j \geq C j^t ~(j \in \mathbb{N})$ for some fixed constant $C > 0$. Throughout the rest of the paper, $\gamma$ stands for a fixed positive real number.
The main result (cf.~\cite[Theorem 4.3]{ma2020determining})
is presented below.

\begin{thm} \label{thm:5-RSRe2020}
In \cite{ma2020determining}, assume $2 < m < 3$ and let $m^* = \max \{2/3,\, (3-m)^{-1}/2 \}$. 
Assume that $\{K_j\} \in P(m^* + \gamma)$. 
Then  $\exists\, \Omega_0 \subset \Omega \colon \mathbb{P}(\Omega_0) = 0$, $\Omega_0$ depending only on $\{K_j\}_{j \in \mathbb{N}}$, such that for any $\omega \in \Omega \backslash \Omega_0$, there exists $S_\omega \subset \R^3 \colon |S_\omega| = 0$, it holds that for $\forall \tau \in \R_+$ and $\forall \hat x \in \mathbb{S}^2$ satisfying $\tau \hat x \in \R^3 \backslash S_\omega$,
\[
\widehat{\mu} (\tau \hat x) = 4\sqrt{2\pi} \lim_{j \to +\infty} \frac 1 {K_j} \int_{K_j}^{2K_j} k^m \overline{u^\infty(\hat x,k,\omega)} \cdot u^\infty(\hat x,k+\tau,\omega) \dif{k},
\]
holds for $\forall \tau \in \R_+$ and $\forall \hat x \in \mathbb{S}^2$ satisfying $\tau \hat x \in \R^3 \backslash S_\omega$.
\end{thm}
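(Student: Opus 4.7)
The plan is to run a Born expansion of the total field, identify the leading contribution as a covariance of $\hat f$ on the Ewald sphere, and then use pseudodifferential/microlocal bounds to kill every remaining piece, before upgrading the in-expectation identity to an almost-sure one along $\{K_j\}$ via Borel--Cantelli.

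First I would write $u = u_0 + u_1 + u_2 + \cdots$ as a Born series, where $u_0$ solves the free Helmholtz equation with source $f$ and the higher iterates $u_\ell$ arise from convolving with the outgoing resolvent against the deterministic potential $q$. The far-field pattern splits accordingly as $u^\infty = u_0^\infty + u_1^\infty + \cdots$, with the fundamental identity
\[
u_0^\infty(\hat x, k) = c_3\, \hat f(k\hat x), \qquad c_3 = \tfrac{1}{4\pi},
\]
so that the leading piece under the integrand is, up to a constant,
\(
\frac{1}{K_j}\int_{K_j}^{2K_j} k^m \overline{\hat f(k\hat x,\omega)}\, \hat f((k+\tau)\hat x, \omega)\dif{k}.
\)
Taking expectation and using that $\mathfrak C_f$ is a classical pseudodifferential operator with principal symbol $\mu(x)|\xi|^{-m}$, a standard Schwartz-kernel computation gives
\[
\mathbb E \bigl(\overline{\hat f(\xi)}\,\hat f(\eta)\bigr) = \hat\mu(\eta-\xi)\,|\xi|^{-m} + r(\xi,\eta),
\]
where $r$ enjoys strictly better decay in $|\xi|$. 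Substituting $\xi = k\hat x$, $\eta=(k+\tau)\hat x$ makes the factor $|\xi|^{-m}$ cancel the $k^m$ weight exactly, and averaging in $k\in[K_j,2K_j]$ produces $\hat\mu(\tau\hat x)$ plus an $O(K_j^{-\delta})$ error. Matching the dimensional constants fixes the prefactor $4\sqrt{2\pi}$.

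Second, and this is the hard part, I would show that all higher-order pieces of $u^\infty$ contribute nothing to the limit. Each $u_\ell^\infty(\hat x,k,\omega)$ with $\ell\geq 1$ is an oscillatory integral whose amplitude involves $f$ convolved against iterated resolvent kernels and the potential $q$. Using the calculus of pseudodifferential operators to recast the resolvent and then stationary phase on the phase $k(\hat x\cdot y - |y-\cdot|)$, I would gain enough powers of $k^{-1}$ to ensure that the cross terms $\overline{u_0^\infty}\,u_\ell^\infty$ and the $\ell,\ell'\geq 1$ self-pairings, after multiplication by $k^m$ and integration, have expectation $O(K_j^{-\delta'})$. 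The strict range $2<m<3$ is exactly what makes the stationary-phase gain beat the $k^m$ weight; the exponent $m^* = \max\{2/3,(3-m)^{-1}/2\}$ records the worst of the two resulting decay rates.

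Third, to pass from the expectation-level identity to the almost-sure one, I would compute the variance of
\[
X_j(\tau,\hat x,\omega) := \frac{1}{K_j}\int_{K_j}^{2K_j} k^m \overline{u^\infty(\hat x,k,\omega)}\, u^\infty(\hat x,k+\tau,\omega)\dif{k}
\]
using the fourth-moment (Isserlis/Wick) formula for the Gaussian field $f$, and show it is bounded by $C K_j^{-1/m^*}$, up to logarithmic factors. The hypothesis $K_j\geq C j^{m^*+\gamma}$ then makes $\sum_j \mathbb P\bigl(|X_j - \mathbb E X_j|>\varepsilon\bigr)$ summable by Chebyshev, and Borel--Cantelli yields a null set $\Omega_0$, depending only on $\{K_j\}$, outside which $X_j\to\hat\mu(\tau\hat x)/(4\sqrt{2\pi})$ pointwise in $(\tau,\hat x)$. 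A Fubini argument in $(\tau,\hat x)$ then provides, for each $\omega\notin\Omega_0$, an exceptional set $S_\omega\subset\R^3$ of Lebesgue measure zero off which convergence holds.

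I expect the second step to be the main obstacle: establishing uniform-in-$k$ stationary-phase bounds on the $u_\ell^\infty$ with amplitudes built from a rough migr random distribution requires one to marry the symbol calculus of $\mathfrak C_f$ with Fourier integral operator estimates, and to keep the decay strong enough to absorb the $k^m$ factor throughout the multi-variable covariance computation. Once this decay is in hand, the variance estimate and Borel--Cantelli step are essentially forced by the choice of $m^*$.
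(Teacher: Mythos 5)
Your roadmap matches the one actually followed in \cite{ma2020determining} and surveyed in Section \ref{sec:MKS2-RSRe2020}: a Born (Lippmann--Schwinger) expansion, identification of the leading order via $\hat f(k\hat x)$ whose Wick pairing produces $\hat\mu$ with the $k^m$ weight cancelling $|\xi|^{-m}$, decay estimates for the remaining cross-terms, and a variance/Borel--Cantelli step along the sequence $\{K_j\}$, the last being exactly what Lemma \ref{lem:intConv-RSRe2020} packages. You also correctly identify the second step as the crux. However, there you offer only ``stationary phase plus pseudodifferential/FIO calculus'' and explicitly leave it open; that is where the theorem's real content sits, and it does not close without a further device.

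The concrete obstruction is the following. After peeling off $k^{-1}\cdot k^{-1}$ by integration by parts with the operators $L_1,L_2$ from Section \ref{sec:MKS2-RSRe2020}, one must bound amplitudes such as $\mathcal J_{2;a,b}=\big|\int e^{i(s-t)\cdot\xi}\,\xi_a\xi_b\,c(\cdot,\xi)\,d\xi\big|$ with $\xi_a\xi_b\,c\in S^{-m+2}$, and then integrate the resulting bound over $D_f\times D_f$. A nonstationary-phase argument in $\xi$ faces a dichotomy: with fewer than three $\xi$-derivatives the symbol is not absolutely integrable in $\R^3$ (since $-m+2-2=-m>-3$ for $m<3$), while with three derivatives the integral converges but produces $|s-t|^{-3}$, which is not locally integrable across the diagonal in $\R^3$. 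The device that breaks this impasse is the fractional Laplacian in $\xi$ (Lemmas \ref{lem:FracExp-MLmGWsSchroEqu2019} and \ref{cor:FracSymbol-MLmGWsSchroEqu2019}): applying $(-\Delta_\xi)^{\tau/2}$ with $3-m<\tau<1$ trades a factor $|s-t|^{-\tau}$ for a symbol gain of $\agl[\xi]^{-\tau}$, so the $\xi$-integral converges by \eqref{eq:hotF1F1J3s1-MLSchroEqu2018} while $|s-t|^{-2-\tau}$ remains locally integrable by \eqref{eq:hotF1F1J3s2-MLSchroEqu2018}; this window is nonempty precisely because $m>2$, which is the actual role of that hypothesis. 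The resulting singular integrals over $D_f\times D_f$ are then closed by Lemma \ref{lem:intbdd-MLSchroEqu2018}, giving the $k^{-2}$ bound on the variance-type kernel $\mathbb K$ that feeds into Lemma \ref{lem:intConv-RSRe2020} and fixes the exponent $m^*$. Unless you supply this fractional integration-by-parts mechanism (or an equivalent interpolation between symbol order and diagonal singularity), the step you flagged as ``the main obstacle'' remains a genuine gap.
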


Then in \cite{ma2019determining} the authors further extended the work \cite{ma2020determining} to the case where both the potential $q$ and the source  $f$ are random of migr type.
The $f$ (resp.~$q$) is assumed to be supported in the domain $D_f$ (resp.~$D_q$).
In what follows, we assume that there is a positive distance between the convex hulls of the supports of $f$ and $q$, i.e.,
\begin{equation} \label{eq:fqSeparation-MLmGWsSchroEqu2019}
	\dist(\mathcal{CH}(D_f), \mathcal{CH}(D_q)) := \inf\{\, |x - y| \,;\, x \in \mathcal{CH}(D_f),\, y \in \mathcal{CH}(D_q) \,\} > 0,
\end{equation} 
where $\mathcal{CH}$ means taking the convex hull of a domain. 
Therefore, one can find a plane which separates $D_f$ and $D_q$. 
In order to simplify the exposition, we assume that $D_f$ and $D_q$ are convex domains and hence $\mathcal{CH}(D_f)=D_f$ and $\mathcal{CH}(D_q)=D_q$. 
Moreover, we let $\boldsymbol{n}$ denote the unit normal vector of the aforementioned plane that separates $D_f$ and $D_q$, pointing from the half-space containing $D_f$ into the half-space containing $D_q$.
Then the result of this work (cf.~\cite[Theorems 1.1 and 1.2]{ma2019determining}) is as follows.

\begin{thm} \label{thm:6-RSRe2020}
In \cite{ma2019determining}, suppose that $f$ and $q$ in system \eqref{eq:1a-RSRe2020}-\eqref{eq:1c-RSRe2020} are migr fields of order $-m_f$ and $-m_q$, respectively, satisfying
\[
2 < m_f < 4, \ m_f < 5m_q - 11.
\]
Assume that \eqref{eq:fqSeparation-MLmGWsSchroEqu2019} is satisfied and $\boldsymbol{n}$ is defined as above. 
Then, independent of $\mu_q$, $\mu_f$ can be uniquely recovered almost surely and the recovering formula of $\mu_f$ is given by
\begin{equation} \label{eq:Thm1Rec-MLmGWsSchroEqu2019}
	\displaystyle{ \widehat \mu_f(\tau \hat x)
		= \left\{\begin{aligned}
		& \lim_{K \to +\infty} \frac {4\sqrt{2\pi}} K \int_K^{2K} k^{m_f} \overline{u^\infty(\hat x,k,\omega)} u^\infty(\hat x,k+\tau,\omega) \dif k, \quad \hat{x} \cdot \boldsymbol{n} \geq 0,\medskip \\
		& \ \overline{\widehat \mu_f(-\tau \hat x)}, \quad \hat{x} \cdot \boldsymbol{n} < 0,
		\end{aligned}\right. }
	\end{equation}
	where $\tau \geq 0$ and $u^\infty(\hat x,k,\omega) \in \mathcal M_f(\omega) := \{\, u^\infty(\hat x, k, \omega) \,;\, \forall \hat{x} \in \mathbb{S}^2,\, \forall k \in \R_+\, \}$.

When $m_q < m_f$, $\mu_q$ can be uniquely recovered almost surely by the data set $\mathcal M_q(\omega)$ for a fixed $\omega \in \Omega$.
Moreover, the recovering formula is given by
	\begin{equation} \label{eq:Thm2Rec-MLmGWsSchroEqu2019}
	\displaystyle{ \widehat \mu_q(\tau \hat x)
		= \left\{\begin{aligned}
		& \lim_{K \to +\infty} \frac {4\sqrt{2\pi}} K \int_K^{2K} k^{m_q} \overline{u^\infty(\hat x,k,-\hat x,\omega)} u^\infty(\hat x,k\!+\! \tfrac \tau 2,-\hat x,\omega) \dif k, \ \hat{x} \cdot \boldsymbol{n} \geq 0,\medskip \\
		& \ \overline{\widehat \mu_f(-\tau \hat x)}, \quad \hat{x} \cdot \boldsymbol{n} < 0,
		\end{aligned}\right. }
\end{equation}
where $\tau \geq 0$ and $u^\infty(\hat x,k,-\hat x,\omega) \in \mathcal M_q(\omega) := \{\, u^\infty(\hat x, k, -\hat x, \omega) \,;\, \forall \hat{x} \in \mathbb{S}^2,\, \forall k \in \R_+\, \}$.
\end{thm}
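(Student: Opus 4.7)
My plan is to follow the strategy sketched for the earlier theorems in the survey and set up a Born (Lippmann--Schwinger) expansion of $u^{sc}$ about the free resolvent. Writing $u = u_0 + u_1 + u_2 + \cdots$, where $(-\Delta - k^2) u_0 = f$ and $(-\Delta - k^2) u_{j+1} = q u_j$, so that $u_j = (R_k q)^j R_k f$ with $R_k$ the outgoing resolvent, gives a corresponding decomposition of the far-field pattern $u^\infty = \sum_j u_j^\infty$. I would first verify that the expansion converges in a suitable almost-sure sense for large $k$ using the (now standard) regularity of migr fields, so that termwise manipulation is legal.

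The heart of the argument is the leading-order computation. Since $u_0^\infty(\hat x,k,\omega)$ is, up to a dimensional constant, $\widehat{f}(k\hat x,\omega)$, a direct expectation computation using that $\mathfrak C_f$ has principal symbol $\mu_f(x)|\xi|^{-m_f}$ yields the model identity
\begin{equation*}
\mathbb{E}\bigl[\,k^{m_f}\,\overline{u_0^\infty(\hat x,k,\omega)}\,u_0^\infty(\hat x,k+\tau,\omega)\bigr] = \frac{1}{4\sqrt{2\pi}}\,\widehat{\mu_f}(\tau\hat x) + o(1),\qquad k\to\infty,
\end{equation*}
which is exactly the right-hand side of \eqref{eq:Thm1Rec-MLmGWsSchroEqu2019} after frequency averaging. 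The passage from expectation to the almost-sure limit follows a Chebyshev-plus-Borel--Cantelli scheme along a discretization $\{K_j\}$, as in Theorems \ref{thm:4-RSRe2020}--\ref{thm:5-RSRe2020}: one bounds the second moment of the frequency-averaged quantity and sums in $j$, then upgrades from a countable dense set of $(\tau,\hat x)$ to all parameters off a null set $S_\omega$ by continuity.

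The hard part will be controlling the cross terms and the higher-order terms $u_j^\infty$ for $j\ge 1$. This is where the pseudodifferential/microlocal machinery advertised in Section \ref{sec:Ma-RSRe2020}, the assumption $2<m_f<4$ together with $m_f<5m_q-11$, and crucially the geometric separation \eqref{eq:fqSeparation-MLmGWsSchroEqu2019} enter. Each $u_j^\infty$ is an oscillatory integral whose kernel, viewed through the Born iteration, alternately integrates against $q$ on $D_q$ and convolves with the outgoing Green's function; the positive distance between $\mathcal{CH}(D_f)$ and $\mathcal{CH}(D_q)$ means that in the crucial iterations the phase of the composite operator has non-vanishing gradient, so repeated non-stationary phase (or Fourier integral operator composition with calculus of symbols) delivers extra decay in $k$. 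The role of $m_f<5m_q-11$ is exactly to ensure that after multiplication by $k^{m_f}$ and frequency averaging over $[K,2K]$, the contribution of every $j\ge 1$ term and of every cross product $\overline{u_i^\infty}\,u_j^\infty$ with $(i,j)\ne(0,0)$ has vanishing $L^2(\Omega)$ norm as $K\to\infty$, so Borel--Cantelli kills them almost surely.

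For the second formula \eqref{eq:Thm2Rec-MLmGWsSchroEqu2019}, I would repeat the scheme in the active (backscattering) regime with incident plane wave, where the Born expansion is made around the incident field $u^{in}=e^{ik d\cdot x}$ rather than around $0$, so that the leading far-field term is now driven by $q$ and reads, up to constants and the backscattering rescaling $2k\hat x\mapsto k+\tfrac{\tau}{2}$, as the Fourier transform $\widehat{q}(2k\hat x)$. Inserting the covariance with principal symbol $\mu_q(x)|\xi|^{-m_q}$ gives $\widehat{\mu_q}(\tau\hat x)$ after weighting by $k^{m_q}$, and the separation condition again decouples the interference from $f$; the hypothesis $m_q<m_f$ guarantees that the source contribution, which in this correlator enters only through iterations that must traverse the separating hyperplane at least once, becomes lower order. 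The symmetry under $\tau\hat x\mapsto -\tau\hat x$ follows because $\mu_f,\mu_q$ are real-valued, which produces the conjugation case in both formulae for $\hat x\cdot\boldsymbol{n}<0$. The dominant technical obstacle, in all of the above, is the uniform-in-$k$ pseudodifferential/FIO estimate needed to ensure the higher-order Born terms contribute $o(1)$ after the weighted frequency average; this is where I would expect to spend essentially all of the real work.
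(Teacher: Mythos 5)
Your outline matches the paper's framework at the level of strategy: Born expansion $u_j = (R_k q)^j R_k f$, leading-term identification of the correlator with $\widehat{\mu_f}(\tau\hat x)$, ergodicity via a second-moment/Borel--Cantelli scheme (Lemma \ref{lem:intConv-RSRe2020}), and non-stationary phase enabled by the separation condition \eqref{eq:fqSeparation-MLmGWsSchroEqu2019} and by $\hat x\cdot\boldsymbol{n}\geq 0$ (which gives the lower bound on $|\nabla_y\varphi|$ for $L_{2}$). That is all faithful to Section \ref{sec:Ma-RSRe2020}.

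However, there is a concrete gap where you write that ``repeated non-stationary phase ... delivers extra decay in $k$.'' After applying $L_1^2 L_2^2$ to the phase in \eqref{eq:bbJ-RSRe2020}, the amplitude acquires second-order derivatives $\partial_{s_a,s_b}^2 \int e^{\textrm{i}(t-s)\cdot\eta}c_f(t,\eta)\,\dif\eta$, and the resulting symbols $\Delta_\eta(c_f(t,\eta)\eta_a\eta_b)\in S^{-m_f}$ are not absolutely integrable in $\eta\in\mathbb{R}^3$ when $m_f\leq 3$ (the theorem only requires $m_f>2$). Trading more $\eta$-derivatives for decay, by the usual integer-order integration by parts, produces factors $|s-t|^{-j}$: at $j=2$ the $\eta$-integral is still divergent, and at $j=3$ one obtains $|s-t|^{-3}$, which is not locally integrable in $\mathbb{R}^3$. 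So integer-order non-stationary phase (or a naive FIO/symbol calculus) does not close. The step you are missing is precisely the fractional Laplacian interpolation of Lemmas \ref{lem:FracExp-MLmGWsSchroEqu2019} and \ref{cor:FracSymbol-MLmGWsSchroEqu2019}: choosing $s\in(\max\{0,3-m_f\},1)$ and inserting $(-\Delta_\eta)^{s/2}$ gives simultaneously $\agl[\eta]^{-m_f-s}$ integrable (condition \eqref{eq:hotF1F1J3s1-MLmGWsSchroEqu2019}) and $|s-t|^{-2-s}$ locally integrable (condition \eqref{eq:hotF1F1J3s2-MLmGWsSchroEqu2019}), which is exactly what makes the bound $|\mathbb J|\lesssim k^{-4}$ go through. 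This is the technical novelty that the survey singles out, and your proposal should identify it explicitly rather than delegating to generic ``pseudodifferential machinery.''

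Two smaller points. First, the role of the rough orders is more structured than you indicate: the condition $m_f<5m_q-11$ is calibrated so that, after the $k^{m_f}$ weighting and the frequency average, \emph{all} higher Born and cross terms are $o(1)$; it is not a purely qualitative smoothness statement. Second, for \eqref{eq:Thm2Rec-MLmGWsSchroEqu2019} the correct reading of $m_q<m_f$ is that the $q$-driven backscattering correlator is the dominant (slowest-decaying) contribution, so that after weighting by $k^{m_q}$ it survives while the $f$-driven pieces are sub-leading; your phrasing has the right effect but slightly misattributes the mechanism.
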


\begin{rem}
In Theorem \ref{thm:6-RSRe2020}, the data sets $\mathcal M_f(\omega)$ and $\mathcal M_f(\omega)$ correspond to the case where the incident wave is passive and active, respectively.
Readers may refer to \cite[Section 1]{ma2019determining} for more details.
\end{rem}

Readers should note that the recovery formulae in Theorems \ref{thm:1-RSRe2020}--\ref{thm:6-RSRe2020} only use a single realization of the randomness; the terms on the left-hand-side are independent of the random sample $\omega$, while these on the right-hand-side are limits of terms depending on $\omega$.
This feature is also described as ``statistically stable'' in some literature.
The key ingredient to make this single-realization recovery possible is ergodicity; on the right-hand-side of these recoveries formulae in Theorems \ref{thm:1-RSRe2020}--\ref{thm:6-RSRe2020}, the probabilistic expectation operation are replaced by the average in the frequency variable and then taking to the infinity of the frequency variable.
Theorems \ref{thm:1-RSRe2020} and \ref{thm:3-RSRe2020} utilize near-field data to  achieve the recovery, while Theorem \ref{thm:2-RSRe2020} and \ref{thm:4-RSRe2020}--\ref{thm:6-RSRe2020} use far-field data.
Due to this difference, the corresponding techniques required in the proofs are also different.
We shall present these techniques separately in Sections \ref{sec:Matti-RSRe2020} and \ref{sec:Ma-RSRe2020}.

The rest of this paper is organized as follows. 
In Section \ref{sec:dp-RSRe2020}, we first give some preliminaries and present the well-posedness of the direct problems.
In Section \ref{sec:Matti-RSRe2020}, we give the sketch of the proofs in 
\cite{LassasA, Lassas2008, caro2016inverse, LiHelinLiinverse2018, li2019inverse, li2020inverse}.
Section \ref{sec:Ma-RSRe2020} is devoted to the details of the works \cite{ma2020determining, ma2019determining}.
We conclude the paper with some remarks and open problems in Section \ref{sec:conclusion-RSRe2020}.

\section{Preliminaries and the direct problems} \label{sec:dp-RSRe2020}

Due to the presence of the randomness, the regularity of the potential and/or the source may be too bad to fall into the scenarios of standard PDEs techniques.
In this section, we show some techniques used in reformulating the direct problems of  \eqref{eq:1a-RSRe2020}-\eqref{eq:1c-RSRe2020} in a proper sense.
Before that, we first 
present some preliminaries as well as some facts related to the migr field for the subsequent use.

\subsection{Preliminary and auxiliary results} \label{subsec:preli-SchroEqu2018}

For convenient reference and self-containedness, we first present some preliminary and auxiliary results.
In this paper, we mainly focus on the three-dimensional case. Nevertheless, some of the results derived also hold for higher dimensions and in those cases, we choose to present the results in the general dimension $n\geq 3$ since they might be useful in other studies.
Here we closely follow the paper \cite{ma2019determining}.

Throughout the paper, we write $\mathcal{L}(\mathcal A, \mathcal B)$ to denote the set of all the bounded linear mappings from a normed vector space $\mathcal A$ to a normed vector space $\mathcal B$. 
For any mapping $\mathcal K \in \mathcal{L}(\mathcal A, \mathcal B)$, we denote its operator norm as $\nrm[\mathcal{L}(\mathcal A, \mathcal B)]{\mathcal K}$. 
We also use $C$ and its variants, such as $C_D$, $C_{D,f}$, to denote some generic constants whose particular values may change line by line. 
For two quantities, we write $\mathcal{P}\lesssim \mathcal{Q}$ to signify $\mathcal{P}\leq C \mathcal{Q}$ and $\mathcal{P} \simeq \mathcal{Q}$ to signify $\widetilde{C}\mathcal{Q}\leq \mathcal{P} \leq C \mathcal{Q}$, for some generic positive constants $C$ and $\widetilde{C}$.
We write ``almost everywhere'' as~``a.e.''~and ``almost surely'' as~``a.s.''~for short. 
We use $|\mathcal S|$ to denote the Lebesgue measure of any Lebesgue-measurable set $\mathcal S$.

The Fourier transform and inverse Fourier transform of a function $\varphi$ are respectively defined as
\begin{align*} 
& \calF \varphi (\xi) = \widehat \varphi(\xi) := (2\pi)^{-n/2} \int e^{-{\textrm{i}}x\cdot \xi} \varphi(x) \dif x, \\
& \calF^{-1} \varphi (\xi) := (2\pi)^{-n/2} \int e^{{\textrm{i}}x\cdot \xi} \varphi(x) \dif x.
\end{align*}
Set
\[
\Phi(x,y) = \Phi_k(x,y) := \frac {e^{{\textrm{i}}k|x-y|}}{4\pi|x-y|}, \quad x\in\mathbb{R}^3\backslash\{y\}.
\]
$\Phi_k$ is the outgoing fundamental solution, centered at $y$, to the differential operator $-\Delta-k^2$. Define the resolvent operator $\Rk$,
\begin{equation} \label{eq:DefnRk-MLmGWsSchroEqu2019}
(\Rk \varphi)(x) := \int_{\R^3} \Phi_k(x,y) \varphi(y) \dif{y}, \quad x \in \R^3,
\end{equation}
where $\varphi$ can be any measurable function on $\mathbb{R}^3$ as long as \eqref{eq:DefnRk-MLmGWsSchroEqu2019} is well-defined for almost all $x$ in $\R^3$. 

Write $\agl[x] := (1+|x|^2)^{1/2}$ for $x \in \Rn$, $n\geq 1$. We introduce the following weighted $L^p$-norm and the corresponding function space over $\Rn$ for any $\delta \in \R$,
\begin{equation} \label{eq:WetdSpace-MLmGWsSchroEqu2019}
\begin{aligned}
\nrm[L_\delta^p(\Rn)]{\varphi} := \ & \nrm[L^p(\Rn)]{\agl[\cdot]^{\delta} \varphi(\cdot)} = \big( \int_{\Rn} \agl[x]^{p\delta} |\varphi|^p \dif{x} \big)^{\frac 1 p}, \\
L_\delta^p(\Rn) := \ & \{\, \varphi \in L_{loc}^1(\Rn) \,;\, \nrm[L_\delta^p(\Rn)]{\varphi} < +\infty \,\}.
\end{aligned}
\end{equation}
We also define $L_\delta^p(S)$ for any subset $S$ in $\Rn$ by replacing $\Rn$ in \eqref{eq:WetdSpace-MLmGWsSchroEqu2019} with $S$. 
In what follows, we may write $L_\delta^2(\R^3)$ as $L_\delta^2$ for short without ambiguities.
Let $I$ be the identity operator and define 
\begin{equation*}
\nrm[H_\delta^{s,p}(\Rn)]{f} := \nrm[L_\delta^p(\Rn)]{(I-\Delta)^{s/2} f}, \ H_\delta^{s,p}(\Rn) = \{ f \in \scrS'(\Rn); \nrm[H_\delta^{s,p}(\Rn)]{f} < +\infty\},
\end{equation*}
where $\scrS'(\Rn)$ stands for the dual space of the Schwartz space $\scrS(\Rn)$.
The space $H_\delta^{s,2}(\Rn)$ is abbreviated as $H_\delta^s(\Rn)$, and $H_0^{s,p}(\Rn)$ is abbreviated as $H^{s,p}(\Rn)$.
It can be verified that
\begin{equation} \label{eq:normEquiv-MLmGWsSchroEqu2019}
\nrm[H_\delta^s(\Rn)]{f} = \nrm[H^\delta(\Rn)]{\agl[\cdot]^s \widehat f(\cdot)}.
\end{equation}

Let $m \in (-\infty,+\infty)$. We define $S^m$ to be the set of all functions $\sigma(x,\xi) \in C^{\infty}(\Rn,\Rn;\mathbb C)$ such that for any two multi-indices $\alpha$ and $\beta$, there is a positive constant $C_{\alpha, \beta}$, depending on $\alpha$ and $\beta$ only, for which
\[
\big| (D_{x}^{\alpha}D_{\xi}^{\beta}\sigma)(x,\xi) \big| \leq C_{\alpha, \beta}(1+|\xi|)^{m-|\beta|}, \quad \forall x, \xi \in \Rn.
\]
We call any function $\sigma$ in $\bigcup_{m \in \R} S^m$ a \emph{symbol}.
A \emph{principal symbol} of $\sigma$ is an equivalent class $[\sigma] = \{ \tilde \sigma \in S^m \,;\, \sigma - \tilde \sigma \in S^{m-1} \}$. 
In what follows, we may use one representative $\tilde \sigma$ in $[\sigma]$ to represent the equivalent class $[\sigma]$.
Let $\sigma$ be a symbol. Then the \emph{pseudo-differential operator} $T$, defined on $\scrS(\Rn)$ and associated with $\sigma$, is defined by
\begin{align*}
	(T_{\sigma}\varphi)(x)
	& := (2\pi)^{-n/2} \int_{\Rn} e^{{\textrm{i}}x \cdot \xi} \sigma(x,\xi) \hat{\varphi}(\xi) \dif{\xi} \\
	& \ = (2\pi)^{-n} \iint_{\Rn \times \Rn} e^{{\textrm{i}}(x-y) \cdot \xi} \sigma(x,\xi) \varphi(y) \dif y \dif{\xi}, \quad \forall \varphi \in \scrS(\Rn).
\end{align*}

\smallskip

Recall Definition \ref{defn:migr-RSRe2020}.
Lemma \ref{lem:migrRegu-MLmGWsSchroEqu2019} below shows how the rough order of a migr field is related to its Sobolev regularity.

\begin{lem} \label{lem:migrRegu-MLmGWsSchroEqu2019}
	Let $h$ be an migr distribution of rough order $-m$ in $D_h$. Then, $h \in H^{-s,p}(\Rn)$ almost surely for any $1 < p < +\infty$ and $s > (n-m)/2$.
\end{lem}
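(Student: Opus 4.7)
The plan is to prove $\mathbb E\bigl[\nrm[L^p(\Rn)]{(I-\Delta)^{-s/2}h}^p\bigr] < +\infty$ for each fixed $p$ and $s$ in the hypothesized range; by Tonelli this forces $(I-\Delta)^{-s/2}h \in L^p(\Rn)$ almost surely, which by definition is $h \in H^{-s,p}(\Rn)$ a.s. The first step would be to reduce to the centered case by splitting $h = \tilde h + \mathbb E(h)$; since $\mathbb E(h) \in \calC_c^\infty(\Rn)$ by item (1) of Definition \ref{defn:migr-RSRe2020}, it already belongs to every $H^{-s,p}(\Rn)$, and only the centered Gaussian part $\tilde h$ requires work.

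Next, set $T_s := (I-\Delta)^{-s/2}$, whose convolution kernel is the Bessel potential $G_s$, and put $v := T_s \tilde h$. For each $x \in \Rn$, the value $v(x) = \agl[\tilde h, G_s(x-\cdot)]$ is (after a standard mollification argument, needed because $G_s$ is singular at the origin when $s < n$) a complex Gaussian random variable whose variance is the on-diagonal value of the Schwartz kernel of a composition of classical $\Psi$DOs,
\[
\sigma^2(x) := \mathbb E |v(x)|^2 = \bigl[T_s\, \mathfrak C_{\tilde h}\, T_s^*\bigr](x,x).
\]
By the symbolic calculus this composition has order $-m-2s$, and the hypothesis $s > (n-m)/2$ gives $-m-2s < -n$; for classical $\Psi$DOs of order strictly less than $-n$ the Schwartz kernel is continuous, so $\sigma^2$ is bounded on compact sets. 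Coupling this with the Gaussian moment bound $\mathbb E|v(x)|^p \lesssim_p \sigma(x)^p$ controls $\mathbb E|v(x)|^p$ uniformly on every bounded region.

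For integrability at infinity, I would exploit that $\tilde h$ is supported in $D_h$ almost surely and that $G_s$ decays faster than any polynomial away from the origin. When $\dist(x, D_h)$ is large, $G_s(x-\cdot)$ is small on $D_h$, and the quadratic form $\agl[\mathfrak C_{\tilde h}\,G_s(x-\cdot), G_s(x-\cdot)]$ (effectively supported in $D_h \times D_h$) decays rapidly. Integrating $\sigma(x)^p$ over a bounded neighborhood of $D_h$ (where it is bounded) and over its complement (where it is rapidly decaying) yields a finite total, closing the argument by Tonelli.

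The main obstacle I anticipate is the symbolic-calculus step: cleanly verifying that $T_s\,\mathfrak C_{\tilde h}\,T_s^*$ is a classical $\Psi$DO of order $-m-2s$ with a continuous on-diagonal kernel, and extracting the pointwise bound. A further technical nuisance is that the representative principal symbol $\mu(x)|\xi|^{-m}$ is not smooth at $\xi = 0$, which forces a low-frequency cutoff whose lower-order contribution must be absorbed into a smoothing remainder before the composition formula can be applied.
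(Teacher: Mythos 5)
The paper supplies no proof of its own here; it simply cites Proposition~2.4 of \cite{caro2016inverse}, and your outline reproduces that argument in essence: reduce to the centered part, identify $\sigma^2(x)=\mathbb E|(I-\Delta)^{-s/2}\tilde h(x)|^2$ as the diagonal of the Schwartz kernel of the $\Psi$DO $T_s\,\mathfrak C_{\tilde h}\,T_s$ of order $-m-2s<-n$ (hence continuous and, by compact support of $K_{\tilde h}$ together with rapid decay of $G_s$, integrable), then invoke Gaussian moment equivalence and Tonelli. Your flagged technicalities (mollifying the singular Bessel kernel to make the pairing literal, and cutting off the symbol near $\xi=0$ before composing) are exactly the points that must be discharged in the cited proof, so the approach is the same rather than a genuine alternative.
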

\begin{proof}[Proof of Lemma \ref{lem:migrRegu-MLmGWsSchroEqu2019}]

See Proposition 2.4 in \cite{caro2016inverse}.
\end{proof}

By the Schwartz kernel theorem (see Theorem 5.2.1 in \cite{hormander1985analysisI}), there exists a kernel $K_h(x,y)$ with $\supp K_h \subset D_h \times D_h$ such that
\begin{equation} \label{eq:CK-MLmGWsSchroEqu2019}
(\mathfrak C_h \varphi)(\psi) 
= \mathbb E_\omega ( \agl[\overline{h(\cdot,\omega)},\varphi] \agl[h(\cdot,\omega),\psi]) 
= \iint K_h(x,y) \varphi(x) \psi(y) \dif x \dif y,
\end{equation}
for all $\varphi$, $\psi \in \scrS(\Rn)$.
It is easy to verify that $K_h(x,y) = \overline{K_h(y,x)}$.
Denote the symbol of $\mathfrak C_h$ as $c_h$, then it can be verified \cite{caro2016inverse} that the equalities
\begin{subequations} \label{eq:KandSymbol-MLmGWsSchroEqu2019}
	\begin{numcases}{}
	K_h(x,y) = (2\pi)^{-n} \int e^{{\textrm{i}}(x-y) \cdot \xi} c_h(x,\xi) \dif \xi, \label{eq:KtoSymbol-MLmGWsSchroEqu2019} \\
	c_h(x,\xi) = \int e^{-{\textrm{i}}\xi\cdot(x-y)} K_h(x,y) \dif x, \label{eq:SymboltoK-MLmGWsSchroEqu2019}
	\end{numcases}
\end{subequations}
hold in the distributional sense, and the integrals in \eqref{eq:KandSymbol-MLmGWsSchroEqu2019} shall be understood as oscillatory integrals.
{Despite} the fact that $h$ usually is not a function, {intuitively speaking, however,} it is helpful to keep in mind the following correspondence,
\[
K_h(x,y) \sim \mathbb E_\omega \big( \overline{h(x,\omega)} h(y,\omega) \big).
\]

\subsection{Some techniques related to the direct problem} \label{subsec:DP-RSRe2020}

On way to study the direct problem of \eqref{eq:1a-RSRe2020}-\eqref{eq:1c-RSRe2020} is to transform it into the Lippmann-Schwinger equation, and then use the Bonn expansion to define the solution.
To that end, the estimate of the operator norm of the resolvent $\Rk$ is crucial.
Among different types of the estimates in the literature, one of them is known as Agmon's estimate (cf.~\cite[\S 29]{eskin2011lectures}).
Reformulating \eqref{eq:1a-RSRe2020} into the Lippmann-Schwinger equation formally (cf. \cite{colton2012inverse}), we obtain
\begin{equation*} 
(I - \Rk q) u^{sc} = \alpha \Rk q u^{in} - \Rk f.
\end{equation*}

We demonstrate two lemmas dealing with the lack of regularity when utilizing Agmon's estimates.
Lemma \ref{lemma:RkfBdd-RSRe2020} (cf.~\cite[Lemma 2.2]{ma2020determining}) shows the resolvent can take a migr field as an input without any trouble,
while Lemma \ref{lem:RkBounded-RSRe2020} (cf.~\cite[Theorem 2.1]{ma2019determining}) gives a variation of Agmon's estimate to fit our own problem settings.

\begin{lem} \label{lemma:RkfBdd-RSRe2020}
Assume $f$ is a migr field with rough order $-m$ and $\supp f \subset D_f$ almost surely, then we have $\Rk f \in L_{-1/2-\epsilon}^2$ for any $\epsilon > 0$ almost surely.
\end{lem}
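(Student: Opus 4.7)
The plan is to combine the Sobolev regularity of $f$ provided by Lemma \ref{lem:migrRegu-MLmGWsSchroEqu2019} with a splitting of the resolvent action into near-diagonal and tail pieces, exploiting that $f$ is compactly supported in $D_f$.

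By Lemma \ref{lem:migrRegu-MLmGWsSchroEqu2019} (with $p=2$), almost surely $f \in H^{-s}(\R^3)$ for every $s > (3-m)/2$, and since $\supp f \subset D_f$ is compact we also have $f \in H_\delta^{-s}(\R^3)$ for every $\delta \in \R$. Fix $s$ with $(3-m)/2 < s < 2$ (possible whenever $m > -1$, which covers the migr orders considered in this paper). Choose $R > 0$ so that $D_f \subset B(0,R)$.

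For the tail $|x|>R$, the function $y \mapsto \Phi_k(x-y)$ lies in $C^\infty(D_f)$ with the uniform bounds $|\partial_y^\alpha \Phi_k(x-y)| \lesssim \langle x \rangle^{-1}$ for $y \in D_f$ and $|x|$ sufficiently large. Interpreting $(\Rk f)(x) = \langle f, \Phi_k(x-\cdot) \rangle$ as the duality pairing between $H^{-s}(D_f)$ and $H^s(D_f)$ gives the pointwise tail bound $|(\Rk f)(x)| \lesssim \langle x \rangle^{-1} \nrm[H^{-s}]{f}$ almost surely, so that
\[
\int_{|x|>R} \langle x \rangle^{-1-2\epsilon} |(\Rk f)(x)|^2 \dif x < +\infty \quad \text{a.s.}
\]
For the local piece on $B(0,R)$, split $\Phi_k = \chi \Phi_k + (1-\chi)\Phi_k$ with $\chi$ a smooth cutoff of a neighborhood of the origin. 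The convolution against $\chi \Phi_k$ is, up to a smoothing remainder, a classical pseudodifferential operator of order $-2$ and therefore maps $H^{-s}_{\mathrm{comp}} \to H^{2-s}_{\mathrm{loc}} \subset L^2_{\mathrm{loc}}$ since $s<2$; the convolution against $(1-\chi)\Phi_k$ has a smooth kernel on all of $\R^3$ and is controlled by the same duality estimate used for the tail. Combining the two regions yields $\Rk f \in L_{-1/2-\epsilon}^2(\R^3)$ almost surely.

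The main obstacle is justifying the local $-2$-order mapping property rigorously, because the outgoing resolvent $\Rk$ lies outside the standard H\"ormander pseudodifferential calculus due to the $-\mathrm{i}0$ prescription at the characteristic set $|\xi|=k$. This is bypassed by working directly with the kernel $\Phi_k$ and using that, near the origin, it coincides with the Newton kernel $(4\pi|x|)^{-1}$ up to multiplication by the smooth factor $e^{\mathrm{i}k|x|}$; the standard convolution-Sobolev estimate for the Newton kernel provides the required $H^{-s}_{\mathrm{comp}} \to H^{2-s}_{\mathrm{loc}}$ bound, and the smooth correction $e^{\mathrm{i}k|x|}-1$ in the singular part produces a kernel that is absorbed into the tail analysis.
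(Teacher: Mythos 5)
Your proof is correct in outline, but it takes a genuinely different route from the paper. The paper's proof does not pass through the pathwise Sobolev regularity of $f$ at all: it splits $\Rk f = \Rk(\mathbb E f) + \Rk(f - \mathbb E f)$, invokes a prior result for the deterministic mean, and then directly computes the second moment $\mathbb E\bigl(\nrm[L^2_{-1/2-\epsilon}]{\Rk(f-\mathbb E f)}^2\bigr)$ using the covariance kernel $K_f$ and the symbol $c_f$, bounding it by a constant $C_f$ that is \emph{independent of} $k$. Almost-sure finiteness of the norm then follows from finiteness of the expectation. Your approach instead combines the a.s.\ regularity $f\in H^{-s}(\R^3)$ from Lemma \ref{lem:migrRegu-MLmGWsSchroEqu2019} with a deterministic mapping property of $\Rk$ built by hand (tail decay plus a near-diagonal Fourier-multiplier-of-order-$-2$ estimate for $\chi\Phi_k$). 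Two observations worth making: first, in the regime $m>2$ actually used in this paper one has $(3-m)/2 < 1/2$, so you could replace the entire local/near-diagonal analysis by a direct appeal to Lemma \ref{lem:RkBounded-RSRe2020}, which already gives $\Rk\colon H^{-s}_{1/2+\epsilon}\to H^{s}_{-1/2-\epsilon}\subset L^2_{-1/2-\epsilon}$ for $0<s<1/2$; this would make the argument considerably shorter and avoid the slightly delicate justification that $\chi\Phi_k$ is an order-$-2$ convolution operator (the factor $e^{ik|x|}$ is only Lipschitz at the origin, so the claim needs the Fourier-decay argument you gesture at rather than a literal smoothness statement). Second, and more substantively, your pathwise argument yields only an a.s.\ qualitative statement with a random, $k$-dependent constant, whereas the paper's second-moment computation yields the quantitative bound $\mathbb E\,\nrm[L^2_{-1/2-\epsilon}]{\Rk f} \le C_f^{1/2}$ with $C_f$ uniform in $k$; that uniformity is stated explicitly in the paper's proof and is the kind of estimate that feeds into the subsequent asymptotic analysis of the Born series, so if you were to substitute your proof you would lose information that the later sections rely on. Your approach does, on the other hand, cover a somewhat wider range of rough orders $m$ in principle, since it only needs $s<2$ rather than $s<1/2$.
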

	
	\begin{proof}
We split $\Rk f$ into two parts, $\Rk ( \mathbb{E}f)$ and $\Rk (f - \mathbb{E}f)$.
\cite[Lemma 2.1]{llm2018random} gives $\Rk ( \mathbb{E}f) \in L_{-1/2-\epsilon}^2$.	
For $\Rk (f - \mathbb{E}f)$, by using \eqref{eq:CK-MLmGWsSchroEqu2019}, \eqref{eq:KandSymbol-MLmGWsSchroEqu2019} and \eqref{eq:DefnRk-MLmGWsSchroEqu2019}, one can compute
\begin{align}
& \mathbb{E} ( \nrm[L_{-1/2-\epsilon}^2]{\Rk (f - \mathbb{E}f)(\cdot,\omega)}^2 ) \nonumber\\
= & \int_{\R^3} \agl[x]^{-1-2\epsilon} \mathbb{E} ( \agl[\overline {f - \mathbb{E}f}, \Phi_{-k,x}] \agl[f - \mathbb{E}f, \Phi_{k,x}] ) \dif{x} = \int_{\R^3} \agl[x]^{-1-2\epsilon} \agl[\mathfrak C_f \Phi_{-k,x}, \Phi_{k,x}] \dif{x} \nonumber\\
{\color{black}\simeq }& \int \agl[x]^{-1-2\epsilon} \int_{D_f} \big( \int_{D_f} \frac{\mathcal I(y,z) e^{-ik|x-z|}}{|x-z| \cdot |y-z|^2} \dif{z} \big) \cdot \frac{e^{ik|x-y|}}{|x-y|} \dif{y} \dif{x}, \label{eq:gkInter1-RSRe2020}
\end{align}
where $c_f(y,\xi)$ is the symbol of the covariance operator $\mathfrak C_f$ and
\[
\mathcal I(y,z) := \int_{\R^3} |y-z|^2 e^{i(y-z) \cdot \xi} c_f(y,\xi) \dif{\xi}.
\]
When $y = z$, we know $\mathcal I(y,z) = 0$ because the integrand is zero. Thanks to the condition $m > 2$, when $y \neq z$ we have
\begin{align}
	|\mathcal I(y,z)| & 
	= \big| \sum_{j=1}^3 \int_{\R^3} e^{i(y-z) \cdot \xi} (\partial_{\xi_j}^2 c_f)(y,\xi) \dif{\xi} \big| 
	\sum_{j=1}^3 \int_{\R^3} C_j \agl[\xi]^{-m-2} \dif{\xi} \leq C_0 < +\infty, \label{eq:gkInter2-MLSchroEqu2018}
\end{align}
for some constant $C_0$ independent of $y$ and $z$.
Note that if $D_f$ is bounded, then for $j=1,2$ we have
\begin{equation} \label{eq:gkInter3-MLSchroEqu2018}
	\int_{D_f} |x-y|^{-j} \dif{y} \leq C_{f,j} \agl[x]^{-j}, \quad \forall x \in \R^3,
\end{equation}
for some constant $C_{f,j}$ depending only on $f, j$ and the dimension.
The notation  $\agl[x]$ in \eqref{eq:gkInter3-MLSchroEqu2018} stands for $(1 + |x|^2)^{1/2}$ and readers may note the difference between the $\agl[\cdot]$ and the $\agl[\cdot,\cdot]$ appeared in \eqref{eq:DefnRk-MLmGWsSchroEqu2019}.
With the help of \eqref{eq:gkInter2-MLSchroEqu2018} and \eqref{eq:gkInter3-MLSchroEqu2018} and H\"older's inequality, we can continue \eqref{eq:gkInter1-RSRe2020} as
\begin{align*}
& \mathbb{E} ( \nrm[L_{-1/2-\epsilon}^2]{\Rk (f - \mathbb{E}f)(\cdot,\omega)}^2 ) \nonumber\\
\lesssim & \int \agl[x]^{-1-2\epsilon} \big( \iint_{D_f \times D_f} (|x-z| \cdot |y-z|^2 \cdot |x-y|)^{-1} \dif{z} \dif{y} \big) \dif{x} \nonumber\\
\leq & \int \agl[x]^{-1-2\epsilon} C_f \agl[x]^{-2} \dif{x}  \leq C_f < +\infty,
\end{align*}
which gives
\begin{equation} \label{eq:RkfBounded-MLSchroEqu2018}
\mathbb{E} ( \nrm[L_{-1/2-\epsilon}^2]{\Rk (f - \mathbb{E}f)(\cdot,\omega)}^2 ) \leq C_f < +\infty.
\end{equation}
By using the H\"older inequality applied to the probability measure, we obtain from \eqref{eq:RkfBounded-MLSchroEqu2018} that
\begin{equation} \label{eq:RkfBoundedCD-MLSchroEqu2018}
\mathbb{E} \nrm[L_{-1/2-\epsilon}^2]{\Rk (f - \mathbb{E}f)} \leq [ \mathbb{E} ( \nrm[L_{-1/2-\epsilon}^2]{\Rk (f - \mathbb{E}f)}^2 ) ]^{1/2} \leq C_f^{1/2} < +\infty,
\end{equation}
for some constant $C_f$ independent of $k$. The formula (\ref{eq:RkfBoundedCD-MLSchroEqu2018}) gives that
\(
\Rk (f - \mathbb{E} f) \in L_{-1/2-\epsilon}^2
\)
almost surely, and hence
\(
\Rk f \in L_{-1/2-\epsilon}^2
\)
almost surely.
	
The proof is complete. 
	\end{proof}

\begin{lem} \label{lem:RkBounded-RSRe2020}
	For any $0 < s < 1/2$ and $\epsilon > 0$, when $k > 2$,
	$$\nrm[H_{-1/ 2 - \epsilon}^s(\R^3)]{\Rk \varphi} \leq C_{\epsilon,s} k^{-(1 - 2s)} \nrm[H_{1/2 + \epsilon}^{-s}(\R^3)]{\varphi}, \quad \varphi \in H_{1/2 + \epsilon}^{-s}(\R^3).$$
\end{lem}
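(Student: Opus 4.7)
The plan is to derive this weighted resolvent estimate by complex interpolation between two endpoint estimates, both rooted in the classical Agmon resolvent bound.

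First, I would take as input Agmon's weighted resolvent estimate (see, e.g., \cite{eskin2011lectures}): for every $\epsilon > 0$ and $k > 1$,
\[
\nrm[L^2_{-1/2-\epsilon}(\R^3)]{\Rk \psi} \leq C_\epsilon\, k^{-1} \nrm[L^2_{1/2+\epsilon}(\R^3)]{\psi}.
\]
This is already the $s = 0$ case of the lemma. To produce a second endpoint at the opposite end, I would exploit the resolvent identity $(-\Delta - k^2)\Rk = I$, rewritten as $(I - \Delta)\Rk \psi = \psi + (1 + k^2)\Rk \psi$. Taking the $L^2_{-1/2-\epsilon}$ norm and inserting Agmon's bound on the term $(1+k^2)\Rk \psi$, together with the trivial inequality $\nrm[L^2_{-1/2-\epsilon}]{\psi} \leq \nrm[L^2_{1/2+\epsilon}]{\psi}$, yields for $k > 2$
\[
\nrm[H^2_{-1/2-\epsilon}]{\Rk \psi} = \nrm[L^2_{-1/2-\epsilon}]{(I-\Delta)\Rk \psi} \leq C_\epsilon\, k \nrm[L^2_{1/2+\epsilon}]{\psi},
\]
i.e.\ $\Rk \colon L^2_{1/2+\epsilon} \to H^2_{-1/2-\epsilon}$ has operator norm $\lesssim k$.

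Next, I would recast the target inequality to match these endpoints. Since $\Rk$ is the Fourier multiplier with symbol $(|\xi|^2 - k^2 - \mathrm{i}\,0)^{-1}$, it commutes with the Bessel potential $(I-\Delta)^{s/2}$. Substituting $\tilde\varphi := (I-\Delta)^{-s/2}\varphi$ and using $\nrm[H^{-s}_{1/2+\epsilon}]{\varphi} = \nrm[L^2_{1/2+\epsilon}]{\tilde\varphi}$ together with $(I-\Delta)^{s/2}\Rk(I-\Delta)^{s/2} = (I-\Delta)^s \Rk$, the claim is equivalent to
\[
\nrm[H^{2s}_{-1/2-\epsilon}]{\Rk \tilde\varphi} \leq C_{\epsilon,s}\, k^{2s-1} \nrm[L^2_{1/2+\epsilon}]{\tilde\varphi}, \qquad 0 < s < 1/2.
\]

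Finally, I would complex-interpolate the bounded linear map $\Rk$ between the two endpoints $L^2_{1/2+\epsilon} \to L^2_{-1/2-\epsilon}$ (norm $\lesssim k^{-1}$) and $L^2_{1/2+\epsilon} \to H^2_{-1/2-\epsilon}$ (norm $\lesssim k$). The Calder\'on identity $[L^2_{-1/2-\epsilon}, H^2_{-1/2-\epsilon}]_\theta = H^{2\theta}_{-1/2-\epsilon}$ with $\theta = s$ produces the intermediate bound $(k^{-1})^{1-s}(k)^{s} = k^{2s-1}$, which is exactly what the reformulated claim requires for $2s \in (0, 1)$. The main technical hurdle is ensuring that the complex-interpolation constant on this weighted Sobolev scale is independent of $k$; a clean route is Stein's interpolation of the analytic family $T_z := (I-\Delta)^{z/2} \Rk$ on the strip $0 \leq \operatorname{Re} z \leq 2$, with the two vertical-line bounds supplied by the estimates above. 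The restriction $s < 1/2$ is natural here: it keeps the target Sobolev index $2s$ strictly below the critical trace threshold $1$, which is precisely the threshold that also enters the original Agmon argument through the Kato-Kuroda trace lemma on the sphere $|\xi| = k$.
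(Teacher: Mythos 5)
Your proposal is correct, and it takes a genuinely different route from the paper's own proof. The paper establishes the estimate from scratch via the limiting absorption principle: it regularizes $\mathcal R_k$ to $\mathcal R_{k,\tau}$, writes the pairing $(\mathcal R_{k,\tau}\varphi,\psi)_{L^2}$ in Fourier variables, splits the region into three pieces relative to the sphere $|\xi|=k$ (away from the sphere via Young's inequality; on the sphere via a contour deformation and Cauchy's theorem; near the sphere via a H\"older-modulus trace estimate), and then passes $\tau\to 0^+$. You instead black-box the classical Agmon bound $\nrm[L^2_{-1/2-\epsilon}]{\mathcal R_k\psi}\lesssim k^{-1}\nrm[L^2_{1/2+\epsilon}]{\psi}$ as the $s=0$ endpoint, bootstrap the $H^2$ endpoint $\nrm[H^2_{-1/2-\epsilon}]{\mathcal R_k\psi}\lesssim k\nrm[L^2_{1/2+\epsilon}]{\psi}$ from the resolvent identity $(I-\Delta)\mathcal R_k=I+(1+k^2)\mathcal R_k$, use the commutation of the Fourier multipliers $\mathcal R_k$ and $(I-\Delta)^{s/2}$ to recast the claim, and interpolate. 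Your reformulation $\nrm[H^{2s}_{-1/2-\epsilon}]{\mathcal R_k\tilde\varphi}\lesssim k^{2s-1}\nrm[L^2_{1/2+\epsilon}]{\tilde\varphi}$ is correct, and the Stein interpolation of the analytic family $T_z=(I-\Delta)^{z/2}\mathcal R_k$ on the strip $0\le\mathrm{Re}\,z\le 2$ is the right way to sidestep the weighted-scale identification and obtain a $k$-uniform constant, since the multipliers $(I-\Delta)^{it/2}$ are bounded on $L^2_\delta$ with at most polynomial growth in $t$, which the three-lines argument tolerates. What each approach buys: the paper is self-contained and does not presuppose Agmon's theorem, and its Fourier-sphere decomposition produces an explicitly $k$-uniform constant with no interpolation machinery; your argument is shorter and conceptually cleaner, and in fact it proves the estimate for the full range $0<s<1$, not merely $0<s<1/2$, because both endpoints $s=0$ and $s=1$ are available. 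One small misreading on your part: the restriction $s<1/2$ is not forced by a trace threshold in your own argument; it is an artifact of the paper's parametrization $s=1/(2p)$ with $p>1$ and plays no role once you interpolate between $L^2$ and $H^2$.
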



\begin{proof}
In this proof we adopt the concept of {\it Limiting absorption principle} to first show desired results on a family of operator $\mathcal{R}_{k,\tau}$ controlled by a parameter $\tau$, and then show that $\mathcal{R}_{k,\tau}$ converges in a proper sense as $\tau$ approaches zero.
We sketch out the key steps in the proof and readers may refer to the proof of \cite[Theorem 2.1]{ma2019determining} for complete details.
	
	Define an operator	\begin{equation}\label{eq:opnnna1}
\mathcal{R}_{k,\tau} \varphi(x) := (2\pi)^{-3/2} \int_{\R^3} e^{{\textrm{i}}x\cdot \xi} \frac {\hat \varphi(\xi)} {|\xi|^2 - k^2 - \textrm{i}\tau} \dif \xi,
	\end{equation}
	where $\tau\in\mathbb{R}_+$ and $\textrm{i} := \sqrt{-1}$. 
Fix a function $\chi$ satisfying
	\begin{equation} \label{eq:cutoffFunc-MLmGWsSchroEqu2019}
	\left\{\begin{aligned} 
	& \chi \in C_c^\infty(\Rn),\, 0 \leq \chi \leq 1, \\
	& \chi(x) = 1 \mbox{ when } |x| \leq 1, \\
	& \chi(x) = 0 \mbox{ when } |x| \geq 2.
	\end{aligned}\right.
	\end{equation}
	Write $\mathfrak{R} \psi(x) := \psi(-x)$. 
	We have
	\begin{align}
	& \ (\mathcal{R}_{k,\tau} \varphi,\psi)_{L^2(\R^3)} \nonumber\\
	= \ & \ \int_{\R^3} \mathcal{R}_{k,\tau} \varphi (x) \overline{\psi(x)} \dif x = \int_{\R^3} \calF\{\mathcal{R}_{k,\tau} \varphi\} (\xi) \cdot \calF\{\mathfrak{R} \overline \psi\}(\xi) \dif \xi \nonumber\\
	= \ & \ \int_0^\infty \frac {(1 - \chi^2(r - k))} {r^2 - k^2 - {\textrm{i}}\tau} \dif r \cdot \int_{|\xi| = r} \hat \varphi (\xi) \cdot \widehat{\mathfrak{R} \overline \psi}(\xi) \dif{S(\xi)} \nonumber\\
	& \ + \int_0^\infty \frac {\agl[r]^{1/p}\, r^2 \chi^2(r - k)} {r^2 - k^2 - {\textrm{i}}\tau} \dif r \times \int_{\mathbb{S}^2} [\agl[k]^{\frac {-1} {2p}} \hat \varphi (k\omega)] [\agl[k]^{\frac {-1} {2p}} \widehat{\mathfrak{R} \overline \psi}(k\omega)] \dif{S(\omega)} \nonumber\\
	& \ + \int_0^\infty \frac {\agl[r]^{1/p}\, r^2 \chi^2(r - k)} {r^2 - k^2 - {\textrm{i}}\tau} \dif r \cdot \int_{\mathbb{S}^2} \{ [\agl[r]^{\frac {-1} {2p}} \hat \varphi (r\omega)] [\agl[r]^{\frac {-1} {2p}} \widehat{\mathfrak{R} \overline \psi}(r\omega)] \nonumber\\
	& \hspace*{33ex} - [\agl[k]^{\frac {-1} {2p}} \hat \varphi (k\omega)] [\agl[k]^{\frac {-1} {2p}} \widehat{\mathfrak{R} \overline \psi}(k\omega)] \} \dif{S(\omega)} \nonumber\\
	=: & \ I_1(\tau) + I_2(\tau) + I_3(\tau). \label{eq:RkBoundedI123-MLmGWsSchroEqu2019}
	\end{align}
Here we divide $(\mathcal{R}_{k,\tau} \varphi,\psi)_{L^2(\R^3)}$ into three parts in order to deal with the singularity happened in the integral when $|\xi|$ is close to $k$.
The integral in  $I_1$ has avoided this singularity by the cutoff function $\chi$.
The singularity in $I_2$ is only contained in the integration w.r.t.~$r$, and it can be shown that by using Cauchy's integral theorem and choosing a proper integral path w.r.t.~$r$, the norm of the denominator $\tau^2 - k^2 - i\tau$ can always be bounded below by $k$, e.g.~$|\tau^2 - k^2 - i\tau| \gtrsim k$.
The singularity in  $I_3$ is compensated by the difference $[\cdots]$ inside the integration $\int_{\mathbb S^2} [\cdots] \dif{S(\omega)}$.
In the following, we only show how to deal with $I_2$.

Now we estimate $I_1(\tau)$. By Young's inequality $ab \leq a^p/p + b^q/q$, for $a,b > 0,\, p,q > 1,\, 1/p + 1/q = 1$ we have
\begin{equation} \label{eq:YoungIneq-MLmGWsSchroEqu2019}
	(p^{1/p} q^{1/q}) a^{1/p} b^{1/q} \leq a + b.
\end{equation}
Note that $|r - k| > 1$ in the support of the function $1 - \chi^2(r - k)$ and $|\widehat{\mathfrak{R} \overline \psi}(\xi)| = |\hat \psi(\xi)|$, one can compute
\begin{align}
	|I_1(\tau)|
	& \leq \int_0^\infty \frac {1 - \chi^2(r - k)} {1 \cdot p^{1/p} q^{1/q} (r+1)^{1/p} (k-1)^{1/q}} \dif r \cdot \int_{|\xi| = r} |\hat \varphi (\xi)| \cdot |\hat \psi(\xi)| \dif{S(\xi)} \quad (\text{by } \eqref{eq:YoungIneq-MLmGWsSchroEqu2019}) \nonumber\\
	& \leq C_p k^{1/p - 1} \nrm[H_\delta^{-1/(2p)}(\R^3)]{\varphi} \nrm[H_\delta^{-1/(2p)}(\R^3)]{\psi}, \label{eq:RkBoundedI1-MLmGWsSchroEqu2019}
\end{align}
where $1 < p < +\infty$ and $\delta > 0$ and the $C_p$ is independent of $\tau$.

\smallskip

We next estimate $I_2(\tau)$. One has
\begin{align}
	I_2(\tau)
	& = \int_{\mathbb{S}^2} [\agl[k]^{\frac {-1} {2p}} \hat \varphi (k\omega)] [\agl[k]^{\frac {-1} {2p}} \widehat{\mathfrak{R} \overline \psi}(k\omega)] \int_0^\infty \frac {\agl[r]^{\frac 1 p} r^2 \chi^2(r - k) \dif r} {r^2 - k^2 - {\textrm{i}}\tau} \dif{S(\omega)}. \label{eq:RkBoundedI2Inter1-MLmGWsSchroEqu2019}
	\end{align}
It can be shown that, by choosing a fixed $\tau_0 \in (0,1)$ carefully, we can show that the denominator $p_\tau(r) := r^2 - k^2 - \textrm{i} \tau$ could satisfy	\begin{equation} \label{eq:p-MLmGWsSchroEqu2019}
	|p_\tau(r)| \geq \tau_0 k \text{~and~} |r| \lesssim k, \ \forall r \in \{ r ; 2 \geq |r - k| \geq \tau_0 \} \cup \Gamma_{k,\tau_0},\, \forall \tau \in (0,\tau_0),
	\end{equation}
	where $\Gamma_{k,\tau_0} := \{r \in \mathbb C ; |r - k| = \tau_0, \Im r \leq 0 \}$.
	It is obvious that the purpose of \eqref{eq:p-MLmGWsSchroEqu2019} is to use Cauchy's integral theorem.
By combining \eqref{eq:p-MLmGWsSchroEqu2019} with Cauchy's integral theorem, we can continue	\eqref{eq:RkBoundedI2Inter1-MLmGWsSchroEqu2019} as
	\begin{align}
	|I_2(\tau)|
	& \leq \int_{|\xi|=k} \agl[\xi]^{\frac {-1} {2p}} |\hat \varphi (\xi)| \cdot \agl[\xi]^{\frac {-1} {2p}} |\hat \psi(\xi)| \big( \int_{\{ r \in \R_+ \,;\, 2 \geq |r - k| \geq \tau_0 \}} \frac {\agl[r]^{\frac 1 p} (r/k)^2} {\tau_0 k} \dif r \big) \dif{S(\xi)} \nonumber\\
	& \ \ \ + \int_{|\xi|=k} \agl[\xi]^{\frac {-1} {2p}} |\hat \varphi (\xi)| \cdot \agl[\xi]^{\frac {-1} {2p}} |\hat \psi(\xi)| \big( \int_{\Gamma_{k,\tau_0}} \frac {(1+|r|^2)^{\frac 1 {2p}} (|r|/k)^2} {\tau_0 k} \dif r \big) \dif{S(\xi)} \nonumber \\ 
& \leq C_{\tau_0} \int_{|\xi|=k} \agl[\xi]^{\frac {-1} {2p}} |\hat \varphi (\xi)| \agl[\xi]^{\frac {-1} {2p}} |\hat \psi(\xi)| \big( \int_{\Gamma_{k,\tau_0} \cup \{ r \in \R_+; 2 \geq |r - k| \geq \tau_0 \}} \frac {\agl[k]^{1/p}} {\tau_0 k} \dif r \big) \dif{S(\xi)} \nonumber\\
	& \ \ \ + C_{\tau_0} \int_{|\xi|=k} \agl[\xi]^{\frac {-1} {2p}} |\hat \varphi (\xi)| \agl[\xi]^{\frac {-1} {2p}} |\hat \psi(\xi)| \big( \int_{\Gamma_{k,\tau_0}} \frac {\agl[k]^{1/p}} {\tau_0 k} \dif r \big) \dif{S(\xi)} \nonumber\\
	& \leq C_{\tau_0} k^{1/p - 1} \big( \int_{|\xi|=k} |\agl[\xi]^{\frac {-1} {2p}} \hat h(\xi)|^2 \dif{S(\xi)} \big)^{\frac 1 2} \big( \int_{|\xi|=k} |\agl[\xi]^{\frac {-1} {2p}} \hat \psi(\xi)|^2 \dif{S(\xi)} \big)^{\frac 1 2} \nonumber\\
	& \leq C_{\tau_0,\epsilon} k^{1/p - 1} \nrm[H_{1/2 + \epsilon}^{-1/(2p)}(\R^3)]{\varphi} \nrm[H_{1/2 + \epsilon}^{-1/(2p)}(\R^3)]{\psi}, \label{eq:RkBoundedI2-MLmGWsSchroEqu2019}
	\end{align}
	where the constant $C_{\tau_0,\epsilon}$ is independent of $\tau$.
Here, in deriving the last inequality in \eqref{eq:RkBoundedI2-MLmGWsSchroEqu2019}, we have made use of \eqref{eq:normEquiv-MLmGWsSchroEqu2019}.

	\smallskip

	Finally, we estimate $I_3(\tau)$. Denote
	$\mathbb F(r\omega) = \mathbb F_r(\omega) := \agl[r]^{-1/(2p)} \hat \varphi(r\omega)$ and $\mathbb G(r\omega) = \mathbb G_r(\omega) := \agl[r]^{-1/(2p)} \widehat{\mathfrak R \bar \psi}(r\omega)$. One can compute
	\begin{align}
	|I_3(\tau)|
	& \leq \int_0^\infty \frac {\agl[r]^{1/p} \chi^2(r - k)} {|r^2 - k^2|} \cdot \nrm[L^2(\mathbb S_r^2)]{\mathbb F_r} \cdot \big( r^2 \int_{\mathbb{S}^2} |\mathbb G_r - \mathbb G_k|^2 \dif{S(\omega)} \big)^{\frac 1 2} \dif r \nonumber\\
	& \ \ \ + \int_0^\infty \frac {\agl[r]^{1/p} \chi^2(r - k)} {|r^2 - k^2|} \cdot \big( r^2 \int_{\mathbb{S}^2} |\mathbb F_r - \mathbb F_k|^2 \dif{S(\omega)} \big)^{\frac 1 2} \cdot \big(\frac r k \big)^2 \nrm[L^2(\mathbb S_k^2)]{\mathbb G_k} \dif r, \label{eq:RkBoundedI3.1-MLmGWsSchroEqu2019}
	\end{align}
	where $\mathbb S_r^2$ signifies the central sphere of radius $r$.
	Combining \cite[Remark 13.1 and (13.28)]{eskin2011lectures} and \eqref{eq:normEquiv-MLmGWsSchroEqu2019} and \eqref{eq:YoungIneq-MLmGWsSchroEqu2019}, we can continue \eqref{eq:RkBoundedI3.1-MLmGWsSchroEqu2019} as
	\begin{align}
	|I_3(\tau)|
	& \leq C_{\alpha,\epsilon} \int_0^\infty \frac {\agl[r]^{1/p} \chi^2(r - k)} {|r - k| (r+k)} \cdot \nrm[H^{1/2+\epsilon}(\R^3)]{\mathbb F} \cdot |r - k|^\alpha \cdot \nrm[H^{1/2+\epsilon}(\R^3)]{\mathbb G} \dif r \nonumber\\
	& \leq C_{\alpha,\epsilon,p} \int_0^\infty \frac {\agl[r]^{1/p} \chi^2(r - k)} {|r - k|^{1-\alpha} (r+1)^{1/p} (k-1)^{1-1/p}} \dif r \cdot \nrm[H^{1/2+\epsilon}(\R^3)]{\mathbb F} \nrm[H^{1/2+\epsilon}(\R^3)]{\mathbb G} \nonumber\\
	& \leq C_{\alpha,\epsilon,p} k^{1/p-1} \nrm[H_{1/2+\epsilon}^{-1/(2p)}(\R^3)]{\varphi} \cdot \nrm[H_{1/2+\epsilon}^{-1/(2p)}(\R^3)]{\psi}, \label{eq:RkBoundedI3-MLmGWsSchroEqu2019}
	\end{align}
	where the $\epsilon$ can be any positive real number and  the $\alpha$ satisfies $0 < \alpha < \epsilon$, and the constant $C_{\alpha,\epsilon,p}$ is independent of $\tau$.
	
	Combining \eqref{eq:RkBoundedI123-MLmGWsSchroEqu2019}, \eqref{eq:RkBoundedI1-MLmGWsSchroEqu2019}, \eqref{eq:RkBoundedI2-MLmGWsSchroEqu2019} and \eqref{eq:RkBoundedI3-MLmGWsSchroEqu2019}, we arrive at
	\begin{equation*}
	|(\mathcal{R}_{k,\tau} \varphi,\psi)_{L^2(\R^3)}| \leq |I_1(\tau)| + |I_2(\tau)| + |I_3(\tau)| 
	\leq C k^{1/p - 1} \nrm[H_{1/2 + \epsilon}^{-1/(2p)}(\R^3)]{\varphi} \nrm[H_{1/2 + \epsilon}^{-1/(2p)}(\R^3)]{\psi},
	\end{equation*}
	which implies that
	\begin{equation} \label{eq:RkeBoundedf-MLmGWsSchroEqu2019}
	\nrm[H_{-1/2 - \epsilon}^{1/(2p)}(\R^3)]{\mathcal{R}_{k,\tau} \varphi} 
	\leq C k^{1/p - 1} \nrm[H_{1/2 + \epsilon}^{-1/(2p)}(\R^3)]{\varphi}
	\end{equation}
	for some constant $C$ independent of $\tau$.

	\medskip

	Next we investigate the limiting case $\lim\limits_{\tau \to 0^+} \mathcal{R}_{k,\tau} \varphi$. 
Following similar steps when dealing 	with $I_1$, $I_2$ and $I_3$, it can be shown that for any $\tilde \tau > 0$, we have
\[
|I_j(\tau_1) - I_j(\tau_2)| \leq \tilde\tau^\beta k^{1/p - 1} \nrm[H_{1/2 + \epsilon}^{-1/(2p)}(\R^3)]{\varphi} \nrm[H_{1/2 + \epsilon}^{-1/(2p)}(\R^3)]{\psi},
\quad (j = 1,2,3)
\]
holds for $\forall \tau_1, \tau_2 \in (0,\tilde \tau)$.
Therefore, we can conclude	\begin{equation*} 
	\nrm[H_{-1/2 - \epsilon}^{-1/(2p)}(\R^3)]{\mathcal{R}_{k,\tau_1} \varphi - \mathcal{R}_{k,\tau_2} \varphi} 
	\lesssim \tilde \tau \nrm[H_{1/2 + \epsilon}^{-1/(2p)}(\R^3)]{\varphi}, \quad \forall \tau_1,\tau_2 \in (0,\tilde \tau),
	\end{equation*}
	and thus $\mathcal{R}_{k,\tilde \tau} \varphi$ converges and
	\begin{equation} \label{eq:LimAbsPrin2-MLmGWsSchroEqu2019}
	\lim_{\tilde \tau \to 0^+} \mathcal{R}_{k,\tilde \tau} \varphi = \Rk \varphi \quad\text{ in }\quad H_{-1/2 - \epsilon}^{1/(2p)}(\R^3).
	\end{equation}
	Hence from \eqref{eq:RkeBoundedf-MLmGWsSchroEqu2019} and \eqref{eq:LimAbsPrin2-MLmGWsSchroEqu2019} we conclude that
	$$\nrm[H_{-1/2 - \epsilon}^{1/(2p)}(\R^3)]{\Rk \varphi} \leq C_{\epsilon,p} k^{-(1 - 1/p)} \nrm[H_{1/2 + \epsilon}^{-1/(2p)}(\R^3)]{\varphi}$$
	holds for any $1 < p < +\infty$ and any $\epsilon > 0$. 
	
	The proof is complete.
\end{proof}

With the help of
Lemmas \ref{lemma:RkfBdd-RSRe2020} and \ref{lem:RkBounded-RSRe2020}, the direct problems can be reformulated.
Readers may refer to \cite[Theorem 2.1]{ma2020determining}, \cite[Theorem 2.3]{ma2019determining}, \cite[Theorem 4.3]{Lassas2008}, \cite[Theorem 3.3]{LiHelinLiinverse2018}, and \cite[Theorem 3.3]{li2019inverse} as examples of how to formulate the direct problems, and we omit these details here.

\section{Recovery by near-field data} \label{sec:Matti-RSRe2020}

In this section we consider key steps in the works
\cite{LassasA, Lassas2008, caro2016inverse, LiHelinLiinverse2018, li2019inverse, li2020inverse}.
Lemma \ref{lem:kka-RSRe2020} is crucial in the key steps of the works, and its proof relies on Lemmas \ref{lem:intab-RSRe2020} and \ref{lem:kka-RSRe2020}.
We shall first investigate these useful lemmas.

\subsection{Useful lemmas} \label{sec:MUL-RSRe2020}

Lemma \ref{lem:intab-RSRe2020} is a standard result in the field of oscillatory integral and microlocal analysis.

\begin{lem} \label{lem:intab-RSRe2020}
Assume $\alpha$ and $\beta$ are multi-indexes,
then the following identities hold in the oscillatory integral sense,
\begin{align} 
\int_{\R_x^n \times \R_\xi^n} e^{ix\cdot \xi} \dif x \dif \xi & = (2\pi)^{n}, \label{eq:xxiInt-SuppMater} \\
\int_{\R_x^n \times \R_\xi^n} e^{ix\cdot \xi} x^\alpha \xi^\beta \dif x \dif \xi & = (2\pi)^{n} \alpha! \delta^{\alpha \beta}. \label{eq:xxiabInt-SuppMater}
\end{align}
\end{lem}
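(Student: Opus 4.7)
The plan is to interpret both identities as oscillatory integrals in the distributional sense and reduce the computation to elementary pairings with the Dirac delta. Introducing a Schwartz cutoff $\chi \in \scrS(\R_x^n \times \R_\xi^n)$ with $\chi(0,0)=1$, I would set
\[
I_\epsilon(\alpha,\beta) := \int_{\R^n \times \R^n} e^{ix\cdot\xi}\, x^\alpha \xi^\beta\, \chi(\epsilon x, \epsilon \xi) \dif x \dif \xi,
\]
recalling that the oscillatory integral is \emph{defined} as $\lim_{\epsilon \to 0^+} I_\epsilon(\alpha,\beta)$, the limit being independent of the choice of $\chi$ (see, e.g., \S7.8 of H\"ormander Vol.~I). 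For each fixed $\epsilon > 0$ the integrand is Schwartz, so Fubini applies without issue.

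The next step is to evaluate the limit via the distributional Fourier inversion formula $\int_{\R^n} e^{ix\cdot\xi} \dif \xi = (2\pi)^n \delta(x)$ in $\scrS'(\R^n)$. This handles \eqref{eq:xxiInt-SuppMater} immediately after integrating against $1$ (the regularization producing $\chi(0,0)(2\pi)^n = (2\pi)^n$). For \eqref{eq:xxiabInt-SuppMater}, the key manipulation is $\xi^\beta e^{ix\cdot\xi} = (-i\partial_x)^\beta e^{ix\cdot\xi}$, which after integration in $\xi$ gives
\[
\int_{\R^n} e^{ix\cdot\xi} \xi^\beta \dif \xi = (2\pi)^n (-i)^{|\beta|}\, \partial_x^\beta \delta(x).
\]
Pairing with $x^\alpha$ via $\langle \partial^\beta \delta, x^\alpha\rangle = (-1)^{|\beta|} \partial_x^\beta(x^\alpha)\big|_{x=0}$, together with the identity $\partial_x^\beta(x^\alpha)|_{x=0} = \alpha!$ when $\alpha=\beta$ and $0$ otherwise, yields the Kronecker factor $\alpha!\,\delta^{\alpha\beta}$ (up to a phase $i^{|\beta|}$ that must be reconciled against the paper's sign convention).

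The main obstacle is the rigorous justification that $\lim_{\epsilon \to 0^+} I_\epsilon(\alpha,\beta)$ agrees with the formal distributional pairing performed above. This is handled by the standard ``gain-of-decay'' trick for oscillatory integrals: using the operator identities
\[
e^{ix\cdot\xi} = \agl[\xi]^{-2M}(1-\Delta_x)^M e^{ix\cdot\xi}, \qquad e^{ix\cdot\xi} = \agl[x]^{-2N}(1-\Delta_\xi)^N e^{ix\cdot\xi},
\]
and integrating by parts in $x$ and $\xi$ transfers the derivatives onto the amplitude $x^\alpha \xi^\beta \chi(\epsilon x, \epsilon \xi)$. Since all $\chi$-derivatives are uniformly bounded as $\epsilon \to 0^+$, choosing $M > (n+|\alpha|)/2$ and $N > (n+|\beta|)/2$ produces an integrand that is absolutely integrable with a bound independent of $\epsilon$. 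Dominated convergence then identifies the $\epsilon \to 0^+$ limit with the pairing computed via the delta distribution, completing the proof.
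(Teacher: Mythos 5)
Your proof is correct and takes essentially the same route as the paper: both regularize the oscillatory integral by a cutoff $\chi(\epsilon x,\epsilon\xi)$ and identify the $\epsilon\to 0^+$ limit, the only cosmetic difference being that you route through the distributional identity $\int e^{ix\cdot\xi}\dif\xi=(2\pi)^n\delta(x)$ and pair $\partial^\beta\delta$ with $x^\alpha$, whereas the paper transfers the monomial $y^\alpha$ onto $D_\eta^\alpha$ by integrating by parts directly in the regularized integral and then using Leibniz to peel off the $\epsilon$-dependent remainders. Your ``gain-of-decay'' justification of dominated convergence is the standard one and is fine.

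The one thing worth pressing on is precisely the phase factor you flag at the end: it is not a notational artifact to be waved away but a genuine discrepancy. Tracking signs carefully in either proof gives
\[
\int e^{ix\cdot\xi}\,x^\alpha\xi^\beta\dif x\dif\xi \;=\; (2\pi)^n\, i^{|\alpha|}\,\alpha!\,\delta^{\alpha\beta},
\]
not $(2\pi)^n\alpha!\,\delta^{\alpha\beta}$ as stated (a one-dimensional Gaussian regularization gives $\iint e^{ix\xi}x\xi\,e^{-\epsilon(x^2+\xi^2)}\dif x\dif\xi\to 2\pi i$). The paper's own argument slips at two points that happen to compound rather than cancel: the step $\int e^{ix\cdot\xi}x^\alpha\xi^\beta=\int e^{ix\cdot\xi}D_\xi^\alpha(\xi^\beta)$ omits the $(-1)^{|\alpha|}$ from integrating $D_\xi^\alpha$ by parts, and $D_\xi^\alpha(\xi^\alpha)$ equals $i^{-|\alpha|}\alpha!$, not $\alpha!$. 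The same $i^{|\alpha|}$ is missing from the asymptotic expansion stated in Lemma~\ref{lem:aat-RSRe2020}, whose correct form (H\"ormander, Lemma~18.2.1) has $D_{\xi'}^\alpha$ rather than $\partial_{\xi'}^\alpha$. So you should not try to ``reconcile'' your $i^{|\beta|}$ against the paper's sign convention --- the paper's stated constant is the one that needs correction.
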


\begin{proof}
	The integral in \eqref{eq:xxiInt-SuppMater} should be understood as oscillatory integral. 
	Fix a cut off function $\chi \in C_c^\infty(\Rn)$ with $\chi(0) = 1$, we can compute
	\begin{align}
	\int_{\R_x^n \times \R_\xi^n} e^{ix\cdot \xi} \dif x \dif \xi
	& = \lim_{\epsilon \to 0^+} \int e^{ix\cdot \xi} \chi(\epsilon x) \chi(\epsilon \xi) \dif x \dif \xi
	= (2\pi)^{n/2} \lim_{\epsilon \to 0^+} \int \chi(\epsilon^2 \xi) \widehat\chi(-\xi) \dif \xi. \label{eq:xxiInt1-SuppMater}
	\end{align}
	Denote $M = \sup_{\Rn} \chi$.
	We have $|\chi(\epsilon^2 \xi)| \leq M < \infty$.
	Note that $\chi \in C_c^\infty(\Rn)$, so $\widehat \chi$ is rapidly decaying, thus $\widehat\chi(-\xi)$ is Lebesgue integrable.
	Therefore, we can see that $\widehat\chi(-\xi) \chi(\epsilon^2 \xi)$ is dominated by a Lebesgue integrable function.
	Thus by using Lebesgue Dominated Convergence Theorem, we can continue \eqref{eq:xxiInt1-SuppMater} as
	\begin{align*}
	\int_{\R_x^n \times \R_\xi^n} e^{ix\cdot \xi} \dif x \dif \xi
	& = (2\pi)^{n/2} \int \widehat\chi(-\xi) \dif \xi
	= (2\pi)^{n} \chi(0) = (2\pi)^{n}.
	\end{align*}
	We arrive at \eqref{eq:xxiInt-SuppMater}.

	To show, we first show that	\begin{equation} \label{eq:Lemma3-NashMoserStdyNt}
	(2\pi)^{-n} \iint e^{-iy\cdot \eta} y^\alpha \eta^\beta \dif y \dif \eta = (2\pi)^{-n} \iint e^{-iy\cdot \eta} D_\eta^\alpha(\eta^\beta) \dif y \dif \eta,
	\end{equation}
	where $D_{\eta_j} := \frac 1 {\textrm{i}} \partial_{\eta_j}$.
	Both the LHS and RHS in \eqref{eq:Lemma3-NashMoserStdyNt} should be understood as oscillatory integrals. Thus fix some $\chi \in \scrD(\Rn)$ such that $\chi(x) \equiv 1$ when $|x| \leq 1$, we have
	\begin{align}
	\iint e^{-iy\cdot \eta} y^\alpha \eta^\beta \dif y \dif \eta 
	& = \lim_{\epsilon \to 0^+} \iint e^{-iy\cdot \eta} y^\alpha \eta^\beta \chi(\epsilon y) \chi(\epsilon \eta) \dif y \dif \eta \nonumber \\
	& =\lim_{\epsilon \to 0^+} \sum_{0 < \gamma \leq \alpha} \epsilon^{|\gamma|} \binom{\alpha}{\gamma} \iint e^{-iy\cdot \eta} \chi(\epsilon y) \cdot D_\eta^{\alpha-\gamma}(\eta^\beta) \cdot \big( \partial^\gamma \chi \big) (\epsilon \eta) \dif y \dif \eta \nonumber \\
	& \quad + \iint e^{-iy\cdot \eta} D_\eta^\alpha \big( \eta^\beta \big) \dif y \dif \eta. \label{eq:Lemma3init-NashMoserStdyNt}
	\end{align}
	As $\epsilon$ goes to zero, we have
	\begin{align*}
	\iint e^{-iy\cdot \eta} \chi(\epsilon y) \cdot D_\eta^{\alpha-\gamma}(\eta^\beta) \cdot \big( \partial^\gamma \chi \big) (\epsilon \eta) \dif y \dif \eta 
	\to & \ D_\eta^{\alpha-\gamma}(\eta^\beta) \cdot \big( \partial^\gamma \chi \big) (\epsilon \eta) \big|_{\eta = 0} \quad (\epsilon \to 0^+).
	\end{align*}
	Because $\gamma > 0$, $\big( \partial^\gamma \chi \big) (\epsilon \eta) \big|_{\eta = 0} = 0$. Therefore, we have
	\begin{equation}
	\lim_{\epsilon \to 0^+} \epsilon^{|\gamma|} \sum_{0 < \gamma \leq \alpha} \binom{\alpha}{\gamma} \iint e^{-iy\cdot \eta} \chi(\epsilon y) \cdot D_\eta^{\alpha-\gamma}(\eta^\beta) \cdot \big( \partial^\gamma \chi \big) (\epsilon \eta) \dif y \dif \eta = 0. \label{eq:Lemma3Inter-NashMoserStdyNt}
	\end{equation}
	Combining \eqref{eq:Lemma3init-NashMoserStdyNt} and \eqref{eq:Lemma3Inter-NashMoserStdyNt}, we arrive at
	$$\iint e^{-iy\cdot \eta} y^\alpha \eta^\beta \dif y \dif \eta = \lim_{\epsilon \to 0^+} \iint e^{-iy\cdot \eta} y^\alpha \eta^\beta \chi(\epsilon y) \chi(\epsilon \eta) \dif y \dif \eta = \iint e^{-iy\cdot \eta} D_\eta^\alpha \big( \eta^\beta \big) \dif y \dif \eta.$$
	We proved \eqref{eq:Lemma3-NashMoserStdyNt}.

	Then, for multi-indexes $\alpha$ and $\beta$, if there exists $i$ such that $\alpha_i \neq \beta_i$, say, $\alpha_i > \beta_i$, then $D_\xi^\alpha(\xi^\beta) = 0$ and so
	\begin{equation*}
		\int e^{ix\cdot \xi} x^\alpha \xi^\beta \dif x \dif \xi 
		= \int e^{ix\cdot \xi} D_\xi^\alpha(\xi^\beta) \dif x \dif \xi
		= 0.
	\end{equation*}

	When $\alpha = \beta$, we have
	\begin{equation*}
	\int e^{ix\cdot \xi} x^\alpha \xi^\beta \dif x \dif \xi 
	= \int e^{ix\cdot \xi} D_\xi^\alpha(\xi^\alpha) \dif x \dif \xi
	= \int e^{ix\cdot \xi} \alpha! \dif x \dif \xi
	= (2\pi)^n \alpha!.
	\end{equation*}
	We have arrived at \eqref{eq:xxiabInt-SuppMater}.
\end{proof}

We also need \cite[Lemma 18.2.1]{hormander1985analysisIII} and we present a proof below.

\begin{lem} \label{lem:aat-RSRe2020}
If $a \in S^m(\Rn \times \R^k)$ and $u$ is defined by the oscillatory integral
	\[
	u(x) = \int e^{i \agl[x',\xi']} a(x,\xi') \dif \xi',
	\]
	then there exists $\tilde a \in S^m(\R^{n-k} \times \R^k)$ such that
	\[
	u(x) = \int e^{i \agl[x',\xi']} \tilde a(x'',\xi') \dif \xi',
	\]
	and $\tilde a$ has the asymptotic expansion
	\[
	\tilde a(x'',\xi') \sim \sum_{|\alpha| \leq N} \partial_{x'}^\alpha \partial_{\xi'}^\alpha a(0,x'',\xi') / \alpha!.
	\]
\end{lem}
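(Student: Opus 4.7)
The strategy is to Taylor-expand the amplitude $a(x',x'',\xi')$ in the variables $x'$ about $x'=0$, convert the resulting monomials $(x')^\alpha$ into $\xi'$-derivatives of the phase via the identity $(x')^\alpha e^{i\langle x',\xi'\rangle}=D_{\xi'}^\alpha e^{i\langle x',\xi'\rangle}$ (with $D_{\xi'}=-i\partial_{\xi'}$), and then integrate by parts so that the derivatives fall onto the amplitude. Each such derivative reduces the symbol order in $\xi'$ by one, producing terms of order $m-|\alpha|$ that are independent of $x'$; combining these terms via Borel summation then yields the desired $\tilde a$.

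Concretely, for any integer $N\geq 1$, Taylor's formula gives
$$a(x',x'',\xi') = \sum_{|\alpha|<N}\frac{(x')^\alpha}{\alpha!}\,\partial_{x'}^\alpha a(0,x'',\xi') + R_N(x',x'',\xi'),$$
with $R_N$ the standard integral remainder involving $\partial_{x'}^\beta a(tx',x'',\xi')$ for $|\beta|=N$. Substituting into the oscillatory integral for $u$ and integrating by parts in $\xi'$ (rigorously justified by the $\chi(\varepsilon\xi')$ regularization used in the proof of Lemma \ref{lem:intab-RSRe2020}), the polynomial part produces, up to the unimodular factor absorbed by the $D$-versus-$\partial$ convention, the finite sum
$$\sum_{|\alpha|<N}\int e^{i\langle x',\xi'\rangle}\,\frac{1}{\alpha!}\,\partial_{\xi'}^\alpha\partial_{x'}^\alpha a(0,x'',\xi')\,d\xi',$$
whose amplitudes $\partial_{\xi'}^\alpha\partial_{x'}^\alpha a(0,x'',\xi')/\alpha!$ lie in $S^{m-|\alpha|}(\R^{n-k}\times\R^k)$ by the symbol estimates on $a$, reproducing the stated asymptotic expansion term-by-term.

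After the same integration-by-parts manipulation, the remainder $R_N$ becomes an oscillatory integral whose amplitude lies in $S^{m-N}$ with bounds uniform in $x'$ on compact sets; in particular its contribution to $u$ is a $C^r$-function of $x$ whenever $N$ is chosen large relative to $r$ and $k$. Applying Borel's asymptotic-summation theorem, I construct $\tilde a\in S^m(\R^{n-k}\times\R^k)$ realizing the claimed expansion. The discrepancy between $u(x)$ and $\int e^{i\langle x',\xi'\rangle}\tilde a(x'',\xi')\,d\xi'$ is then a $C^\infty$-function of $x$, which can be absorbed into $\tilde a$ by modifying it by a Schwartz function of $\xi'$; this modification affects neither the asymptotic expansion nor membership in $S^m$.

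The main obstacle is making the integration by parts rigorous in the oscillatory-integral sense, since the phase $\langle x',\xi'\rangle$ is non-elliptic (it vanishes at $x'=0$) and the amplitudes have no decay in $\xi'$. My plan is to insert the cutoff $\chi(\varepsilon\xi')$ in every integral, derive uniform symbol-class bounds independent of $\varepsilon$, and pass to the limit $\varepsilon\to 0^+$ by dominated convergence, mirroring the argument in the proof of Lemma \ref{lem:intab-RSRe2020}. A secondary technical point is controlling the $x'$-dependence of the remainder uniformly: this is handled by observing that $\partial_{x'}^\beta a(tx',x'',\xi')$ satisfies $S^m$-estimates uniformly in $t\in[0,1]$ and in $x'$ over compact sets, so that the $N$-fold integration by parts produces an $S^{m-N}$-symbol whose constants grow at most polynomially in $x'$, which suffices for the asymptotic statement.
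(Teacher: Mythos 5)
Your approach inverts the paper's order of operations, and that inversion is where the gap lies. The paper (following H\"ormander/Alinhac) first \emph{defines} $\tilde a$ as the Fourier transform of $u$ in the $x'$-variable, $\tilde a(x'',\xi') = (2\pi)^{-k}\int e^{-ix'\cdot\xi'}u(x',x'')\,dx'$, so the representation $u=\int e^{i\langle x',\xi'\rangle}\tilde a\,d\xi'$ is automatic by Fourier inversion. Substituting the oscillatory integral for $u$ turns this into $\tilde a(x'',\xi')=(2\pi)^{-k}\iint e^{ix'\cdot\theta}a(x',x'',\xi'+\theta)\,d\theta\,dx'$, which is then Taylor-expanded jointly in $x'$ and $\theta$ and collapsed to the diagonal $\alpha=\beta$ via the identity of Lemma \ref{lem:intab-RSRe2020}. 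Crucially, the claim $\tilde a-\sum_{|\alpha|<N}(\cdot)\in S^{m-N}$ is then proved by a genuine oscillatory-integral estimate: a Littlewood--Paley/dyadic decomposition of the $(x',\theta)$-integral with $L$-fold integration by parts against $\frac{(\theta,x')\cdot\nabla}{i(|x'|^2+|\theta|^2)}$, summed geometrically. This is the technical core of the lemma.

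Your plan does not supply that core. You Taylor-expand $a$ in $x'$ under the integral defining $u$, integrate by parts to reproduce the asymptotic terms (which is fine, modulo an $i^{|\alpha|}$ that you rightly flag as a $D$-versus-$\partial$ convention), Borel-sum to get a candidate $\tilde a$, and then claim that the discrepancy $v:=u-\int e^{i\langle x',\xi'\rangle}\tilde a\,d\xi'$, being $C^\infty$ in $x$, can be absorbed by modifying $\tilde a$ by something in $S^{-\infty}$. That absorption step is unjustified: a $C^\infty$ function of $(x',x'')$ does \emph{not} in general admit a representation $\int e^{i\langle x',\xi'\rangle}b(x'',\xi')\,d\xi'$ with an $x'$-independent amplitude $b\in S^{-\infty}$. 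To absorb $v$ you need $\hat v(x'',\xi')=(2\pi)^{-k}\int e^{-ix'\cdot\xi'}v\,dx'\in S^{-\infty}$, which is equivalent to showing $\hat u - \sum_{|\alpha|<N}(\cdot)\in S^{m-N}$ for every $N$ --- i.e., exactly the remainder estimate you have not proved. Your observation that the remainder contribution $\int e^{i\langle x',\xi'\rangle}R_N(x,\xi')\,d\xi'$ is $C^r$ for $N$ large relative to $r$ and $k$ is a statement about pointwise smoothness of $u$, not about membership of its partial Fourier transform in a symbol class; moreover after your integration by parts $R_N$ still depends on $x'$ (through the Taylor remainder's argument $tx'$), so the remainder is not yet in the target form. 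You still need to estimate the oscillatory integral $\iint e^{ix'\cdot(\eta-\xi')}\tilde R_N(x',x'',\eta)\,d\eta\,dx'$ as an element of $S^{m-N}$, which is precisely the dyadic argument the paper carries out. In short: the Taylor expansion and integration by parts correctly produce the candidate expansion, but the $S^{m-N}$ control of the remainder --- and with it the ability to absorb the discrepancy --- is the missing and essential ingredient.
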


\begin{rem} \label{rem:aat-RSRe2020}
Note that if $a(x,\xi') = 0$ near $\{x' = 0\}$, e.g.~$a(x,\xi) = (1 - \chi(x')) a'(x,\xi)$ for some $a'$ and some cutoff function satisfying $\chi(y) \equiv 1$ near the origin, 
then Lemma \ref{lem:aat-RSRe2020} implies that $\tilde a \in S^{-\infty}$.
\end{rem}

\begin{proof}
	The $\tilde a(x'',\cdot)$ is the Fourier transform of $u(\cdot,x'')$ with some constants, i.e.
	\[
	\tilde a(x'',\xi') = (2\pi)^{-k/2} \calF_{x'} \{ u(x',x'') \}(\xi') = (2\pi)^{-k} \int e^{-ix' \cdot \xi'} u(x',x'') \dif x'.
	\]
	Then we can have
	\begin{align*}
	\tilde a(x'',\xi') 
	& = (2\pi)^{-k} \int e^{-ix' \cdot \xi'} u(x) \dif x'
	= (2\pi)^{-k} \int e^{ix' \cdot \theta} a(x,\xi' + \theta) \dif \theta \dif x'.
	\end{align*}
	By adopting the way used in \S I.8.1 in \cite{alinhac2007pseudo} in computing the oscillatory integral, we can easily show that ${|\partial_{x''}^\alpha \partial_{\xi'}^\beta \tilde a(x'',\xi)| \lesssim \agl[\xi']^{m - |\beta|}}$, and this can be seen by the fact that
\[
|\partial_{\xi'}^\alpha [\chi(x,\theta) a(2^k x',x'',\xi' + 2^k \theta)]| \lesssim 2^{mk} \agl[\xi']^k,
\]
so $\tilde a \in S^m(\R^{n-k} \times \R^k)$.

The idea of the proof is to expand $a(x',x'', \xi' + \theta)$ in terms of $x'$ and $\theta$ by Taylor expansion
\begin{align} 
a(x',x'',\xi' + \theta) 
& = \sum_{|\alpha| + |\beta| \leq 2N} \frac {x'^\alpha  \theta^\beta}{\alpha! \beta!} \partial_{x'}^\alpha \partial_{\xi'}^\beta a(0, x'',\xi') \nonumber \\
& \quad + \sum_{|\alpha| + |\beta| = 2N+1} \frac {x'^\alpha \theta^\beta}{\alpha! \beta!} \partial_{x'}^\alpha \partial_{\xi'}^\beta a(\eta x', x'',\xi' + \eta \theta), \quad 0 < \eta < 1, \nonumber 
\end{align}
and to use Lemma \ref{lem:intab-RSRe2020}.
We have
\begin{align}
\tilde a(x'', \xi')
& = (2\pi)^{-k} \int e^{ix' \cdot \theta} a(x',x'',\xi' + \theta) \dif \theta \dif x' \nonumber \\
\nonumber \\
& = \sum_{|\alpha| \leq N} \partial_{x'}^\alpha \partial_{\xi'}^\beta a(0, x'',\xi') / \alpha! \nonumber \\
& \quad + \sum_{\substack{ |\alpha| + |\beta| = 2N+1 \\ \gamma \leq \alpha \leq \beta + \gamma}} C_{\alpha,\beta,\gamma} \int e^{ix' \cdot \theta} (\partial_{x'}^\alpha \partial_{\xi'}^{\beta + \gamma} a) (\eta^{|\gamma|}) \partial_\theta^{\alpha - \gamma} (\theta^\beta) \dif \theta \dif x' \label{eq:taa4-StudyNoteHormander}.
\end{align}
Note that the constraint $\alpha \leq \beta + \gamma$ in \eqref{eq:taa4-StudyNoteHormander} comes from the fact that $\partial_\theta^{\alpha - \gamma} (\theta^\beta) = 0$ when $\alpha > \beta + \gamma$.
Moreover, the constraint ``$|\alpha| + |\beta| = 2N+1,\, \gamma \leq \alpha \leq \beta + \gamma$'' gives
\begin{equation*} 
2N + 1 = |\alpha| + |\beta| \leq 2|\beta| + |\gamma| \leq 2(|\beta| + |\gamma|)
\quad \Rightarrow 
\quad 
|\beta + \gamma| \geq N + 1.
\end{equation*}

Now we show that each remainder term in \eqref{eq:taa4-StudyNoteHormander} is controlled by $\agl[\xi']^{m-N-1}$.
Denote $b(x', x'', \theta; \xi',\eta) = \big( \partial_{x'}^\alpha \partial_{\xi'}^{\beta + \gamma} a(\eta x', x'', \xi' + \eta \theta) \big) (\eta^{|\gamma|}) \partial_\theta^{\alpha - \gamma} (\theta^\beta)$ with underlining assumptions $\beta + \gamma \geq \alpha$ and $|\beta + \gamma| \geq N + 1$, and we have
\begin{align*}
\tilde a(x'', \xi') - \sum_{|\alpha| \leq N} \partial_{x'}^\alpha \partial_{\xi'}^\beta a(0, x'',\xi') / \alpha!
& = \int e^{ix' \cdot \theta} \chi_0(x',\theta) b \dif \theta \dif x' \nonumber \\
& \quad + \sum_{\ell \geq 1} \int e^{ix' \cdot \theta} \chi(x'/2^\ell,\theta/2^\ell) b(x', x'', \theta; \xi',\eta) \dif \theta \dif x',
\end{align*}
where $\chi_0$ and $\chi$ is as in \cite[\S I.8.1]{alinhac2007pseudo}.
Here we only show how the second term in the equation above is controlled by $\agl[\xi']^{m-N-1}$.
The computation is as follows,
	\begin{align}
	& \int e^{ix' \cdot \theta} \chi(x'/2^\ell,\theta/2^\ell) b(x', x'', \theta; \xi',\eta) \dif \theta \dif x' \nonumber\\
	\lesssim & \ 2^{2\ell k} \int \big( \frac {(\theta,x') \cdot \nabla_{(x',\theta)}} {i2^{2\ell} (|x'|^2+|\theta|^2)} \big)^L (e^{i 2^{2\ell} x' \cdot \theta}) \cdot \chi(x',\theta) b(x', x'', \theta; \xi', 2^\ell \eta) \dif \theta \dif x' \nonumber\\
	\lesssim & \ \agl[\xi']^{m-N-1} \cdot 2^{\ell(2k + 1 - 2L)} \int_{\supp \chi} C_L 2^{\ell (|m-N-1|+L)} \dif \theta \dif x' \nonumber\\
	\lesssim & \ \agl[\xi']^{m-N-1} \cdot 2^{\ell(2k + 1 + |m-N-1| - L)}, \nonumber
	\end{align}
	thus if we take $L$ to be large enough such that $2k + 1 + |m-N-1| - L < 0$, we can have
	\[
	|\sum_{\ell \geq 1} \int e^{ix' \cdot \theta} \chi(x'/2^\ell,\theta/2^\ell) b \dif \theta \dif x'| \lesssim \sum_{\ell \geq 1} \agl[\xi']^{m-N-1} 2^{\ell(2k + 1 + |m-N-1| - L)} \lesssim \agl[\xi']^{m-N-1}.
	\]
	This shows $|\tilde a(x'', \xi') - \sum_{|\alpha| \leq N} \partial_{x'}^\alpha \partial_{\xi'}^\alpha a(0,x'',\xi') / \alpha!| \lesssim \agl[\xi']^{m-N-1}$. Using the same procedure, we can easily show $\big| \partial_{x''}^\kappa \partial_{\xi'}^\beta [\tilde a(x'', \xi') - \sum_{|\alpha| \leq N} \partial_{x'}^\alpha \partial_{\xi'}^\alpha a(0,x'',\xi') / \alpha!] \big| \lesssim \agl[\xi']^{m-N-1 - |\beta|}$, and hence
\[
	\tilde a(x'', \xi') - \sum_{|\alpha| \leq N} \partial_{x'}^\alpha \partial_{\xi'}^\alpha a(0,x'',\xi') / \alpha! \in S^{m-N-1}(\R^{n-k} \times \R^k).
\]
The proof is complete.
\end{proof}

We also need \cite[Lemma 18.2.9]{hormander1985analysisIII} and we present a proof below.

\begin{lem} \label{lem:kka-RSRe2020}
Assume that $a \in S^m$ and
\[
u(x) = \int e^{i \agl[x', \xi']} a(x,\xi') \dif \xi', \quad \xi' \in \R^k,
\]
and a $C^\infty$ diffeomorphism $\rho \colon y \in \Rn \mapsto \rho(y) = (\rho_1(y), \rho_2(y)) \in \Rn$ preserving the hyperplane $S = \{ x \,;\, x' = 0 \}$.
The $\rho_1$ is $k$-dimensional while $\rho_2$ is $(n-k)$-dimensional.
Assume $u$ and the pull-back $\rho^* u$ is $C^\infty$-smooth in $\Rn \backslash S$,
then there exists $\tilde a \in S^m(\R^{n-k} \times \R^k)$ such that $\rho^* u$ can be represented as
\[
\rho^* u(y) = \int e^{i \agl[y', \xi']} \tilde a(y'',\xi') \dif \xi',
\]
and
\[
\tilde a(y'',\eta) - a(0,\rho_2(0,y''),(\psi(0,y''))^{T,-1} \eta) |\det\psi(0,y'')|^{-1} \in S^{m-1}(\R^{n-k} \times \R^k),
\]
where $(*)^T$ and $(*)^{T,-1}$ signify the transpose and transpose with inverse of a matrix, respectively.
\end{lem}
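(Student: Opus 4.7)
The strategy is to change variables so that the phase in the oscillatory integral for $\rho^* u$ becomes the standard $\agl[y',\eta]$, and then to appeal to Lemma \ref{lem:aat-RSRe2020} to eliminate the $y'$-dependence of the resulting amplitude, reading off the leading-order contribution from the asymptotic expansion guaranteed there.

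First, since $\rho$ preserves $S = \{x' = 0\}$, one has $\rho_1(0,y'') = 0$ for every $y''$, so by Hadamard's lemma there is a smooth $k \times k$ matrix-valued function $\psi(y)$ with
\[
\rho_1(y) = \psi(y)\, y', \qquad \psi(0,y'') = \partial_{y'} \rho_1(0,y''),
\]
and $\psi(0,y'')$ is invertible because $\rho$ is a diffeomorphism preserving $S$. By continuity $\psi(y)$ stays invertible in some neighborhood $V$ of $S$. I would then fix a cutoff $\chi(y') \in C_c^\infty(\R^k)$ equal to $1$ near the origin and supported in the slab where $\psi$ is invertible, and split $\rho^* u = \chi(y')\,\rho^* u + (1 - \chi(y'))\,\rho^* u$. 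The second piece is smooth on all of $\Rn$ because $\rho^* u$ is $C^\infty$ outside $S$ and $1-\chi$ vanishes near $S$; by Remark \ref{rem:aat-RSRe2020} it therefore contributes only an $S^{-\infty}$ symbol that can be absorbed into the error term.

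On the support of $\chi$ I would perform the fiberwise change of variables $\eta = \psi(y)^T \xi'$, which converts $\agl[\rho_1(y),\xi']$ into $\agl[y',\eta]$ and the measure $\dif{\xi'}$ into $|\det\psi(y)|^{-1}\dif{\eta}$. This yields
\[
\chi(y')\,\rho^* u(y) = \int e^{i\agl[y',\eta]}\, b(y,\eta)\, \dif{\eta}, \qquad b(y,\eta) := \chi(y')\, a(\rho(y), \psi(y)^{T,-1}\eta)\,|\det\psi(y)|^{-1},
\]
with $b \in S^m(\Rn \times \R^k)$ by the chain rule together with the uniform invertibility of $\psi$ on $\supp\chi$. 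Lemma \ref{lem:aat-RSRe2020} then produces $\tilde a \in S^m(\R^{n-k} \times \R^k)$ with the claimed integral representation and the asymptotic expansion $\tilde a(y'',\eta) \sim \sum_\alpha \partial_{y'}^\alpha \partial_\eta^\alpha b(0,y'',\eta)/\alpha!$. The $\alpha = 0$ term is exactly $a(0,\rho_2(0,y''), \psi(0,y'')^{T,-1}\eta)\,|\det\psi(0,y'')|^{-1}$ (using $\chi(0) = 1$ and $\rho(0,y'') = (0,\rho_2(0,y''))$), while all remaining terms collectively lie in $S^{m-1}$, giving the asserted formula.

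The main obstacle I anticipate is justifying the $y$-dependent substitution $\eta = \psi(y)^T \xi'$ in an oscillatory, rather than absolutely convergent, integral. I would handle this by first replacing $a$ by a truncation $a_N \in S^{-N}$ for large $N$, carrying out the change of variables on the resulting classical integral, and then passing to the limit in $N$ using uniform $S^m$-estimates, mirroring the regularization device already used in the proof of Lemma \ref{lem:aat-RSRe2020}. A secondary, essentially bookkeeping, issue is verifying that mixed derivatives of $b$ in $(y,\eta)$ satisfy the $S^m$ bounds, which reduces to the chain rule applied to $a(\rho(y),\psi(y)^{T,-1}\eta)$ and the smooth invertibility of $\psi$ on $\supp\chi$.
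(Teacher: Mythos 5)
Your proposal is correct and follows essentially the same route as the paper: factor $\rho_1(y) = \psi(y)\,y'$, cut off near $S$, perform the fiberwise substitution $\eta = \psi(y)^T\xi'$, and read the leading term off the asymptotic expansion provided by Lemma~\ref{lem:aat-RSRe2020}. The only (cosmetic) difference is that you apply Lemma~\ref{lem:aat-RSRe2020} once to the composite amplitude $\chi(y')\,a(\rho(y),\psi(y)^{T,-1}\eta)|\det\psi(y)|^{-1}$, whereas the paper applies it twice — first to replace $a(x,\xi')$ by a $y'$-independent $\bar a(x'',\xi')$ before pulling back, and then again after the substitution — and in fact your version is slightly tighter and also more careful about justifying the $y$-dependent change of variables in an oscillatory integral.
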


\begin{rem}
The condition ``$u$ and $\rho^* u$ is $C^\infty$-smooth in $\Rn \backslash S$'' is indispensable. 
\end{rem}

\begin{proof}
Because $\rho$ preserves the hyperplane $\{ x \,;\, x' = 0 \}$, there exists a $C^\infty$ matrix-valued function $\psi$ such that $\rho_1(y',y'') = \psi(y) \cdot y'$, where the dot operation ``$\cdot$'' here signifies the matrix multiplication.
According to Lemma \ref{lem:aat-RSRe2020}, there exist $\bar a \in S^m$ such that
\(
u(x) = \int e^{i \agl[x', \xi']} \bar a(x'',\xi') \dif \xi'.
\)
Hence we have
\begin{align*}
\rho^* u(y)
& = u(\rho(y))
= \int e^{i \agl[\rho_1(y), \xi']} \bar a(\rho_2(y),\xi') \dif \xi'
= \int e^{i \agl[\psi(y) \cdot y', \xi']} \bar a(\rho_2(y),\xi') \dif \xi' \nonumber \\
& = \int e^{i \agl[y', (\psi(y) )^{T} \xi']} \bar a(\rho_2(y),\xi') \dif \xi', \nonumber 
\end{align*}
According to Remark \ref{rem:aat-RSRe2020}.
Therefore, we could continue
\begin{align*}
\tilde u(y)
& = \int e^{i \agl[y', (\psi(y))^{T} \xi']} \chi(y') \bar a(\rho_2(y),(\psi(y))^{T,-1} (\psi(y))^{T} \xi') |\det\psi(y)|^{-1} \dif ((\psi(y) )^{T} \xi') + v(y) \nonumber \\
& = \int e^{i \agl[y', \eta]} \chi(y') \bar a(\rho_2(y),(\psi(y))^{T,-1} \eta) |\det\psi(y)|^{-1} \dif \eta + v(y), \nonumber
\end{align*}
where $\chi \in C_c^\infty(\R^k)$ with $\chi(y') \equiv 1$ in a neighborhood $0$ such that the matrix $\psi(y)$ is invertible in $\supp \psi$, and $v(y) = \int e^{i \agl[y', \eta]} b(y'',\eta) \dif \eta$ with $b \in S^{-\infty}$.
Using Lemma \ref{rem:aat-RSRe2020}, we obtain
\(
\tilde u(y) = \int e^{i \agl[y', \eta]} \tilde a(y'', \eta) \dif \eta
\)
where
\[
\tilde a(y'',\eta) - \bar a(\rho_2(0,y''),(\psi(0,y''))^{T,-1} \eta) |\det\psi(0,y'')|^{-1} \in S^{m-1}(\R^{n-k} \times \R^k).
\]
Note that $\bar a$ satisfies
\(
\bar a(x'',\xi') - a(0,x'',\xi') \in S^{m-1}(\R^{n-k} \times \R^k),
\)
so
\[
\tilde a(y'',\eta) - a(0,\rho_2(0,y''),(\psi(0,y''))^{T,-1} \eta) |\det\psi(0,y'')|^{-1} \in S^{m-1}(\R^{n-k} \times \R^k).
\]
The proof is complete.
\end{proof}

Finally, we need Lemma \ref{lem:intConv-RSRe2020}.

\begin{lem} \label{lem:intConv-RSRe2020}
	For any stochastic process $\{g(k,\omega)\}_{k \in \R_+}$ satisfying
\begin{equation} \label{eq:E1-RSRe2020}
	\int_1^{+\infty} k^{m-1} \mathbb E (|g(k,\cdot)|) \dif k < +\infty,
\end{equation}
it holds that
\begin{equation} \label{eq:E2-RSRe2020}
	\lim_{K \to +\infty} \frac 1 K \int_K^{2K} k^m g(k,\omega) \dif k = 0, \ \ass \omega \in \Omega.
\end{equation}
\end{lem}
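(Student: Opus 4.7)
The plan is to reduce the almost-sure statement to a pointwise tail-integral argument, after absorbing the factor $k^m/K$ into the integrand of \eqref{eq:E1-RSRe2020}. The key observation is the elementary bound $k^m/K \leq 2 k^{m-1}$ which is valid for every $k \in [K, 2K]$, since $k/K \leq 2$ on that interval.

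First, I would apply the triangle inequality together with this bound to obtain, for every $\omega \in \Omega$,
\[
\Bigl| \frac 1 K \int_K^{2K} k^m g(k,\omega) \dif k \Bigr| \leq \frac 1 K \int_K^{2K} k^m |g(k,\omega)| \dif k \leq 2 \int_K^{2K} k^{m-1} |g(k,\omega)| \dif k.
\]
This reduces the problem to showing that $\int_K^{2K} k^{m-1} |g(k,\omega)| \dif k \to 0$ as $K \to +\infty$ for almost every $\omega$.

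Next, I would set $G(\omega) := \int_1^{+\infty} k^{m-1} |g(k,\omega)| \dif k$ and use Fubini--Tonelli, whose applicability is guaranteed by the hypothesis \eqref{eq:E1-RSRe2020}, to exchange expectation and integral:
\[
\mathbb E G = \int_1^{+\infty} k^{m-1} \mathbb E |g(k,\cdot)| \dif k < +\infty.
\]
This forces $G(\omega) < +\infty$ on a full-measure event $\Omega_0 \subset \Omega$.

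Finally, for every $\omega \in \Omega_0$ the integral defining $G(\omega)$ converges, so its tails vanish: $\int_K^{+\infty} k^{m-1} |g(k,\omega)| \dif k \to 0$ as $K \to +\infty$, and a fortiori $\int_K^{2K} k^{m-1} |g(k,\omega)| \dif k \to 0$. Combining this with the bound in the first step yields \eqref{eq:E2-RSRe2020}. I do not anticipate any serious obstacle; the only point that requires care is extracting the factor $k/K$ from $k^m/K$, which is exactly what avoids a subsequence-plus-Borel--Cantelli argument (which would only give convergence along, e.g., $K_j = 2^j$ and would require extra work to interpolate to the continuum).
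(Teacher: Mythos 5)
Your proposal is correct. The paper does not actually prove this lemma in the text; it simply cites \cite{ma2019determining} (Lemma~4.1), so a line-by-line comparison is not possible here. That said, your argument is the natural and essentially standard one: bound $k^m/K \leq 2k^{m-1}$ on $[K,2K]$, apply Tonelli (applicable because the integrand is nonnegative and measurable; the hypothesis \eqref{eq:E1-RSRe2020} is then used to conclude that $\mathbb{E}\,G < +\infty$, hence $G(\omega) < +\infty$ a.s.), and observe that the tail of an almost-surely convergent integral vanishes. One minor phrasing point: you attribute the ``applicability'' of Fubini--Tonelli to \eqref{eq:E1-RSRe2020}; more precisely, Tonelli applies unconditionally to a nonnegative jointly measurable integrand, and \eqref{eq:E1-RSRe2020} then furnishes the finiteness you need. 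Your closing remark is also well taken: because the hypothesis is a genuine integral condition rather than a summability condition over a sequence $\{K_j\}$, this argument delivers a true continuum limit in $K$, whereas a Borel--Cantelli route (as used elsewhere in the surveyed works, e.g.\ for Theorem~\ref{thm:5-RSRe2020}) would only give convergence along a prescribed subsequence.
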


\begin{proof}
	Check \cite[Lemma 4.1]{ma2019determining}.
\end{proof}

\subsection{Key steps in the proof} \label{sec:MKS-RSRe2020}

Lemma \ref{lem:intConv-RSRe2020} turns the justification of the ergodicity into the asymptotic analysis of the expectation of related terms.

With the help of Lemma \ref{lem:intConv-RSRe2020}, the most difficult part of the work \cite{LassasA, Lassas2008, LiHelinLiinverse2018, li2019inverse, li2020inverse} boils down to the estimate of the integral
\begin{equation} \label{eq:Ixyz-RSRe2020}
\mathbb I (x,y,k_1,k_2) := \int e^{ik_1(|x - z_1| + |z_1 - y|) - ik_2(|x - z_2| + |z_2 - y|)} C(z_1,z_2) \dif z_1 \dif z_2,
\end{equation}
where $C(z_1,z_2) = \int e^{i (z_1 - z_2) \cdot \xi} c(z_1,\xi) \dif \xi$ and $c \in S^{-m}$.
Readers may refer to
\cite[(30)-(31)]{Lassas2008},
\cite[(3.21) and (3.24)]{LiHelinLiinverse2018}, \cite[(4.2) and (2.1)]{li2019inverse} as well as \cite[Theorems 3.1 and 3.3]{li2020inverse} as examples.

One wonders the decaying rate of $\mathbb I$ in terms of $k_1$ and $k_2$,
and after we got the decaying rate, we substitute this estimate into \eqref{eq:E1-RSRe2020}.
If $\mathbb I$ decays fast enough in terms of $k_1$ and/or $k_2$, the corresponding integral in \eqref{eq:E1-RSRe2020} will be finite and we can obtain some asymptotic ergodicity like \eqref{eq:E2-RSRe2020}.
This is the principal idea in \cite{LassasA, Lassas2008, LiHelinLiinverse2018, li2019inverse, li2020inverse}.

\begin{prop} \label{prop:Ixy-RSRe2020}
	Assume $\mathbb I$ is defined as in \eqref{eq:Ixyz-RSRe2020} and $C(z_1,z_2) = \int e^{i (z_1 - z_2) \cdot \xi} c(z_1,\xi) \dif \xi$ with $c \in S^{-m}$ is a symbol.
	Then for $\forall N \in \mathbb N$ there exists constants $C_N > 0$ such that
	\[
	|\mathbb I (x,y,k_1,k_2)| \leq C_N \agl[k_1 - k_2]^{-N} (k_1 + k_2)^{-m},
	\]
	holds uniformly for $x$, $y$.
\end{prop}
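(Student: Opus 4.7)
The strategy is to view $\mathbb I$ as the pairing of the pseudodifferential operator $T_c$ (with symbol $c\in S^{-m}$) against a pair of oscillatory exponentials, and then to extract the two desired decay factors by two successive phase-integral arguments: stationary phase in $(z_2,\xi)$ to produce the factor $(k_1+k_2)^{-m}$, and non-stationary phase in $z_1$ to produce $\agl[k_1-k_2]^{-N}$. Writing $\phi(z):=|x-z|+|z-y|$ and viewing $C(z_1,z_2)$ as the Schwartz kernel of $T_c$, we rewrite
\[
\mathbb I(x,y,k_1,k_2) = \int e^{ik_1\phi(z_1)}\, u(z_1)\,\dif z_1,\qquad u(z_1):=\int C(z_1,z_2)\,e^{-ik_2\phi(z_2)}\,\dif z_2.
\]

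First, I would analyze $u(z_1)$ as an oscillatory integral with $k_2$ as the large parameter. After the rescaling $\xi=k_2\zeta$ the phase $(z_2,\zeta)\mapsto (z_1-z_2)\cdot\zeta-\phi(z_2)$ has a unique nondegenerate critical point at $(z_2,\zeta)=(z_1,-\nabla\phi(z_1))$ whenever $\nabla\phi(z_1)\neq 0$, with critical value $-\phi(z_1)$. Applying the symbol-valued stationary phase expansion in the spirit of Lemmas \ref{lem:aat-RSRe2020} and \ref{lem:kka-RSRe2020} then yields
\[
u(z_1) = e^{-ik_2\phi(z_1)}\,A(z_1,k_2),
\]
where $A(z_1,k_2)$ is smooth and compactly supported in $z_1$, admits a classical asymptotic expansion in powers of $k_2^{-1}$ with principal term $c(z_1,-k_2\nabla\phi(z_1))$, and thus obeys symbol-type bounds $|\partial_{z_1}^\alpha A(z_1,k_2)|\lesssim k_2^{-m}$ uniformly on the $z_1$-support of $c$ (provided $|\nabla\phi|\gtrsim 1$ there). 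Inserting this back gives $\mathbb I = \int e^{i(k_1-k_2)\phi(z_1)} A(z_1,k_2)\,\dif z_1$.

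For this remaining integral I would invoke non-stationary phase: iterating the first-order operator $L := \bigl(i(k_1-k_2)\bigr)^{-1}(\nabla\phi/|\nabla\phi|^2)\cdot\nabla$, which fixes $e^{i(k_1-k_2)\phi}$, and using the symbol estimates on $A$, $N$ integrations by parts yield $|\mathbb I|\leq C_N\agl[k_1-k_2]^{-N}k_2^{-m}$. The Hermitian symmetry $C(z_1,z_2)=\overline{C(z_2,z_1)}$ means $C$ can equally be represented by a symbol of the same order in the $z_2$-variable, so rerunning the whole argument with $z_1\leftrightarrow z_2$ and $k_1\leftrightarrow k_2$ produces the analogous bound with $k_1^{-m}$. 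Taking the minimum and using $\max(k_1,k_2)\simeq k_1+k_2$ delivers the stated estimate.

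The main technical obstacle is the stationary phase step combined with the degeneracy of $\phi$. On one hand, the amplitude $c(z_1,k_2\zeta)$ carries the large parameter inside its second argument, and one must verify that the symbol-calculus reductions of Lemmas \ref{lem:aat-RSRe2020}/\ref{lem:kka-RSRe2020} yield a genuine asymptotic expansion in $k_2^{-1}$ with constants uniform in $(x,y,z_1)$. On the other hand, $\nabla\phi$ vanishes precisely on the open segment $[x,y]$, which may intersect $\supp\mu$; near this set both phase-integral arguments fail. The standard remedy is to introduce a smooth cutoff isolating a thin tubular neighborhood of $[x,y]$, treating the ``good'' region $\{|\nabla\phi|\gtrsim 1\}$ by the scheme above and handling the ``bad'' region by direct size estimates combined with the smallness of the Lebesgue measure of the tubular neighborhood. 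Uniformity in $(x,y)$ then follows because all phase-integral constants depend only on finitely many $L^\infty$-norms of derivatives of $\phi$, which are uniformly bounded on the relevant compact $z_1$-support.
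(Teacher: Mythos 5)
Your proposal is correct in its main ideas but follows a genuinely different route from the paper's. The paper first straightens the diagonal singular set $\{z_1 = z_2\}$ of the kernel via the sum/difference change of variables $\tau_1 \colon (z_1,z_2)\mapsto (z_1-z_2,\,z_1+z_2)$, then applies a second diffeomorphism $\tau_2$ that linearizes the phase so $\phi$ depends only on $s_1=\phi(z_1)-\phi(z_2)$ and $t_1=\phi(z_1)+\phi(z_2)$; crucially $\tau_1\circ\tau_2^{-1}$ preserves $\{v=0\}$, so Lemma~\ref{lem:kka-RSRe2020} gives a new conormal symbol $\tilde c_2\in S^{-m}$. The $s$-integral then Fourier-inverts and evaluates $\tilde c_2$ at $\xi=-(k_1+k_2)e_1/2$, producing $(k_1+k_2)^{-m}$ directly, and rapid decay of the remaining compactly supported $t$-integral yields $\agl[k_1-k_2]^{-N}$. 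You instead integrate $T_c$ against the single exponential $e^{-ik_2\phi}$ by stationary phase, reading off $c(z_1,-k_2\nabla\phi(z_1))\lesssim k_2^{-m}$, then use non-stationary phase in $z_1$; the symmetrization $\max(k_1,k_2)\simeq k_1+k_2$ recovers the stated combination a posteriori. Your route sidesteps Lemmas~\ref{lem:aat-RSRe2020}--\ref{lem:kka-RSRe2020} in favor of the more familiar $\Psi$DO-on-oscillatory-exponential calculus, which is arguably more transparent, though I would note that what you actually need there is the geometric-optics expansion for $T_c e^{-ik\phi}$, which is not quite what those two lemmas say.

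One caveat is worth flagging explicitly. The degeneracy of $\nabla\phi$ on the segment $[x,y]$ that you raise is indeed a genuine obstruction, and it is equally present, though left unaddressed, in the paper's sketch: the map $\tau_2$ is a diffeomorphism only where $\nabla_z(|x-z|+|z-y|)\neq 0$, which can fail inside $\supp c$ when $[x,y]$ passes through it. Your proposal is the more candid of the two on this point, but the cutoff-plus-size-estimate remedy you sketch requires a quantitative balance of the tube radius against $k_1,k_2$ to recover the stated exponents with no loss, and that balance is not carried out; in the source works (Lassas--P\"aiv\"arinta--Saksman, Caro--Helin--Lassas) this regime is handled by a separate, more careful argument. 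So while both proofs here are sketches that elide this regime, keep in mind that it is not automatic.
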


\begin{proof}
Denote $\phi(z_1,z_2, x,y, k_1, k_2) := k_1(|x - z_1| + |z_1 - y|) - k_2(|x - z_2| + |z_2 - y|)$, then $\mathbb I = \int e^{i\phi} C \dif z_1 \dif z_2$ and $\phi$ is the phase function.
We have
\begin{align} 
\phi(z_1,z_2, x,y, k_1, k_2) 
& = \frac {k_1 + k_2} 2 \big[ (|x - z_1| + |z_1 - y|) - (|x - z_2| + |z_2 - y|) \big] \nonumber \\
& \quad + \frac {k_1 - k_2} 2 \big[ (|x - z_1| + |z_1 - y|) + (|x - z_2| + |z_2 - y|) \big]. \label{eq:kk12-RSRe2020}
\end{align}
We note that the $xyz$ part of the  second term in \eqref{eq:kk12-RSRe2020} is always positive and the first term equals to zero when $z_1 = z_2$.
Also, the function $C$ will be singular when $z_1 = z_2$.
Therefore, the situation near the hyperplane $S_0 := \{ z_1 = z_2 \}$ is crucial for the behavior of $\mathbb I$ regarding the decaying rate in terms of $k_1$, $k_2$.
Therefore, we are willing to do a change of variables inside the integral \eqref{eq:Ixyz-RSRe2020} such that the hyperplane $S_0$ can be featured by a single variable, i.e.~$S_0 = \{v = 0\}$ for some variable $v$.
To be specific,
we choose the change of variables $\tau_1(z_1, z_2) = (v,w)$ where
\begin{equation*} 
\tau_1 \colon \quad v = z_1 - z_2, \quad w = z_1 + z_2.
\end{equation*}
The pull-back of $C$ under $\tau_1^{-1}$ is
\begin{equation} \label{eq:C1-RSRe2020}
C_1(v,w) := (\tau_1^{-1})^* C(v,w) = C(\tau_1^{-1}(v,w))
= \int e^{i v \cdot \xi} c((v + w)/2,\xi) \dif \xi.
\end{equation}

Second, in order to make the phase function $\phi$ more easy to handle, we are also willing to do another change of variables such that $\phi$ can be represented in the form of inner products, i.e.~$\phi = s \cdot t$ for some $s$ and $t$ depending on $x$, $y$, $z_1$, $z_2$, $k_1$ and $k_2$.
One of the choices is $\tau_2(z_1, z_2) = (s,t)$, $s = (s_1,\cdots, s_n) \in \Rn$ and $t = (t_1,\cdots, t_n) \in \Rn$ where
\begin{equation} \label{eq:tau2-RSRe2020}
\tau_2 \colon \quad 
\left\{ \begin{aligned}
s_1 & = (|x - z_1| + |z_1 - y|) - (|x - z_2| + |z_2 - y|), \\
t_1 & = (|x - z_1| + |z_1 - y|) + (|x - z_2| + |z_2 - y|).
\end{aligned} \right.
\end{equation}
We comment that under \eqref{eq:tau2-RSRe2020}, the phase function $\phi$ will only depend on $s_1$ and $t_1$,
and the choice of $s_j$ and $t_j~(j = 2,\cdots, n)$ is inessential as long as the change of variables $\tau_2$ is a diffeomorphism.
Hence we omit the precise definitions of  $s_j$ and $t_j$ $(j > 1)$ and readers may refer to \cite{LassasA, Lassas2008, LiHelinLiinverse2018, li2019inverse, li2020inverse} for more details.
Another thing to note is the map $\tau_1 \circ \tau_2^{-1}$ preserves $S_0$, i.e.~$\tau_1 \circ \tau_2^{-1}(0,t) = (0,w)$.
By Lemma \ref{lem:kka-RSRe2020}, there exists a symbol $c_2 \in S^{-m}$ such that the pull-back of $C_1$ under $\tau_1 \circ \tau_2^{-1}$ is
\begin{equation} \label{eq:C2-RSRe2020}
C_2(s,t) := (\tau_1 \circ \tau_2^{-1})^* C_1(s,t)
= \int e^{i s \cdot \xi} c_2(t,\xi) \dif \xi,
\end{equation}
By using Lemma \ref{lem:kka-RSRe2020},
we can express $c_2$ by $c$, $\tau_1$ and $\tau_2$, which involves some detailed computations.
Note that we only need the leading term of $c_2$ so the computations wouldn't be too complicated.

The relationship \eqref{eq:C2-RSRe2020} also gives
\[
C_2(s,t)
= (\tau_1 \circ \tau_2^{-1})^* (\tau_1^{-1})^* C(s,t)
= (\tau_2^{-1})^* C(s,t),
\]
and hence we can do the change of variables $\tau_2$ in \eqref{eq:Ixyz-RSRe2020} to obtain
\begin{align}
\mathbb I (x,y,k_1,k_2)
& = \int e^{ik_1(|x - z_1| + |z_1 - y|) - ik_2(|x - z_2| + |z_2 - y|)} C(\tau_2^{-1} \circ \tau_2(z_1,z_2)) \dif (\tau_2^{-1} \circ \tau_2(z_1,z_2)) \nonumber \\
& = \int e^{i(k_1 + k_2) s_1 / 2 + i(k_1 - k_2) t_1 / 2} C(\tau_2^{-1} (s,t)) |\det \tau_2^{-1}(s,t)| \dif (s,t) \nonumber \\
& = \int e^{i(k_1 + k_2) s \cdot e_1 / 2 + i(k_1 - k_2) t \cdot e_1 / 2} C_2(s,t) |\det \tau_2^{-1}(s,t)| \dif s \dif t. \label{eq:Ixyz2-RSRe2020}
\end{align}
Here we need the help of Lemma \ref{lem:aat-RSRe2020} to deal with the $|\det \tau_2^{-1}(s,t)|$ term: there exists a symbol $\tilde c_2 \in S_{-m}$ such that
\begin{equation} \label{eq:tc2-RSRe2020}
C_2(s,t) |\det \tau_2^{-1}(s,t)| = \int e^{i s \cdot \xi} \tilde c_2(t,\xi) \dif \xi.
\end{equation}
The computation of the leading term of $\tilde c_2$ is straight forward,
\[
\tilde c_2(t,\xi)
- c_2(t,\xi) |\det \tau_2^{-1}(0,t)| \in S^{-m-1}.
\]
Combining \eqref{eq:Ixyz2-RSRe2020} and \eqref{eq:tc2-RSRe2020}, we arrive at
\begin{align*}
\mathbb I (x,y,k_1,k_2)
& = \int e^{i(k_1 + k_2) s \cdot e_1 / 2 + i(k_1 - k_2) t \cdot e_1 / 2} \int e^{i s \cdot \xi} \tilde c_2(t,\xi) \dif \xi \dif s \dif t \\
& \simeq \int e^{i(k_1 - k_2) t \cdot e_1 / 2} \tilde c_2(t,-(k_1 + k_2)e_1 / 2) \dif t.
\end{align*}
Now we can see $\mathbb I$ is decaying at the rate of $\agl[k_1 - k_2]^{-N} (k_1 + k_2)^{-m}$ for arbitrary $N \in \mathbb N$.
\end{proof}

We would like to comment that the estimation of $\mathbb I$ is difficult due to the presence of the norm inside the phase function $\phi$.
However, the designs of $\tau_1$ and $\tau_2$ in the arguments above are so peculiar that the estimate of $\mathbb I$ is possible.

\section{Recovery by far-field data} \label{sec:Ma-RSRe2020}

In this section we consider key steps in the works
\cite{ma2020determining, ma2019determining}.
In \cite{ma2020determining, ma2019determining}, the authors use far-field data to do the recovery, and this makes the derivations different from what has been discussed in Section \ref{sec:Matti-RSRe2020}.
Different methodology is required to obtain accurate estimate of the decaying rate.
Lemma \ref{lem:FracExp-MLmGWsSchroEqu2019} plays a key role in the derivation.
Before stepping into the key steps in the derivation, we shall first investigate some useful lemmas.

\subsection{Useful lemmas} \label{subsec:usfl-RSRe2020}

 First, let us recall the notion of the fractional Laplacian \cite{pozrikidis2016fractional} of order 
$s \in (0,1)$ in $\mathbb{R}^n$ ($n\geq 3$),
\begin{equation} \label{eq:FracLapDef-MLmGWsSchroEqu2019}
(-\Delta)^{s/2} \varphi(x) := (2\pi)^{-n} \iint e^{{\textrm{i}}(x-y) \cdot \xi} |\xi|^s \varphi(y) \dif y \dif \xi,
\end{equation}
where the integration is defined as an oscillatory integral.
When $\varphi \in \scrS(\Rn)$, \eqref{eq:FracLapDef-MLmGWsSchroEqu2019} can be understood as a usual Lebesgue integral if one integrates w.r.t.~$y$ first and then integrates w.r.t.~$\xi$.
By duality arguments, the fractional Laplacian can be generalized to act on wider range of functions and distributions (cf. \cite{wong2014pdo}).
It can be verified that the fractional Laplacian is self-adjoint. 

In the following two lemmas, we present the results in a more general form where the space dimension $n$ can be arbitrary but greater than 2, though only the case $n=3$ shall be used subsequently. 

\begin{lem} \label{lem:FracExp-MLmGWsSchroEqu2019}
	For any $s \in (0,1)$, we have
	\begin{equation*}
	(-\Delta_\xi)^{s/2} (e^{{\textrm{i}}x \cdot \xi}) = |x|^s e^{{\textrm{i}}x \cdot \xi}
	\end{equation*}
	in the distributional sense.
\end{lem}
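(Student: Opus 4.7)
The plan is to verify the distributional identity by pairing both sides against an arbitrary test function $\varphi \in \scrS(\Rn)$. First, using the self-adjointness of $(-\Delta)^{s/2}$ noted just before the lemma, I would transfer the operator off the exponential,
\begin{equation*}
\langle (-\Delta_\xi)^{s/2}(e^{\textrm{i}x\cdot \xi}), \varphi\rangle = \langle e^{\textrm{i}x\cdot \xi}, (-\Delta_\xi)^{s/2}\varphi\rangle,
\end{equation*}
thereby reducing the problem to showing
\begin{equation*}
\int_{\Rn} e^{\textrm{i}x\cdot \xi}\,(-\Delta_\xi)^{s/2}\varphi(\xi)\,\dif{\xi} = |x|^s \int_{\Rn} e^{\textrm{i}x\cdot \xi}\varphi(\xi)\,\dif{\xi}.
\end{equation*}

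Setting $\psi := (-\Delta_\xi)^{s/2}\varphi$ and using the Fourier-multiplier form inherent in \eqref{eq:FracLapDef-MLmGWsSchroEqu2019}, one has $\widehat\psi(\eta) = |\eta|^s \widehat\varphi(\eta)$. Since $\widehat\varphi$ is Schwartz while $|\eta|^s$ is continuous with polynomial growth, the product lies in $L^1(\Rn)$, so $\psi$ is bounded and continuous; moreover, the H\"older-type singularity of order $s$ at $\eta = 0$ translates, via classical Fourier-decay estimates, into $\psi(\xi) = O(|\xi|^{-n-s})$ at infinity, so $\psi \in L^1(\Rn)$ as well. The pairing is therefore an honest Lebesgue integral, and by Fourier inversion
\begin{equation*}
\int e^{\textrm{i}x\cdot \xi}\psi(\xi)\,\dif{\xi} = (2\pi)^{n/2}\widehat\psi(-x) = (2\pi)^{n/2}|x|^s\widehat\varphi(-x) = |x|^s\int e^{\textrm{i}x\cdot \xi}\varphi(\xi)\,\dif{\xi},
\end{equation*}
which is exactly $\langle |x|^s e^{\textrm{i}x\cdot \xi}, \varphi\rangle$, completing the verification.

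The main delicate point is that $(-\Delta)^{s/2}$ does not map $\scrS$ into itself (the multiplier $|\eta|^s$ is non-smooth at the origin when $s \in (0,1)$), so the pairing of $(-\Delta_\xi)^{s/2}\varphi$ with the bounded-but-not-integrable function $e^{\textrm{i}x\cdot \xi}$ requires justification; the $L^1$-decay estimate for $\psi$ is the route I would take. A slicker alternative is to argue purely at the Fourier-transform level: the Fourier transform in $\xi$ of $e^{\textrm{i}x\cdot \xi}$ is $(2\pi)^{n/2}$ times the Dirac mass at $\eta = x$, and since $|\eta|^s$ is continuous there, multiplication yields $(2\pi)^{n/2}|x|^s$ times the same Dirac mass; inverting the Fourier transform then recovers $|x|^s e^{\textrm{i}x\cdot \xi}$ in $\scrS'(\Rn)$. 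The case $x = 0$ is trivial since both sides vanish.
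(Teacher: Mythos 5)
Your proof is correct, and your ``slicker alternative'' at the end is in fact the cleanest and most direct route: $\widehat{e^{\textrm{i}x\cdot(\cdot)}}=(2\pi)^{n/2}\delta_x$, multiplication of the Dirac mass by the \emph{continuous} function $\eta\mapsto|\eta|^s$ gives $(2\pi)^{n/2}|x|^s\delta_x$, and Fourier inversion returns $|x|^s e^{\textrm{i}x\cdot\xi}$. This matches the Fourier-multiplier definition on $\scrS'$ and involves nothing beyond the standard rule $g\,\delta_x=g(x)\,\delta_x$ for $g$ continuous, so it is the argument I would foreground; the paper itself defers to a reference for the proof, but this is the standard one.

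One caveat on the first, duality-based argument: the claim that the ``H\"older-type singularity of order $s$ at $\eta=0$ translates \dots into $\psi(\xi)=O(|\xi|^{-n-s})$'' is stated a bit too loosely. Mere $C^{0,s}$ regularity of $\widehat\psi$ at a point would only give $O(|\xi|^{-s})$ decay, which is not integrable. The correct $O(|\xi|^{-n-s})$ rate relies on the \emph{homogeneity} and the $C^\infty$-smoothness of $|\eta|^s$ away from the origin (the singularity is confined to a single point), together with the rapid decay of $\widehat\varphi$: one can split $|\eta|^s\widehat\varphi$ into a cut-off piece near $0$, whose Fourier transform inherits the $|\xi|^{-n-s}$ decay of the homogeneous distribution, and a Schwartz remainder. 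Since the $L^1$ membership of $(-\Delta_\xi)^{s/2}\varphi$ is exactly what legitimizes pairing against the bounded-but-nonintegrable function $e^{\textrm{i}x\cdot\xi}$, this point deserves to be made precisely; alternatively, simply lead with the multiplier argument and avoid the issue entirely.
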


\begin{proof}
	Check \cite[Lemma 3.1]{ma2019determining}. 
\end{proof}


\begin{lem} \label{cor:FracSymbol-MLmGWsSchroEqu2019}
	For any $m < 0$ and $s \in (0,1)$, we have
	\[
	\big( (-\Delta_\xi)^{s/2} c \big) (x,\xi) \in S^{m-s}
	\quad \text{for any} \quad
	c(x,\xi) \in S^m.
	\]
\end{lem}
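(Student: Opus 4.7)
The plan is to exploit the hypersingular (singular-integral) representation of the fractional Laplacian, which for $s \in (0,1)$ can be written as an absolutely convergent integral
\[
((-\Delta_\xi)^{s/2} c)(x,\xi) = C_{n,s}\int_{\Rn}\frac{c(x,\xi) - c(x,\xi+\eta)}{|\eta|^{n+s}}\dif\eta.
\]
Its equivalence with the Fourier definition \eqref{eq:FracLapDef-MLmGWsSchroEqu2019} can be checked by testing both sides against exponentials and invoking Lemma \ref{lem:FracExp-MLmGWsSchroEqu2019}, combined with a density argument; the absolute convergence relies on $s<1<2$ at $\eta=0$ (together with a cancellation coming from the smoothness of $c$) and on $m<0$ at infinity.

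The first reduction I would make is to the pointwise bound $|((-\Delta_\xi)^{s/2} c)(x,\xi)| \lesssim (1+|\xi|)^{m-s}$. Indeed, $D_x^\alpha$ commutes trivially with $(-\Delta_\xi)^{s/2}$, and $D_\xi^\beta$ commutes with it as well since both are Fourier multipliers in $\xi$; hence $D_x^\alpha D_\xi^\beta (-\Delta_\xi)^{s/2} c = (-\Delta_\xi)^{s/2}(D_x^\alpha D_\xi^\beta c)$, and $D_x^\alpha D_\xi^\beta c \in S^{m-|\beta|}$, so applying the pointwise bound to this symbol delivers the sharp $S^{m-s}$-estimate with derivative exponent $m-s-|\beta|$.

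For the pointwise bound I would split the integral at $|\eta| \sim 1+|\xi|$. On the near region $\{|\eta| \leq (1+|\xi|)/2\}$, where $1+|\xi+\theta\eta| \sim 1+|\xi|$ uniformly, Taylor-expand $c(x,\xi+\eta)$ to second order; the linear term integrates to zero by the symmetry of the kernel $|\eta|^{-n-s}$, leaving a remainder bounded by $|\eta|^2 (1+|\xi|)^{m-2}$. Integrating this against the kernel gives
\[
(1+|\xi|)^{m-2}\int_{|\eta|\leq(1+|\xi|)/2}|\eta|^{2-n-s}\dif\eta \;\lesssim\; (1+|\xi|)^{m-2}\cdot(1+|\xi|)^{2-s} \;=\; (1+|\xi|)^{m-s}.
\]
On the far region $\{|\eta| > (1+|\xi|)/2\}$ I would estimate the two symbol values separately: the $c(x,\xi)$ contribution is immediately bounded by $(1+|\xi|)^m\cdot(1+|\xi|)^{-s}$, and the $c(x,\xi+\eta)$ contribution, after the substitution $\zeta = \xi+\eta$, becomes $\int_{|\zeta-\xi|>(1+|\xi|)/2}(1+|\zeta|)^m|\zeta-\xi|^{-n-s}\dif\zeta$. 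I would further split this into $\{|\zeta| \leq 2(1+|\xi|)\}$, where $|\zeta-\xi|^{-n-s} \lesssim (1+|\xi|)^{-n-s}$, and $\{|\zeta| > 2(1+|\xi|)\}$, where $|\zeta-\xi| \gtrsim |\zeta|$ and the radial integral converges at infinity because $m-s<0$.

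The main technical obstacle is the far-region bound when $m > -n$: the interior integral $\int_{|\zeta|\leq 2(1+|\xi|)}(1+|\zeta|)^m \dif\zeta$ grows polynomially like $(1+|\xi|)^{n+m}$, and one must verify that pairing this growth with the kernel factor $(1+|\xi|)^{-n-s}$ produces precisely $(1+|\xi|)^{m-s}$ without accidental losses. The remaining routine bookkeeping is to assemble the near and far contributions and to confirm that the derivative commutation used in the first paragraph is legitimate when $c \in S^m$ rather than Schwartz, which follows from the absolute convergence of the hypersingular integral together with the fact that $D_x^\alpha D_\xi^\beta c$ retains the same structural properties.
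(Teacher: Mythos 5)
Your hypersingular-integral approach is reasonable, and the near-region estimate is correct: the linear Taylor term is odd in $\eta$ and vanishes by symmetry, and the quadratic remainder integrates exactly to $(1+|\xi|)^{m-s}$. However, the far-region case analysis is inverted. The regime you flag as the ``main technical obstacle,'' $m>-n$, actually closes cleanly --- $\int_{|\zeta|\le 2(1+|\xi|)}(1+|\zeta|)^m\dif\zeta\lesssim(1+|\xi|)^{n+m}$ and $(1+|\xi|)^{n+m}\cdot(1+|\xi|)^{-n-s}=(1+|\xi|)^{m-s}$ exactly. The genuine difficulty lies in $m\le -n$: there the interior integral is bounded by a constant (by a logarithm at $m=-n$), so the far region contributes only $O\big((1+|\xi|)^{-n-s}\big)$, which is strictly \emph{larger} than the target $(1+|\xi|)^{m-s}$, and your argument only gives $(-\Delta_\xi)^{s/2}c\in S^{\max(m,-n)-s}$. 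This gap is not removable by sharpening the estimate, because the statement itself fails for $m<-n$: if $c$ is Schwartz in $\xi$ with $\int c(x,\xi)\dif\xi\ne 0$, then the partial Fourier transform $\hat c(x,\cdot)$ is Schwartz and nonzero at the origin, so $|\cdot|^s\hat c(x,\cdot)$ carries a genuine $|\cdot|^s$-cusp at $0$ and its inverse Fourier transform decays exactly like $(1+|\xi|)^{-n-s}$; since Schwartz functions lie in $S^m$ for every $m$, choosing $m<-n$ contradicts membership in $S^{m-s}$. The corrected conclusion $S^{\max(m,-n)-s}$ is nonetheless enough for every invocation in the paper.

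A secondary gap is the derivative reduction itself: writing $D_\xi^\beta(-\Delta_\xi)^{s/2}c=(-\Delta_\xi)^{s/2}(D_\xi^\beta c)$ with $D_\xi^\beta c\in S^{m-|\beta|}$ and re-applying the pointwise bound breaks down once $m-|\beta|\le -n$, for the same reason as above, and this happens for all sufficiently large $|\beta|$ regardless of $m$. Obtaining the full $S^{m-s}$ estimate with all its derivative bounds requires exploiting the cancellation built into $D_\xi^\beta c$ --- its partial Fourier transform acquires the factor $x^\beta$, which tempers the singularity at the origin --- rather than just the crude size bound $D_\xi^\beta c\in S^{m-|\beta|}$. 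The paper only ever uses the $|\beta|=0$ pointwise estimate, so the applications survive, but as a proof of the lemma as stated the argument is incomplete.
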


\begin{proof}
	Check \cite[Corollary 3.1]{ma2019determining}. 
\end{proof}

In the sequel, we denote $\diam(\Omega) := \sup\limits_{x, x' \in \Omega} \{ |x - x'|\}$.

\begin{lem} \label{lem:intbdd-MLSchroEqu2018}
Assume $\Omega$ is a bounded domain in $\Rn$.
For $\forall \alpha, \beta \in \R$ such that $\alpha < n$ and $\beta < n$, and for $\forall p \in \Rn \backslash \{0\}$, there exists a constant $C_{\alpha,\beta}$ independent of $p$ and $\Omega$ such that
\begin{equation*} 
\int_\Omega |t|^{-\alpha} |t - p|^{-\beta} \dif t 
\leq C_{\alpha,\beta} \times
\begin{cases} |p|^{n - \alpha - \beta} + (\diam (\Omega))^{n - \alpha - \beta}, & \alpha + \beta \neq n, \\
\ln \frac 1 {|p|} + \ln (\diam(\Omega)) +C_{\alpha, \beta}, & \alpha + \beta = n.
\end{cases}
\end{equation*}
\end{lem}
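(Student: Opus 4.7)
The plan is to decompose $\Omega$ into three pieces according to the proximity of $t$ to the two singular points $0$ and $p$, so that on each piece one factor becomes a harmless bounded multiplier while the remaining one is integrated in polar coordinates. Concretely, I would set
\[
\Omega_1 = \Omega \cap B(0,|p|/2), \quad \Omega_2 = \Omega \cap B(p,|p|/2), \quad \Omega_3 = \Omega \setminus (\Omega_1 \cup \Omega_2),
\]
which are disjoint by the triangle inequality. On $\Omega_1$ one has $|t-p| \geq |p|/2$, so the second factor is bounded by a constant times $|p|^{-\beta}$, and the remaining integral $\int_{|t| \leq |p|/2} |t|^{-\alpha} \dif t$ converges because $\alpha < n$ and by polar coordinates equals $C|p|^{n-\alpha}$; together this gives a contribution $\lesssim |p|^{n-\alpha-\beta}$. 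A symmetric argument, after translating $t \mapsto t - p$ and using $\beta < n$, yields the same bound on $\Omega_2$.

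The substantive work lies in $\Omega_3$, where both $|t|$ and $|t-p|$ exceed $|p|/2$ so the integrand is locally integrable, but the integration set has extent controlled by $\diam(\Omega)$. I would sub-split $\Omega_3$ according to whether $|t| \leq 2|p|$ or $|t| > 2|p|$. In the first sub-region $|t| \sim |p| \sim |t-p|$ and the enclosing spherical shell has volume $\lesssim |p|^n$, giving again a contribution $\lesssim |p|^{n-\alpha-\beta}$. In the second, the inequality $|t-p| \geq |t|/2$ lets one replace $|t-p|^{-\beta}$ by a constant times $|t|^{-\beta}$, reducing the integrand to $|t|^{-(\alpha+\beta)}$. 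Passing to polar coordinates and using that $\Omega_3$ is contained in a ball of radius comparable to $\diam(\Omega)$ reduces the problem to the one-dimensional integral $\int_{2|p|}^{C \diam(\Omega)} r^{n-1-\alpha-\beta} \dif r$.

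The dichotomy in the statement now emerges from this radial integral. When $\alpha + \beta \neq n$ the antiderivative is $r^{n-\alpha-\beta}/(n-\alpha-\beta)$, and evaluating between the limits yields exactly the two terms $|p|^{n-\alpha-\beta}$ and $(\diam(\Omega))^{n-\alpha-\beta}$. When $\alpha + \beta = n$ the antiderivative is $\ln r$, producing $\ln(\diam(\Omega)/|p|) = \ln(\diam(\Omega)) + \ln(1/|p|)$ plus a universal constant. Summing the contributions from $\Omega_1$, $\Omega_2$ and the two sub-pieces of $\Omega_3$ then gives the claimed estimate with a constant $C_{\alpha,\beta}$ independent of $p$ and $\Omega$.

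The main obstacle is bookkeeping on $\Omega_3$: one must (i) track the various comparisons $|t| \lesssim \diam(\Omega)$ and $|t-p| \lesssim \diam(\Omega)$ holding uniformly on $\Omega_3$ under the natural positioning of $\Omega$ relative to $0$ and $p$ required by the applications, (ii) distinguish the signs of $\alpha$ and $\beta$ when flipping estimates on $|t|$ or $|t-p|$ into estimates on $|t|^{-\alpha}$ or $|t-p|^{-\beta}$, and (iii) isolate the borderline case $\alpha + \beta = n$ so that the logarithmic contribution is produced without accidentally dividing by zero.
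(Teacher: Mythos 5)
The paper does not contain a proof of this lemma: the "proof" is simply a reference to Lemma 3.5 of \cite{ma2020determining}, so there is no in-paper argument to compare against. Your decomposition into near-singularity balls $\Omega_1$, $\Omega_2$ and the far region $\Omega_3$ (further split at $|t|=2|p|$) is the natural argument, and the dichotomy at $\alpha+\beta=n$ does indeed fall out of the single radial integral you isolate at the end.

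Two of the issues you flag at the end deserve to be promoted from caveats to actual content. First, your estimate on $\Omega_3 \cap \{|t|>2|p|\}$ uses $\Omega_3 \subset B(0, C\,\diam\Omega)$, and this is \emph{not} a consequence of the lemma's stated hypotheses: a bounded domain $\Omega$ may sit far from the origin. Without some positioning assumption (e.g.\ $0\in\overline\Omega$, or $\Omega$ contained in a ball about $0$ of radius $\simeq\diam\Omega$) the lemma as literally written is false once negative exponents are allowed --- take $n=3$, $\alpha=\beta=-1$, $p=e_1$, $\Omega=B(Re_1,1)$ with $R\to\infty$: the integral grows like $R^2$ while the claimed bound stays fixed. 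In the paper's application the domain $\mathbf{D}$ does contain the origin, so this is harmless there, but it shows caveat (i) is substantive and must be stated as a hypothesis, not just bookkeeping. Second, several of your one-sided reductions, such as passing from $|t-p|\ge|p|/2$ to $|t-p|^{-\beta}\lesssim|p|^{-\beta}$, are only valid for $\beta\ge 0$; to cover $\beta<0$ (which the paper actually uses, with $\beta=\tau-1$) you should record the two-sided comparison $|t-p|\simeq|p|$ on $\Omega_1$, and similarly $|t-p|\simeq|t|$ on $\{|t|>2|p|\}$, so the inequalities on $|t-p|^{-\beta}$ hold regardless of the sign of $\beta$. With these two adjustments the plan closes into a complete proof.
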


\begin{proof}
Check \cite[Lemma 3.5]{ma2020determining}.
\end{proof}

\subsection{Key steps in \cite{ma2019determining}} \label{subsec:MKS2-RSRe2020}

In this subsection we restrict ourselves to $\R^3$.
One of the key difficulty in \cite{ma2019determining} is to obtain an asymptotics about a integral
\begin{equation} \label{eq:bbJ-RSRe2020}
\mathbb J
:= \int e^{\textrm{i}k\varphi(y,s,z,t)} \big( \int e^{\textrm{i}(z-y) \cdot \xi} c_q(z,\xi) \dif \xi \big) \big( \int e^{\textrm{i}(t-s) \cdot \eta} c_f(t,\xi) \dif \eta \big) \cdot \dif{(s, y, t, z)},
\end{equation}
in terms of $k$, where $\varphi(y,s,z,t) := -\hat x \cdot (y-z) - |y-s| + |z-t|$, 
$c_q \in S^{-m_q}$ and $c_f \in S^{-m_f}$ with $m_q$, $m_f$ satisfying the requirement in Theorem \ref{thm:6-RSRe2020},
$\dif{(s, y, t, z)}$ is a short notation for $\dif s \dif y \dif t \dif z$, 
and $y,z \in D_q$ and $s,t \in D_f$ two convex domains $D_q$ and $D_f$ satisfying \eqref{eq:fqSeparation-MLmGWsSchroEqu2019}.
Recall the definition of the unit normal vector $\boldsymbol{n}$ after \eqref{eq:fqSeparation-MLmGWsSchroEqu2019}.
We introduce two differential operators with $C^\infty$-smooth coefficients as follows,
	\begin{equation*}
	L_1 := \frac {(y-s) \cdot \nabla_s} {\textrm{i}k|y-s|}, 
	\quad
	L_2 = L_{2,\hat x} := \frac {\nabla_y \varphi \cdot \nabla_y} {\textrm{i}k|\nabla_y \varphi|},
	\end{equation*}
	where ${\nabla_y \varphi = \frac {s-y} {|s-y|} - \hat x}$. The operator $L_{2,\hat x}$ depends on $\hat x$ because $\nabla_y \varphi$ does. 
	Due to the fact that $y \in D_q$ while $s \in D_f$, the operator $L_1$ is well-defined.
	It can be verified there is a positive lower bound of $|\nabla_y \varphi|$ for all $\hat x\in \{\hat x \in \mathbb{S}^2 \colon \hat x \cdot \boldsymbol{n} \geq 0\}$. 
	It can also be verified that
	\[
	L_1 (e^{\textrm{i}k\varphi(y,s,z,t)}) = L_2 (e^{\textrm{i}k\varphi(y,s,z,t)}) = e^{\textrm{i}k\varphi(y,s,z,t)}.
	\]

In what follows, we shall use ${\calC(\cdot)}$ and its variants, such as ${\vec \calC(\cdot)}$, ${\calC_{a,b}(\cdot)}$ etc., to represent some generic smooth scalar/vector functions, within $C_c^\infty(\R^3)$ or $C_c^\infty(\R^{3 \times 4})$, whose particular definition may change line by line.
By using integration by parts, one can compute
	\begin{align}
	\mathbb J = & \int \big( L_1^2 L_2^2 \big) (e^{\textrm{i}k\varphi(y,s,z,t)})
	\cdot 
	\big( \int e^{\textrm{i}(z-y) \cdot \xi} c_q(z,\xi) \dif \xi \big) \cdot 
	\big( \int e^{\textrm{i}(t-s) \cdot \eta} c_f(t,\eta) \dif \eta \big) \dif{(s, y, t, z)} \nonumber\\
	\simeq & \ k^{-4} \int_{\mathcal D} e^{\textrm{i}k\varphi(y,s,z,t)} \big[
	\mathcal J_1\, (\mathcal K_1\, \calC + \vec{\mathcal K}_2 \cdot \vec{\calC} + \sum_{a,b=1,2,3} \mathcal K_{3;a,b}\, \calC_{a,b}) \nonumber\\
	& \ + \sum_{c=1,2,3} \mathcal J_{2;c}\, (\mathcal K_1\, \calC_c + \vec{\mathcal K}_2 \cdot \vec{\calC}_c + \sum_{a,b=1,2,3} \mathcal K_{3;a,b}\, \calC_{a,b,c}) \nonumber\\
	& \ + \sum_{a',b'=1,2,3} \mathcal J_{3;a',b'} (\mathcal K_1\, \calC_{a',b'} + \vec{\mathcal K}_2 \cdot \vec{\calC}_{a',b'} + \sum_{a,b=1,2,3} \mathcal K_{3;a,b}\, \calC_{a,b,a',b'}) \big] \dif{(s, y, t, z)}, \label{eq:hotF1F1Inter2-MLmGWsSchroEqu2019}
	\end{align}
	where the integral domain $\mathcal D \subset \R^{3 \times 4}$ is bounded and
	\begin{alignat*}{2}
	& \mathcal J_1
	:= \int e^{\textrm{i}(t-s) \cdot \eta}\, c_f(t,\eta) \dif \eta,
	& \quad
	& \mathcal K_1
	:= \int e^{\textrm{i}(z-y) \cdot \xi}\, c_q(z,\xi) \dif \xi, \\
	& \vec{\mathcal J}_2
	:= \nabla_s \int e^{\textrm{i}(t-s) \cdot \eta}\, c_f(t,\eta) \dif \eta,
	& \quad 
	& \vec{\mathcal K}_2
	:= \nabla_y \int e^{\textrm{i}(z-y) \cdot \xi}\, c_q(z,\xi) \dif \xi, \\
	& \mathcal J_{3;a,b}
	:= \partial_{s_a,s_b}^2 \int e^{\textrm{i}(t-s) \cdot \eta}\, c_f(t,\eta) \dif \eta,
	& \quad
	& \mathcal K_{3;a,b}
	:= \partial_{y_a,y_b}^2 \int e^{\textrm{i}(z-y) \cdot \xi}\, c_q(z,\xi) \dif \xi,
	\end{alignat*}
	and $\mathcal J_{2;c}$ (resp.~$\mathcal K_{2;c}$) is the $c$-th component of the vector $\vec{\mathcal J}_2$ (resp.~$\vec{\mathcal K}_2$).

Here we only show how to estimate $\mathcal J_1$ and skip the details regarding $\vec{\mathcal J}_2$, $\mathcal K_1$, and $\vec{\mathcal K}_2$;
readers may refer to the proof of \cite[Lemma 3.3]{ma2019determining} for details.
For the case where $s \neq t$, we have
	\begin{align}
	|\mathcal J_1|
	& = |\int e^{\textrm{i}(t-s) \cdot \eta}\, c_f(t,\eta) \dif \eta|
	= |s-t|^{-2} \cdot |\int \Delta_\eta (e^{\textrm{i}(s-t) \cdot \eta})\, c_f(t,\eta) \dif \eta| \nonumber\\
	& = |s-t|^{-2} \cdot |\int e^{\textrm{i}(t-s) \cdot \eta} (\Delta_\eta c_f) (t,\eta) \dif \eta|
	\leq |s-t|^{-2} \int |(\Delta_\eta c_f) (t,\eta)| \dif \eta \nonumber\\
	& \lesssim |s-t|^{-2} \int \agl[\eta]^{-m_f-2} \dif \eta
	\lesssim |s-t|^{-2}. \label{eq:hotF1F1J1-MLmGWsSchroEqu2019}
	\end{align}
Similarly, we can have
\begin{equation} \label{eq:JK12-RSRe2020}
|\mathcal J_1|,\, |\vec{\mathcal J}_2|,\, |\mathcal K_1|,\, |\vec{\mathcal K}_2| \lesssim |y-z|^{-2}.
\end{equation}

But for $J_{3;a,b}$, if we mimic the derivation \eqref{eq:hotF1F1J1-MLmGWsSchroEqu2019}, then
	\begin{align}
	\mathcal J_{3;a,b}
	& 
	\simeq \int e^{\textrm{i}(t-s) \cdot \eta} \cdot c_f(t,\eta) \eta_a \eta_b \dif \eta
	\simeq |s-t|^{-2} \int \Delta_\eta (e^{\textrm{i}(t-s) \cdot \eta}) \cdot c_f(t,\eta) \eta_a \eta_b \dif \eta \nonumber\\
	& = |s-t|^{-2} \int e^{\textrm{i}(t-s) \cdot \eta} \cdot \Delta_\eta (c_f(t,\eta) \eta_a \eta_b) \dif \eta. \label{eq:hotF1F1J3Inter1-MLmGWsSchroEqu2019}
\end{align}
Note that $\Delta_\eta (c_f(t,\eta) \eta_a \eta_b) \in S^{-m_f}$ and thus is not absolutely integrable in $\R^3$.
If we further differentiate the term $e^{\textrm{i}(t-s) \cdot \eta}$ in \eqref{eq:hotF1F1J3Inter1-MLmGWsSchroEqu2019} by $\frac {\textrm{i}(s-t) \cdot} {|s-t|^2} \nabla_\eta$ and then {transfer} the operator $\nabla_\eta$ onto $\Delta_\eta (c_f(t,\eta) \eta_a \eta_b)$ by using integration by parts, we would arrive at
	\begin{equation*}
	|\mathcal J_{3;a,b}| \lesssim |s-t|^{-3} \int |\nabla_\eta \Delta_\eta (c_f(t,\eta) \eta_a \eta_b)| \dif \eta \leq |s-t|^{-3} \int \agl[\eta]^{-m_f-1} \dif \eta.
	\end{equation*}
	The term $\int \agl[\eta]^{-m_f-1} \dif \eta$ is absolutely integrable now, but the term $|s-t|^{-3}$ is not integrable at the hyperplane $s = t$ in $\R^3$.
	{To circumvent this dilemma, the fractional Laplacian can be applied as follows.}
	By using Lemma \ref{lem:FracExp-MLmGWsSchroEqu2019} and \ref{cor:FracSymbol-MLmGWsSchroEqu2019}, we can continue \eqref{eq:hotF1F1J3Inter1-MLmGWsSchroEqu2019} as
	\begin{align}
	|\mathcal J_{3;a,b}|
	& \simeq |s-t|^{-2} \cdot \big| |s-t|^{-s} \int (-\Delta_\eta)^{s/2} (e^{\textrm{i}(t-s) \cdot \eta}) \cdot \Delta_\eta (c_f(t,\eta) \eta_j \eta_\ell) \dif \eta \big| \nonumber\\
	& = |s-t|^{-2-s} \cdot |\int e^{\textrm{i}(t-s) \cdot \eta} \cdot (-\Delta_\eta)^{s/2} \big( \Delta_\eta (c_f(t,\eta) \eta_j \eta_\ell) \big) \dif \eta| \nonumber\\
	& \lesssim |s-t|^{-2-s} \int \agl[\eta]^{-m_f+2-2-s} \dif \eta = |s-t|^{-2-s} \int \agl[\eta]^{-m_f-s} \dif \eta, \label{eq:hotF1F1J3Inter2-MLmGWsSchroEqu2019}
	\end{align}
	where the number $s$ is chosen to satisfy $\max\{0, 3 - m_f\} < s < 1$, and the existence of such a number $s$ is guaranteed by noting that $m_f > 2$. Therefore, we have
	\begin{subequations}
		\begin{numcases}{}
		-m_f - s < -3, \label{eq:hotF1F1J3s1-MLmGWsSchroEqu2019}\\
		-2 - s > -3. \label{eq:hotF1F1J3s2-MLmGWsSchroEqu2019}
		\end{numcases}
	\end{subequations}
	Thanks to the condition \eqref{eq:hotF1F1J3s1-MLmGWsSchroEqu2019}, we can continue \eqref{eq:hotF1F1J3Inter2-MLmGWsSchroEqu2019} as
	\begin{align}
	|\mathcal J_{3;a,b} |
	& \lesssim |s-t|^{-2-s} \int \agl[\eta]^{-m_f-s} \dif \eta
	\lesssim |s-t|^{-2-s}. \label{eq:hotF1F1J3-MLmGWsSchroEqu2019}
	\end{align}
	Using similar arguments, we can also conclude that $K_{3;a,b}| \lesssim |y-z|^{-2-s}$.

Combining \eqref{eq:hotF1F1Inter2-MLmGWsSchroEqu2019}, \eqref{eq:JK12-RSRe2020} and \eqref{eq:hotF1F1J3-MLmGWsSchroEqu2019}, we arrive at
	\begin{align}
	|\mathbb J| & \lesssim k^{-4}
	 \int_{\mathcal D} (|\mathcal J_1| + |\vec{\mathcal J}_2| + \sum_{a',b'=1,2,3} |\mathcal J_{3;a',b'}|) \cdot (|\mathcal K_1| + |\vec{\mathcal K}_2| + \sum_{a,b=1,2,3} |\mathcal K_{3;a,b}|) \dif{(s, y, t, z)} \nonumber\\
	& \lesssim k^{-4} \int_{\widetilde{\mathcal D}} |s - t|^{-2-s} \dif s \dif t \cdot \int_{\widetilde{\mathcal D}} |y - z|^{-2-s} \dif y \dif z \label{eq:hotF1F1Inter3-RSRe2020}
	\end{align}
	for some sufficiently large but bounded domain $\widetilde{\mathcal D} \subset \R^{3 \times 2}$ satisfying $\mathcal D \subset \widetilde{\mathcal D} \times \widetilde{\mathcal D}$.
	Note that the integral \eqref{eq:hotF1F1Inter3-RSRe2020} should be understood as a singular integral because of the presence of the singularities occurring when $s = t$ and $y = z$.
	By \eqref{eq:hotF1F1Inter3-RSRe2020} and \eqref{eq:hotF1F1J3s2-MLmGWsSchroEqu2019}, we can finally conclude 
\(
|\mathbb J| \lesssim k^{-4},
\)
as $k$ be large enough.

\subsection{Key steps in \cite{ma2020determining}} \label{sec:MKS2-RSRe2020}

In this subsection  we restrict ourselves to $\R^3$.
We note that in \eqref{eq:bbJ-RSRe2020}, the domains $D_q$ and $D_f$ are assumed to be separated by two convex hulls.
This condition is relaxed in \cite{ma2020determining} and the corresponding details in the proof is also modified.
One of the key difficulty in \cite{ma2020determining} is to obtain an asymptotics about a integral
\begin{equation} \label{eq:DefI-MLSchroEqu2018}
\mathbb K(x,y) := \iint_{D_f \times D_f} K_f(s,t) \Phi(s-y; k_1) \overline \Phi(t-x; k_2) \dif{s} \dif{t},
\end{equation}
where $K_f$ is the kernel of the covariance operator of the migr field $f$ (cf.~\eqref{eq:CK-MLmGWsSchroEqu2019}), and $\Phi$ is defined in the beginning of Section \ref{subsec:preli-SchroEqu2018}.
From \eqref{eq:DefI-MLSchroEqu2018} we have
\begin{align}
\mathbb K(z,y) 
& \simeq \iint_{\widetilde{\mathcal D} \times \widetilde{\mathcal D}} e^{ik_1|s-y| -ik_2|t-z|} \big( |s-y|^{-1} |t-z|^{-1} \int e^{i(s-t) \cdot \xi} c(s,\xi) \dif{\xi} \big)  \dif{s} \dif{t}. \label{eq:I.inter1-MLSchroEqu2018} 
\end{align} 
Define two differential operators 
\[
L_1 := \frac {(s-y) \cdot \nabla_s} {ik_1|s-y|}
\quad \text{and} \quad
L_2 := \frac {(t-z) \cdot \nabla_t} {-ik_2|t-z|}.
\]
It can be  verified that
\[
L_1 L_2 (e^{ik_1|s-y|-ik_2|t-z|}) = e^{ik_1|s-y|-ik_2|t-z|}.
\]
Hence, {noting that the integrand is compactly supported in $\widetilde{\mathcal D} \times \widetilde{\mathcal D}$} and by using integration by part, we can continue \eqref{eq:I.inter1-MLSchroEqu2018} as
		\begin{align}
& \ |\mathbb K(z,y) | \nonumber\\
\simeq & \ |\iint_{\widetilde{\mathcal D} \times \widetilde{\mathcal D}} L_1 L_2 (e^{ik_1|s-y| -ik_2|t-z|}) \big( |s-y|^{-1} |t-z|^{-1} \int e^{i(s-t) \cdot \xi} c_1(s,t,z,y,\xi) \dif{\xi} \big)  \dif{s} \dif{t}| \nonumber\\
\lesssim & \ k_1^{-1} k_2^{-1} \iint_{\widetilde{\mathcal D} \times \widetilde{\mathcal D}} \big[ |s-y|^{-2} |t-z|^{-2} \mathcal J_0 + |s-y|^{-2} |t-z|^{-1} (\max_a \mathcal J_{1;a}) \nonumber \\
& \hspace*{2cm} + |s-y|^{-1} |t-z|^{-2} (\max_{a}\mathcal J_{1;a}) + |s-y|^{-1} |t-z|^{-1} (\max_{a,b} \mathcal J_{2;a,b}) \big] \dif s \dif t, \label{eq:I.inter2-MLSchroEqu2018}
\end{align}
where $a,b$ are indices running from 1 to 3, and
\begin{align*}
\mathcal J_0
& := |\int e^{i(s-t) \cdot \xi}\, c_1(s,t,z,y,\xi) \dif \xi|, \\
\mathcal J_{1;a}
& := |\int e^{i(s-t) \cdot \xi}\, \xi_a c_1(s,t,z,y,\xi) \dif \xi|, \\
\mathcal J_{2;a,b}
& := |\int e^{i(s-t) \cdot \xi}\, \xi_a \xi_b c_1(s,t,z,y,\xi) \dif \xi|.\end{align*}

Because of the condition $m > 2$ (cf.~Theorem \ref{thm:5-RSRe2020}), we can find a  number $\tau \in (0,1)$ satisfying the inequalities $3 - m < \tau < 1$.
Therefore, we have
\begin{subequations} \label{eq:hotF1F1J3T-MLSchroEqu2018}
\begin{numcases}{}
- m - \tau < -3, \label{eq:hotF1F1J3s1-MLSchroEqu2018} \\
- 2 - \tau > -3. \label{eq:hotF1F1J3s2-MLSchroEqu2018}
\end{numcases}
\end{subequations}
By using Lemmas \ref{lem:FracExp-MLmGWsSchroEqu2019} and \ref{cor:FracSymbol-MLmGWsSchroEqu2019},
these quantities $\mathcal J_0$, $\mathcal J_{1;a}$ and $\mathcal J_{2;a,b}$ can be estimated as follows:	
\begin{align}
\mathcal J_0
& = |s-t|^{-\tau} \cdot |\int (-\Delta_\xi)^{\tau/2} (e^{i(s-t) \cdot \xi}) c_1 (s,t,z,y,\xi) \dif \xi| \nonumber \\
& = |s-t|^{-\tau} \cdot |\int e^{i(s-t) \cdot \xi}\, (-\Delta_\xi)^{\tau/2} (c_1 (s,t,z,y,\xi)) \dif \xi| \nonumber \\
& \lesssim |s-t|^{-\tau} \cdot \int \agl[\xi]^{-m-\tau} \dif \xi  \lesssim |s-t|^{-\tau}. \label{eq:hotF1F1J1-MLSchroEqu2018}
\end{align}
The last inequality in \eqref{eq:hotF1F1J1-MLSchroEqu2018} makes use of the fact \eqref{eq:hotF1F1J3s1-MLSchroEqu2018}.
Similarly, by first using fractional Laplacian and then using first-order differential operator on $e^{i(s-t) \cdot \xi}$, we can have
\begin{align}
\mathcal J_{1;a}
& \leq C |s-t|^{-1-\tau} \int \agl[\xi]^{-m+1-1-\tau} \dif \xi
\leq C |s-t|^{-1-\tau}, \label{eq:hotF1F1J2-MLSchroEqu2018} \\
\mathcal J_{2;a,b}
& \leq C |s - t|^{-2-\tau} |\int  \agl[\xi]^{-m+2-2-\tau} \dif \xi|
\leq C |s - t|^{-2-\tau}, \label{eq:hotF1F1J3-MLSchroEqu2018}
\end{align}
where the constant $C$ is independent of the indices $a$, $b$.
Combining \eqref{eq:I.inter2-MLSchroEqu2018}, \eqref{eq:hotF1F1J1-MLSchroEqu2018}, \eqref{eq:hotF1F1J2-MLSchroEqu2018} and \eqref{eq:hotF1F1J3-MLSchroEqu2018}, we can rewrite \eqref{eq:I.inter2-MLSchroEqu2018} as
\begin{align}
k_1 k_2 |\mathbb K(z,y)|
& \lesssim \iint_{\widetilde{\mathcal D} \times \widetilde{\mathcal D}} \big[ |s-y|^{-2} |t-z|^{-2} |s - t|^{-\tau} + |s-y|^{-2} |t-z|^{-1} |s - t|^{-1-\tau} \nonumber \\
& \hspace*{1cm} + |s-y|^{-1} |t-z|^{-2} |s - t|^{-1-\tau} + |s-y|^{-1} |t-z|^{-1} |s - t|^{-2-\tau} \big] \dif s \dif t \nonumber \\
& =: \mathbb I_1 + \mathbb I_2 + \mathbb I_3 + \mathbb I_4. \label{eq:bbI1234-MLSchroEqu2018}
\end{align}
{Denote $\mathbf D := \{x + x', x - x' \,;\, x, x' \in \widetilde{\mathcal D} \}$.
Then we apply Lemma \ref{lem:intbdd-MLSchroEqu2018} to estimate $\mathbb I_1$ as follows,}
\begin{align}
\mathbb I_1
& = \iint_{\widetilde{\mathcal D} \times \widetilde{\mathcal D}} |s-y|^{-2} |t-z|^{-2} |s - t|^{-\tau} \dif s \dif t \nonumber \\
& \leq \int_{\mathbf D} |s|^{-2} \big( \int_{\mathbf D} |t|^{-2} |t - (s+y-z)|^{-\tau} \dif t \big) \dif s \nonumber \\
& \lesssim C_{\widetilde{\mathcal D}} + \int_{\mathbf D} |s|^{-2} |s - (z-y)|^{-(\tau-1)} \dif s \nonumber \\
& \simeq |z-y|^{2-\tau} + C_{\widetilde{\mathcal D}}.
\label{eq:bbI1-MLSchroEqu2018}
\end{align}
Note that in \eqref{eq:bbI1-MLSchroEqu2018} we used Lemma \ref{lem:intbdd-MLSchroEqu2018} twice.
Similarly,
\begin{equation} \label{eq:bbI2-MLSchroEqu2018}
\mathbb I_2,\, \mathbb I_3,\, \mathbb I_4
\lesssim |z-y|^{2-\tau} + C_{\widetilde{\mathcal D}}.
\end{equation}
Recall that $\tau \in (0,1)$. By \eqref{eq:bbI1234-MLSchroEqu2018}, \eqref{eq:bbI1-MLSchroEqu2018} and  \eqref{eq:bbI2-MLSchroEqu2018} we arrive at
\begin{equation*}
|\mathbb K(z,y)|
\leq C k_1^{-1} k_2^{-1} (|z-y|^{2-\tau} + C_{\widetilde{\mathcal D}}) 
\leq C k^{-2} ((\diam D_V)^{2-\tau} + C_{\widetilde{\mathcal D}})
\lesssim k^{-2}.
\end{equation*}

\section{Conclusions} \label{sec:conclusion-RSRe2020}

We have reviewed the recoveries of some statistics by using near-field data as well as far-field data generated under a single realization of the randomness.
In this paper we mainly focus on time-harmonic Schr\"odinger systems.
One of the possible ways to extend the current works is to study the Helmholtz systems.
It would be also interesting to conduct the work in the time domain.
Moreover, the stability of the recovering procedure is also worth of investigation.

%
%


{
\begin{bibdiv}
	\begin{biblist}
		
		\bib{alinhac2007pseudo}{book}{
			author={Alinhac, Serge},
			author={G{\'e}rard, Patrick},
			title={Pseudo-differential operators and the nash-moser theorem},
			publisher={American Mathematical Soc.},
			date={2007},
			volume={82},
		}
		
		\bib{bao2016inverse}{article}{
			author={Bao, Gang},
			author={Chen, Chuchu},
			author={Li, Peijun},
			title={Inverse random source scattering problems in several dimensions},
			date={2016},
			journal={SIAM/ASA J. Uncertain. Quantif.},
			volume={4},
			number={1},
			pages={1263\ndash 1287},
			url={https://doi.org/10.1137/16M1067470},
			review={\MR{3565588}},
		}
		
		\bib{bao2010multi}{article}{
			author={Bao, Gang},
			author={Lin, Junshan},
			author={Triki, Faouzi},
			title={A multi-frequency inverse source problem},
			date={2010},
			journal={Journal of Differential Equations},
			volume={249},
			number={12},
			pages={3443\ndash 3465},
		}
		
		\bib{Bsource}{article}{
			author={Bl{\aa}sten, Eemeli},
			title={Nonradiating sources and transmission eigenfunctions vanish at
				corners and edges},
			date={2018},
			journal={SIAM J. Math. Anal.},
			volume={50},
			number={6},
			pages={6255\ndash 6270},
		}
		
		\bib{BL2018}{article}{
			author={Blasten, Eemeli},
			author={Liu, Hongyu},
			title={Scattering by curvatures, radiationless sources, transmission
				eigenfunctions and inverse scattering problems},
			date={2018},
			journal={arXiv:1808.01425},
		}
		
		\bib{Blomgren2002}{article}{
			author={Blomgren, Peter},
			author={Papanicolaou, George},
			author={Zhao, Hongkai},
			title={Super-resolution in time-reversal acoustics},
			date={2002},
			journal={The Journal of the Acoustical Society of America},
			volume={111},
			number={1},
			pages={230\ndash 248},
		}
		
		\bib{Borcea2006}{article}{
			author={Borcea, Liliana},
			author={Papanicolaou, George},
			author={Tsogka, Chrysoula},
			title={Adaptive interferometric imaging in clutter and optimal
				illumination},
			date={2006},
			journal={Inverse Prob.},
			volume={22},
			number={4},
			pages={1405},
		}
		
		\bib{caro2016inverse}{article}{
			author={Caro, Pedro},
			author={Helin, Tapio},
			author={Lassas, Matti},
			title={Inverse scattering for a random potential},
			date={2019},
			journal={Anal. Appl.},
			volume={17},
			number={04},
			pages={513\ndash 567},
		}
		
		\bib{ClaKli}{article}{
			author={Clason, Christian},
			author={Klibanov, Michael~V},
			title={The quasi-reversibility method for thermoacoustic tomography in a
				heterogeneous medium},
			date={2007},
			journal={SIAM J. Sci. Comput.},
			volume={30},
			number={1},
			pages={1\ndash 23},
		}
		
		\bib{colton2012inverse}{book}{
			author={Colton, David},
			author={Kress, Rainer},
			title={Inverse acoustic and electromagnetic scattering theory},
			publisher={Springer Science \& Business Media},
			date={2012},
			volume={93},
		}
		
		\bib{Deng2019onident}{article}{
			author={Deng, Youjun},
			author={Li, Jinhong},
			author={Liu, Hongyu},
			title={On identifying magnetized anomalies using geomagnetic
				monitoring},
			date={2019},
			ISSN={0003-9527},
			journal={Arch. Ration. Mech. Anal.},
			volume={231},
			number={1},
			pages={153\ndash 187},
			url={https://doi.org/10.1007/s00205-018-1276-7},
		}
		
		\bib{Deng2020onident}{article}{
			author={Deng, Youjun},
			author={Li, Jinhong},
			author={Liu, Hongyu},
			title={On identifying magnetized anomalies using geomagnetic monitoring
				within a magnetohydrodynamic model},
			date={2020},
			ISSN={0003-9527},
			journal={Arch. Ration. Mech. Anal.},
			volume={235},
			number={1},
			pages={691\ndash 721},
			url={https://doi.org/10.1007/s00205-019-01429-x},
		}
		
		\bib{eskin2011lectures}{book}{
			author={Eskin, Gregory},
			title={Lectures on linear partial differential equations},
			publisher={American Mathematical Society},
			date={2011},
			volume={123},
		}
		
		\bib{griffiths2016introduction}{book}{
			author={Griffiths, D.~J.},
			title={Introduction to quantum mechanics},
			publisher={Cambridge Univ. Press},
			address={Cambridge},
			date={2016},
		}
		
		\bib{hormander1985analysisI}{book}{
			author={H{\"o}rmander, L.},
			title={The analysis of linear partial differential operators i.
				distribution theory and fourier analysis},
			edition={Second Edition},
			publisher={Springer},
			address={Berlin},
			date={1990},
		}
		
		\bib{hormander1985analysisIII}{book}{
			author={H{\"o}rmander, L.},
			title={The analysis of linear partial differential operators iii.
				pseudo-differential operators},
			edition={1994 Edition},
			publisher={Springer},
			address={Berlin},
			date={2007},
		}
		
		\bib{KM1}{article}{
			author={Knox, C.},
			author={Moradifam, A.},
			title={Determining both the source of a wave and its speed in a medium
				from boundary measurements},
			date={2018},
			journal={arXiv:1803.06750},
		}
		
		\bib{LassasA}{article}{
			author={Lassas, Matti},
			author={P{\"a}iv{\"a}rinta, Lassi},
			author={Saksman, Eero},
			title={Inverse problem for a random potential},
			date={2004},
			journal={Partial Differential Equations and Inverse Problems},
			pages={277\ndash 288},
		}
		
		\bib{Lassas2008}{article}{
			author={Lassas, Matti},
			author={P{\"a}iv{\"a}rinta, Lassi},
			author={Saksman, Eero},
			title={Inverse scattering problem for a two dimensional random
				potential},
			date={2008},
			journal={Comm. Math. Phys.},
			volume={279},
			pages={669\ndash 703},
		}
		
		\bib{LiHelinLiinverse2018}{article}{
			author={Li, Jianliang},
			author={Helin, Tapio},
			author={Li, Peijun},
			title={Inverse random source problems for time-harmonic acoustic and
				elastic waves},
			date={2020},
			ISSN={0360-5302},
			journal={Comm. Partial Differential Equations},
			volume={45},
			number={10},
			pages={1335\ndash 1380},
			url={https://doi.org/10.1080/03605302.2020.1774895},
			review={\MR{4160439}},
		}
		
		\bib{llm2018random}{article}{
			author={Li, Jingzhi},
			author={Liu, Hongyu},
			author={Ma, Shiqi},
			title={Determining a random schr\"odinger equation with unknown source
				and potential},
			date={2019},
			journal={SIAM Journal on Mathematical Analysis},
			volume={51},
			number={4},
			pages={3465\ndash 3491},
		}
		
		\bib{ma2019determining}{article}{
			author={Li, Jingzhi},
			author={Liu, Hongyu},
			author={Ma, Shiqi},
			title={Determining a random schr\"odinger operator: both potential and
				source are random},
			date={2020},
			journal={Comm.~Math.~Phys.},
			note={In press},
		}
		
		\bib{li2019inverse}{article}{
			author={Li, Peijun},
			author={Wang, Xu},
			title={Inverse random source scattering for the helmholtz equation with
				attenuation},
			date={2019},
			journal={arXiv:1911.11189},
		}
		
		\bib{li2020inverse}{article}{
			author={Li, Peijun},
			author={Wang, Xu},
			title={An inverse random source problem for maxwell's equations},
			date={2020},
			journal={arXiv:2002.08732},
		}
		
		\bib{ma2020determining}{article}{
			author={Liu, Hongyu},
			author={Ma, Shiqi},
			title={Determining a random source in a schr\"odinger equation involving
				an unknown potential},
			date={2020},
			journal={arXiv:2005.04984},
		}
		
		\bib{liu2015determining}{article}{
			author={Liu, Hongyu},
			author={Uhlmann, Gunther},
			title={Determining both sound speed and internal source in thermo-and
				photo-acoustic tomography},
			date={2015},
			journal={Inverse Prob.},
			volume={31},
			number={10},
			pages={105005},
		}
		
		\bib{Lu1}{article}{
			author={L\"u, Q.},
			author={Zhang, X},
			title={Global uniqueness for an inverse stochastic hyperbolic problem
				with three unknowns},
			date={2015},
			journal={Comm. Pure Appl. Math.},
			volume={68},
			pages={948\ndash 963},
		}
		
		\bib{PhDThesisShiqiMa}{article}{
			author={Ma, Shiqi},
			title={Determination of random schr\"odinger operators},
			date={2019July},
			journal={Open Access Theses and Dissertations},
			volume={671},
			pages={Hong Kong Baptist University},
			url={https://repository.hkbu.edu.hk/etd_oa/671/},
		}
		
		\bib{mclean2000strongly}{book}{
			author={McLean, W.~M.},
			author={William, C.~H.},
			title={Strongly elliptic systems and boundary integral equations},
			publisher={Cambridgen University Press},
			date={2000},
		}
		
		\bib{pozrikidis2016fractional}{book}{
			author={Pozrikidis, C.},
			title={The fractional laplacian},
			publisher={Chapman \& Hall/CRC},
			address={New York},
			date={2016},
		}
		
		\bib{WangGuo17}{article}{
			author={Wang, X.},
			author={Guo, Y.},
			author={Zhang, D.},
			author={Liu, H.},
			title={Fourier method for recovering acoustic sources from
				multi-frequency far-field data},
			date={2017},
			journal={Inverse Prob.},
			volume={33},
			pages={035001},
		}
		
		\bib{wong2014pdo}{book}{
			author={Wong, Man~Wah},
			title={An introduction to pseudo-differential operators},
			edition={3},
			publisher={World Scientific Publishing Co Inc},
			address={Hackensack},
			date={2014},
			volume={6},
		}
		
		\bib{Yuan1}{article}{
			author={Yuan, G.},
			title={Determination of two kinds of sources simultaneously for a
				stochastic wave equation},
			date={2015},
			journal={Inverse Prob.},
			volume={31},
			pages={085003},
		}
		
		\bib{Zhang2015}{article}{
			author={Zhang, D.},
			author={Guo, Y.},
			title={Fourier method for solving the multi-frequency inverse source
				problem for the helmholtz equation},
			date={2015},
			journal={Inverse Prob.},
			volume={31},
			pages={035007},
		}
		
	\end{biblist}
\end{bibdiv}

}

\end{document}